\newtheorem{theorem}{Theorem}[section]
\newtheorem{lemma}[theorem]{Lemma}
\newtheorem{remark}[theorem]{Remark}
\numberwithin{equation}{section}
\title[Diffusive-reactive systems]{A numerical framework for 
diffusion-controlled bimolecular-reactive systems to enforce 
maximum principles and the non-negative constraint}
\author{K.~B.~Nakshatrala} \author{M.~K.~Mudunuru} \author{A.~J.~Valocchi} 
\address{Correspondence to: Dr. Kalyana Babu Nakshatrala, 
  Department of Civil and Environmental Engineering, 
  Engineering Building \#1, Room \#135, University of 
  Houston, Houston, Texas 77204-4003, USA. 
  TEL:+1-713-743-4418} \email{knakshatrala@uh.edu} 
\address{Maruti Kumar Mudunuru, Graduate Student, Department 
  of Civil and Environmental Engineering, University of Houston, 
  Houston, Texas 77204-4003, USA.} 
\address{Dr.~Albert Valocchi, Department of Civil and 
Environmental Engineering, Newmark Civil Engineering 
  Laboratory, University of Illinois at Urbana-Champaign, 
  Urbana, Illinois 61801, USA.} 
\date{\today}
\begin{document}

\begin{abstract}
We present a novel computational framework for diffusive-reactive 
systems that satisfies the non-negative constraint and maximum 
principles on general computational grids. The governing equations 
for the concentration of reactants and product are written in terms 
of tensorial diffusion-reaction equations. 
We restrict our studies to fast irreversible bimolecular reactions. 
If one assumes that the reaction is diffusion-limited and all 
chemical species have the same diffusion coefficient, one 
can employ a linear transformation to rewrite the governing 
equations in terms of \emph{invariants}, which are unaffected 
by the reaction. This results in two uncoupled tensorial diffusion 
equations in terms of these invariants, which are solved using a 
novel non-negative solver for tensorial diffusion-type equations. 
The concentrations of the reactants and the product are then 
calculated from invariants using algebraic manipulations. The 
novel aspect of the proposed computational framework is that 
it will always produce physically meaningful non-negative 
values for the concentrations of all chemical species. 
Several representative numerical examples are presented to 
illustrate the robustness, convergence, and the numerical 
performance of the proposed computational framework. We 
will also compare the proposed framework with other popular 
formulations. In particular, we will show that the Galerkin 
formulation (which is the standard single-field formulation) 
does not produce reliable solutions, and the reason can be 
attributed to the fact that the single-field formulation does 
not guarantee non-negative solutions. We will also show 
that the clipping procedure (which produces non-negative 
solutions but is considered as a variational crime) does not 
give accurate results when compared with the proposed 
computational framework. 
\end{abstract}
\keywords{fast bimolecular reactions; theory of interacting continua; 
anisotropic diffusion; rigid porous media; non-negative constraint; 
maximum principles; convex programming; semilinear partial 
differential equations}
  
\maketitle


\section{INTRODUCTION AND MOTIVATION}
\label{Sec:S1_Introduction}  
Mixing of chemical species across plume boundaries has a major 
influence on the fate of reactive pollutants in subsurface flows. 
In many practical cases, the intrinsic rate of reaction is 
fast compared with other relevant time scales and hence the 
reaction may be assumed instantaneous (e.g., see references 
\cite{McCarty_Criddle,Dentz_Borgne_Englert_Bijeljic_JHC_2011_v120_p1}).
Mixing is commonly modeled as an anisotropic Fickian diffusion 
process, with the effective diffusion coefficient aligned with 
the flow velocity (termed the longitudinal hydrodynamic dispersion 
coefficient) being much larger than the transverse components. 
Moreover, the small-scale spatial variability of permeability 
in real aquifers leads to highly heterogeneous velocity fields, 
which means that the principal directions of the diffusion tensor 
will vary in space and will not be aligned with the numerical 
grid. It has also been demonstrated that accurate treatment 
of dispersive mixing is crucial for computing the large-time 
spatial extent of the contaminant plume, which is a very 
important measure of the contaminant risk and thus of great 
interest to government regulators and the general public 
(e.g., see Reference \cite{Cirpka_Valocchi_AWR_2007_v30}).
It is well-known that heterogeneity and anisotropy lead to 
irregular plume boundaries, which enhance mixing-controlled 
reactions through increasing the interfacial area of the 
plume. It is, therefore, crucial to capture heterogeneity 
and anisotropy in order to properly model reactive transport 
in hydrogeological systems. 

One can capture heterogeneity computationally through adequate 
mesh refinement or by employing multiscale methods (for example, 
see references \cite{Arbogast_LNP_2000_v552_p35,
Chen_Hou_MathComput_2003_v72_p541,Aarnes_MMS_2004_v2_p421,
Aarnes_Kippe_Lie_AWR_2005_v28_p257}). Although simulating flow 
and transport in highly heterogeneous porous media is still an 
active area of research, we shall not address this aspect in 
this paper. We shall employ meshes that are fine enough to be 
able to resolve heterogeneity. This paper focuses on resolving 
anisotropy to be able to accurately predict the fate of chemical 
species. We shall model the spatial and temporal variation of 
chemical species through diffusion-reaction equations. 

Diffusive-reactive equations arise naturally in modeling biological 
\cite{Murray,Farkas}, chemical \cite{Epstein_Pojman,Gray_Scott}, and 
physical \cite{Walgraef} systems. The areas of application of 
diffusion-reaction systems range from contaminant transport 
\cite{Pinder_Celia}, semiconductors \cite{Scholl}, combustion 
theory \cite{Williams} to biological population dynamics 
\cite{Fisher}. It is well-known that these types of equations 
can exhibit complex and well-ordered structures/patterns 
\cite{Cross_Hohenberg_RMP_1993_v65_p851,Walgraef}. A lot of 
effort has also been put into the mathematical analysis of 
diffusive-reactive systems. Many qualitative mathematical 
properties (e.g., comparison principles, maximum principles, 
the non-negative constraint) have been obtained for these 
systems \cite{Pao,Evans_PDE,Gilbarg_Trudinger}. A detailed 
discussion on mathematical aspects of diffusive-reactive 
systems is beyond the scope of this paper and is not central 
to the present study. 

\subsection{Fast bimolecular reactions}
\emph{The main aim of this paper is to present a robust 
  computation framework to obtain physically meaningful 
  numerical solutions for diffusive-reactive systems. We 
  shall restrict to bimolecular fast reactions for which the 
  rate of reaction is controlled by diffusion of the two 
  chemical species.}
Fast irreversible bimolecular diffusion-reaction equations 
come under the class of diffusion-controlled reactions 
\cite{Rice_Stephen,Kotomin_Kuzovkov}. Examples of such 
systems include ionic reactions occurring in aqueous 
solutions  such as acid-base reactions 
\cite{2001_Cohen_Huppert_Agmon_JPCA_v105_p7165_7173,
1997_Pines_Magnes_Lang_Fleming_v281_p413_420}, polymer 
chain growth kinetics \cite{1962_Benson_North_JACS_v84_p935_940}, 
catalytic reactions \cite{1969_Barnett_Jencks_JACS_v91_p6758_6765} 
and enzymatic reactions \cite{1958_Alberty_Hammes_JPC_v62_p154_159}. 
In such systems the formation of the product is much faster than the 
diffusion of the reactants. In this paper we are only concerned about 
homogeneous reactions (i.e., the reactants are all in the same phase). 
Modeling diffusive-reactive systems in which reactions are heterogeneous 
(i.e., the reactants are in different phases) by incorporating the 
surface effects of the reactants (for example, see references 
\cite{1973_Wilemski_Fixman_TJCP_v58_p4009_4019,
1982_Keizer_JPC_v86_p5052_5067,1974_Chen_Ping_SS_v17_p664_680,
1982_Chou_Zhou_JACS_v104_p1409_1413,1986_Cukier_JSP_v42_p69_82,
1983_Calef_Deutch_ARPC_v34_p493_524,2007_Goldstein_Levine_Torney_SIAMJAM_v67_p1147_1165}) 
is beyond the scope of the present paper but will be considered in our future works. 

Some main challenges in solving a system of diffusion-reaction 
equations are as follows:
\begin{enumerate}[(a)]
\item \emph{Anisotropy:} Developing robust computational 
  frameworks for highly anisotropic diffusive-reactive systems 
  is certainly gaining prominence. However, caution needs to be 
  exercised in selecting numerical formulations to avoid negative 
  values for the concentration of the chemical species. Many 
  popular numerical schemes such as the standard single-field 
  formulation \cite{Hughes}, the lowest order Raviart-Thomas 
  formulation \cite{Raviart_Thomas_MAFEM_1977_p292}, and the 
  variational multiscale mixed formulation 
  \cite{Masud_Hughes_CMAME_2002_v191_p4341,
    Nakshatrala_Turner_Hjelmstad_Masud_CMAME_2006_v195_p4036} 
  give unphysical negative values for the concentration even 
  for pure diffusion equations 
  \cite{Nakshatrala_Valocchi_JCP_2009_v228_p6726}. 
  Furthermore, mesh refinement (either $h$-refinement 
  \cite{Nagarajan_Nakshatrala_IJNMF_2011_v67_p820} or $p$-refinement 
  \cite{Payette_Nakshatrala_Reddy_IJNME_2012}) will not 
  overcome the problem of negative values for the concentration. 
  Keeping in mind about these concerns and developing a robust 
  framework for diffusive-reactive systems is a challenging task.
\item \emph{Nonlinearity:} The equations governing 
  for these systems are coupled and nonlinear (see equations 
  \eqref{Eqn:Mixture_for_A}--\eqref{Eqn:NN_species_IC}, which 
  are presented in a subsequent section). The solutions to 
  these diffusive-reactive systems can exhibit steep gradients 
  \cite{Logan_NonlinearPDEs}, and a robust numerical solution 
  procedure should be able to handle such features.
\item \emph{Scale dependence:} These types of systems can 
  exhibit multiple spatial and temporal scales. For example, 
  the diffusion process can be much slower than the chemical 
  reactions. Therefore, the numerical techniques should be 
  able to resolve these multiple scales. Compared to fast 
  reactions, different solution strategies are required 
  for moderate and slow reactions, which are typically 
  easier to solve due to smaller gradients. Construction of 
  adaptive numerical techniques that take the advantage of 
  specific reaction kinetics and simultaneously satisfying 
  the underlying mathematical properties is still in its 
  infancy. 
\item \emph{Bifurcations, physical instabilities and pattern 
  formations:} These systems are capable of exhibiting physical 
  instabilities, and even chaos \cite{Pao}.
\end{enumerate}

We shall overcome the first challenge by employing a novel non-negative 
solver that ensures physically meaningful non-negative values for the 
concentration of chemical species. We employ a transformation of variables, 
which will overcome the second and third challenges. We solve problems that 
do not exhibit physical instabilities and chaos. Although considerable 
progress has been made in developing numerical solutions of diffusive-reactive 
systems \cite{Hundsdorfer_Verwer,Mei}, none of these studies addressed 
the difficulties in obtaining non-negative solutions especially under 
strong anisotropy, which is the main focus of this paper. 
The following systematic approach has been employed to achieve the 
desired goal. The reactions are assumed to be fast and bimolecular. 
The concentration of reactants and product are governed by tensorial 
diffusion-reaction equations, and by an appropriate stoichiometric 
relationship. The three coupled tensorial diffusion-reaction equations 
are rewritten in terms of invariants, which are unaffected by the 
reaction. This procedure results in two uncoupled tensorial diffusion 
equations in terms of the invariants. A robust non-negative solver is 
employed to solve these two uncoupled tensorial diffusion equations 
following the ideas outlined in references 
\cite{Nakshatrala_Valocchi_JCP_2009_v228_p6726,
Nagarajan_Nakshatrala_IJNMF_2011_v67_p820}.
The concentrations of the reactants and product are then 
calculated using simple algebraic relations given by the 
transformation that is employed.

\subsection{An outline of the paper}
The remainder of the paper is organized as follows. In Section 
\ref{Sec:S2_Bimolecular_GE}, we first present the governing 
equations for a diffusive-reactive system in a general setting. 
We later restrict the study to fast bimolecular reactions, and 
deduce the corresponding governing equations. In Section 
\ref{Sec:S3_Bimolecular_FEM}, we present a novel non-negative 
computational framework for solving diffusive-reactive systems. 
Section \ref{Sec:S4_NN_Theoretical} provides a theoretical 
discussion on the optimization-based solver for enforcing 
maximum principles. Representative numerical results are 
presented in Section \ref{Sec:S5_NN_NR}, and conclusions 
are drawn in Section \ref{Sec:S6_NN_Closure}. 

The standard symbolic notation is adopted in this paper. We shall 
denote scalars by lowercase English alphabet or lowercase Greek 
alphabet (e.g., concentration $c$ and density $\rho$). We shall 
make a distinction between vectors in the continuum and finite 
element settings. Similarly, a distinction is made between 
second-order tensors in the continuum setting versus matrices 
in the context of the finite element method. The continuum 
vectors are denoted by lower case boldface normal letters, 
and the second-order tensors will be denoted using upper 
case boldface normal letters (e.g., vector $\mathbf{x}$ 
and second-order tensor $\mathbf{D}$). In the finite element 
context, we shall denote the vectors using lower case boldface 
italic letters, and the matrices are denoted using upper case 
boldface italic letters (e.g., vector $\boldsymbol{v}$ and 
matrix $\boldsymbol{K}$). It should be noted that repeated 
indices do not imply summation. (That is, Einstein's summation 
convention is not employed in this paper.) Other notational 
conventions adopted in this paper are introduced as needed.

\section{GOVERNING EQUATIONS: DIFFUSIVE-REACTIVE SYSTEMS}
\label{Sec:S2_Bimolecular_GE}
We now present governing equations of a simple diffusive-reactive 
system in a rigid porous medium. Let $\Omega \subset \mathbb{R}^{nd}$ 
be an open bounded domain, where ``$nd$" denotes the number of spatial 
dimensions. The boundary is denoted by $\partial \Omega$, which is 
assumed to be piecewise smooth. Mathematically, $\partial \Omega := 
\bar{\Omega} - \Omega$, where $\bar{\Omega}$ is the set closure of 
$\Omega$. A spatial point in $\bar{\Omega}$ is denoted by $\mathbf{x}$. 
The divergence and gradient operators with respect to $\mathbf{x}$ are, 
respectively, denoted by $\mathrm{div}[\cdot]$ and $\mathrm{grad}[\cdot]$. 
The unit outward normal to the boundary is denoted by $\mathbf{n}
(\mathbf{x})$. Let $t \in \, ]0, \mathcal{I}[$ denote the time, 
where $\mathcal{I}$ is the length of time of interest. 
We shall write the governing equations for the individual chemical 
species and for the mixture on the whole using the mathematical 
framework provided by the theory of interacting continua 
\cite{Bowen}. 

We shall assume that there are $n$ chemical species, which 
include both reactants and product(s). We shall denote the 
molar mass of the $i$-th species by $\zeta_i$. The mass 
concentration of the $i$-th species at the spatial position 
$\mathbf{x}$ and time $t$ is denoted by $\varphi_i(\mathbf{x},
t)$. The molar concentration of the $i$-th species is denoted 
by $c_i(\mathbf{x},t)$ and is defined as follows:
\begin{align}
  c_i(\mathbf{x},t) := \frac{\varphi_i(\mathbf{x},t)}{\zeta_i}
\end{align}
For each chemical species $(i = 1, \cdots, n)$, the boundary 
is divided into two complementary parts: $\Gamma^{\mathrm{D}}_{i}$ 
and $\Gamma^{\mathrm{N}}_{i}$. $\Gamma^{\mathrm{D}}_{i}$ is that part 
of the boundary on which concentration for the $i$-th species 
is prescribed, and $\Gamma^{\mathrm{N}}_{i}$ is the part of the 
boundary on which the flux for the $i$-th chemical species is 
prescribed. For well-posedness and uniqueness of the solution, 
we require $\Gamma^{\mathrm{D}}_i \cap \Gamma^{\mathrm{N}}_i = 
\emptyset$, $\Gamma^{\mathrm{D}}_i \cup \Gamma^{\mathrm{N}}_i = 
\partial \Omega$, and $\mathrm{meas}(\Gamma^{\mathrm{D}}_i) 
> 0$. 
The governing equations for the fate of the $i$-th 
chemical species can be written as follows:
\begin{subequations}
  \label{Eqn:Mixture_for_A_B_C}
  \begin{align}
    \label{Eqn:Mixture_for_A}
    &\frac{\partial \varphi_i}{\partial t} - \mathrm{div}[\mathbf{D}
    (\mathbf{x}) \, \mathrm{grad}[\varphi_i]] =  m_i(\mathbf{x},t) + 
    g_i(\mathbf{x},\varphi_1, \cdots, \varphi_n, t) \quad \mathrm{in} 
    \; \Omega \times ]0, \mathcal{I}[ \\
    \label{Eqn:Mixture_for_Dirchlet}
    &\varphi_i(\mathbf{x},t) = \varphi^{\mathrm{p}}_i(\mathbf{x},t) 
    \quad \mathrm{on} \; \Gamma^{\mathrm{D}}_{i} \times 
    ]0, \mathcal{I}[ \\
    \label{Eqn:Mixture_for_Neumann}
    &\mathbf{n}(\mathbf{x}) \cdot \mathbf{D} (\mathbf{x}) \,  
    \mathrm{grad}[\varphi_i] = \vartheta^{\mathrm{p}}_i(\mathbf{x},t) 
    \quad \mathrm{on} \; \Gamma^{\mathrm{N}}_{i} \times 
    ]0, \mathcal{I}[ \\
    \label{Eqn:NN_species_IC}
    &\varphi_i(\mathbf{x},t=0) = \varphi^{0}_i(\mathbf{x}) 
    \quad \mathrm{in} \; \Omega 
  \end{align}
\end{subequations}
where $\mathbf{D}(\mathbf{x})$ is the diffusivity tensor, 
$\varphi^{\mathrm{p}}_i(\mathbf{x},t)$ is the prescribed mass 
concentration on the boundary, $\vartheta^{\mathrm{p}}_i(\mathbf{x},
t)$ is the prescribed mass concentration flux on the boundary, 
$\varphi^{0}_i(\mathbf{x})$ is the (prescribed) initial condition, 
$g_i(\mathbf{x}, \varphi_1, \cdots, \varphi_n,t)$ is the rate of 
production/depletion of the $i$-th species due to chemical 
reactions, and $m_i(\mathbf{x},t)$ is the prescribed external 
volumetric mass supply for the $i$-th species. 
Equations \eqref{Eqn:Mixture_for_A}--\eqref{Eqn:NN_species_IC} are, 
respectively, the balance of mass, the Dirichlet boundary condition, 
the Neumann boundary condition, and the initial condition for the 
$i$-th chemical species. The governing equation for the balance 
of mass for the mixture on the whole takes the following form: 
\begin{align}
  \sum_{i=1}^{n} g_i(\mathbf{x},\varphi_1,\cdots, 
  \varphi_n,t) = 0 \quad \forall \mathbf{x} \in 
  \Omega, \; \forall t \in ]0, \mathcal{I}[
\end{align}
In Engineering and Applied Sciences, the above system of equations 
\eqref{Eqn:Mixture_for_A}--\eqref{Eqn:NN_species_IC} is commonly 
referred to as time-dependent or non-stationary diffusion-reaction 
equations. In the theory of partial differential equations, the 
above system of equations is referred to as coupled \emph{semilinear 
  parabolic} partial differential equations \cite{Pao}. These types 
of equations are capable of exhibiting many interesting and complex 
features like instabilities, bifurcations, and even chaos (for 
example, see Pao \cite{Pao}). However, in this paper we shall 
focus only on three interrelated mathematical properties that 
these types of equations satisfy: maximum principles, comparison 
principles, and the non-negative constraint.

For completeness, we shall document the assumptions 
in obtaining the governing equations 
\eqref{Eqn:Mixture_for_A}--\eqref{Eqn:NN_species_IC} 
under the mathematical framework offered by the theory 
of interacting continua. The main assumptions are as 
follows:
\begin{enumerate}[(a)]
\item The porous medium in which the reaction takes 
  place is assumed to be rigid. 
\item The flow aspects in the porous media are 
neglected. Hence, the advection velocity of each 
  chemical species in the mixture is zero.
\item The chemical species do not take partial stresses.
\item The diffusion process is based on the Fickian model.
\item The diffusivity tensor $\mathbf{D}(\mathbf{x})$ is 
  the \emph{same} for all the chemical species, which is 
  the case with many diffusive-reactive systems. (However, 
  it should be noted that in some chemical and biological 
  diffusive-reactive systems the diffusivity tensor 
  $\mathbf{D}(\mathbf{x})$ will be different for 
  different species.)  
\end{enumerate} 
One can obtain a hierarchy of more complicated models by 
relaxing/generalizing one or more of the aforementioned 
assumptions, which will be considered in our future works. 
We shall now restrict our study to bimolecular reactions. 

\subsection{Bimolecular reactions} 
Bimolecular reactions are reactions involving two reactant 
molecules. To this end, consider two chemical species $A$ 
and $B$ that react irreversibly to give the product $C$ 
according to the following stoichiometric relationship:
\begin{align}
  \label{Eqn:Bimolecular_fast_reaction}
    n_{A} \, A \, + \, n_{B} \, B \rightarrow n_{C} \, C
\end{align}
where $n_A$, $n_B$ and $n_C$ are (positive) stoichiometric 
coefficients. The rate of the chemical reaction is denoted 
by $r$, which (in general) can be a nonlinear function of 
the concentrations of the chemical species involved in the 
reaction. For a bimolecular reaction, one can write the 
rate of the reaction at the spatial point $\mathbf{x}$ 
and time $t$ as follows: 
\begin{align}
  \label{Eqn:Reaction_Rate_Law}
    r(\mathbf{x},c_A(\mathbf{x},t),c_B(\mathbf{x},t),
    c_C(\mathbf{x},t),t) = -\frac{g_A}{n_A \zeta_A} = 
    -\frac{g_B}{n_B \zeta_B} = +\frac{g_C}{n_C \zeta_C}
\end{align}

For bimolecular reactions, the governing equations 
\eqref{Eqn:Mixture_for_A}--\eqref{Eqn:NN_species_IC} 
can be rewritten as follows: 
\begin{subequations}
  \label{Eqn:DRs_for_A_B_C}
  \begin{align}
    \label{Eqn:DRs_for_A}
    &\frac{\partial c_A}{\partial t} - \mathrm{div}[\mathbf{D}
    (\mathbf{x}) \, \mathrm{grad}[c_A]] = f_A(\mathbf{x},t) - 
    n_{\small{A}} \, r \quad \mathrm{in} \; \Omega \times ]0, 
    \mathcal{I}[ \\
    \label{Eqn:DRs_for_B} 
    &\frac{\partial c_B}{\partial t} - \mathrm{div}[\mathbf{D}
    (\mathbf{x}) \, \mathrm{grad}[c_B]] = f_B(\mathbf{x},t) - 
    n_{\small{B}} \, r \quad \mathrm{in} \; \Omega \times ]0, 
    \mathcal{I}[ \\
    \label{Eqn:DRs_for_C} 
    &\frac{\partial c_C}{\partial t} - \mathrm{div}[\mathbf{D}
    (\mathbf{x}) \, \mathrm{grad}[c_C]] = f_C(\mathbf{x},t) + 
    n_{\small{C}} \, r \quad \mathrm{in} \; \Omega \times ]0, 
    \mathcal{I}[ \\
    \label{Eqn:DRs_for_Dirchlet}
    &c_i(\mathbf{x},t) = c^{\mathrm{p}}_i(\mathbf{x},t) \quad 
    \mathrm{on} \; \Gamma^{\mathrm{D}}_{i} \times ]0, 
    \mathcal{I}[ \quad (i = A, \, B, \; C) \\
    \label{Eqn:DRs_for_Neumann}
    &\mathbf{n}(\mathbf{x}) \cdot \mathbf{D} (\mathbf{x}) \,  
    \mathrm{grad}[c_i] = h^{\mathrm{p}}_i(\mathbf{x},t) \quad \mathrm{on} 
    \; \Gamma^{\mathrm{N}}_{i} \times ]0, \mathcal{I}[ 
    \quad (i = A, \, B, \; C) \\
    \label{Eqn:DRs_for_IC}
    &c_i(\mathbf{x},t=0) = c^{0}_i(\mathbf{x}) \quad 
    \mathrm{in} \; \Omega \quad (i = A, \, B, \; C)
  \end{align}
\end{subequations}
where the prescribed molar supply and the prescribed molar 
initial condition for the $i$-th species are, respectively, 
defined as follows:
\begin{align}
  f_i(\mathbf{x},t) &= \frac{m_i(\mathbf{x},t)}{\zeta_i} \\
  c_i^{0}(\mathbf{x}) &= \frac{\varphi^{0}_i(\mathbf{x})}{\zeta_i} 
\end{align}
and the prescribed molar concentration and the prescribed molar 
concentration flux for the $i$-th species on the boundary are, 
respectively, defined as follows: 
\begin{align}
  c^{\mathrm{p}}_i(\mathbf{x},t) &= 
  \frac{\varphi^{\mathrm{p}}_i(\mathbf{x},t)}{\zeta_i} \\
  h^{\mathrm{p}}_i(\mathbf{x},t) &= 
  \frac{\vartheta^{\mathrm{p}}_i(\mathbf{x},t)}{\zeta_i}
\end{align}

Before we present the proposed computational 
framework, few remarks are in order.
\begin{remark}
  A bimolecular equation, in its most general form, 
  takes the following form:
  \begin{align}
    \label{Eqn:Bimolecular_general_bimolecular}
    n_A A + n_B B \rightarrow n_C C + n_D D + \cdots
  \end{align}
  We considered bimolecular reactions of the form given 
  in equation \eqref{Eqn:Bimolecular_fast_reaction} just 
  to make the presentation simpler. However, it should 
  be emphasized that with a simple and straightforward 
  extension the framework can handle bimolecular 
  reactions of the form given by equation 
  \eqref{Eqn:Bimolecular_general_bimolecular}.
\end{remark}

\begin{remark}
  One can handle stoichiometry using a stoichiometric matrix 
  (for example, see references \cite{Bowen,Erdi_Toth}). This 
  approach will be convenient and appropriate for complicated 
  chemical reactions that involve multiple stages, multiple 
  chemical species, and intermediate complexes. Herein, 
  we shall not take such an approach, as the reaction is 
  relatively simpler.
\end{remark} 

\begin{remark}
  The mathematical model presented in this section, 
  and the computational framework proposed in this 
  paper can be used to obtain numerical solutions and 
  perform predictive simulations for various practical 
  problems proposed in the literature. 
  To name a few: transverse mixing-limited chemical reactions 
  in groundwater and aquifers \cite{2008_Willingham_etal_EST_v42_p3185_p3193,
    1997_Davies_etal_BJ_v70_p1006_p1016,
    2010_Willingham_etal_v33_p525_p535}, and mixing-controlled 
  bioreactive transport in heterogeneous porous media for 
  engineered bioremediation and contaminant degradation 
  scenarios \cite{Cirpka_Valocchi_AWR_2007_v30,
    2009_Cirpka_Valocchi_AWR_v32_p298_p301,
    2004_Ham_etal_AWR_v27_p803_p813,
    2005_Chu_etal_AWR_v41,
    1986_Borden_Bedient_AWR_v22_p_1973_p1982,
    1986_Borden_Bedient_AWR_v22_p_1983_p1990} 
    Other practically important examples include acid-base 
    reactions \cite{2006_Arratia_Gollub_v96}, atmospheric 
    chemical transport \cite{1997_Thuburn_Tan_JPR_v102_p13037_p13049}, 
    chemistry of marine boundary layer \cite{2006_Glasow_ACP_v6_p3571_p3581}, 
    drinking water treatment \cite{2003_Huber_EST_v37_p1016_p1024}, 
    and reaction enhancement through chaotic flows 
    \cite{2009_Tsang_PRE_v80,2004_Crimaldi_Browning_JMS_v49_p3_p18,
      2004_Crimaldi_Cadwell_Weiss_PoF_v20}.
\end{remark}

\section{PROPOSED COMPUTATIONAL FRAMEWORK}
\label{Sec:S3_Bimolecular_FEM}
We now present a robust numerical framework to solve 
bimolecular diffusive-reactive systems given by equations 
\eqref{Eqn:DRs_for_A}--\eqref{Eqn:DRs_for_IC} that will 
produce physically meaningful non-negative solutions for 
the concentration of chemical species on general computational 
grids. As mentioned earlier, we shall restrict our studies to 
\emph{fast reactions}. By fast reactions we mean that the rate 
of the chemical reaction is much faster than the rate of the 
diffusion process. 

\subsection{A non-negative invariant set, and uncoupled equations} 
Using an appropriate linear transformation, one can obtain two 
independent invariants, which are unaffected by the chemical 
reaction. As we shall see later that these invariants will 
be governed by uncoupled linear diffusion equations. For the 
bimolecular diffusive-reactive system \eqref{Eqn:DRs_for_A_B_C}, 
one can construct three different sets of independent invariants. 
Herein, we shall employ the set of independent invariants that 
inherits the non-negative property, and the corresponding linear 
transformation can be written as follows: 
\begin{subequations}
  \label{Eqn:Definitions_of_F_G_Set2}
  \begin{align}
    \label{Eqn:Definitions_of_F}
    c_F &:= c_A + \left( \frac{n_A}{n_C} \right) c_C \\ 
    \label{Eqn:Definitions_of_G}
    c_G &:= c_B + \left( \frac{n_B}{n_C} \right) c_C 
  \end{align}
\end{subequations}

\begin{remark}
  Let us denote the other two sets of independent 
  invariants by $(U, V)$ and $(M, N)$, which can be 
  devised as follows: 
  \begin{subequations}
    \label{Eqn:Definitions_of_F_G_Set1}
    \begin{align}
      \label{Eqn:Definitions_U_V}
      c_{U} &:= c_A - \left( \frac{n_A}{n_B} \right) c_B, \; 
      c_{V}  := c_A + \left( \frac{n_A}{n_C} \right) c_C \\
      \label{Eqn:Definitions_M_N}
      c_{M} &:= c_B - \left( \frac{n_B}{n_A} \right) c_A, \; 
      c_{N}  := c_B + \left( \frac{n_B}{n_C} \right) c_C
    \end{align}
  \end{subequations}
  As one can see from the above expressions, $c_U$ and $c_M$ need 
  not be always non-negative, which is not unphysical as $U$ and 
  $M$ do not refer to any chemical species. They are just convenient 
  mathematical quantities, and the possibility of negative values 
  for these quantities is just a consequence of their mathematical 
  definitions. We shall not use the above invariants as they do not 
  possess the non-negative property, and hence are not appropriate 
  to be used in a non-negative solver for diffusion-type equations.  
\end{remark}

In the remainder of this paper, we shall assume that $\Gamma^{\mathrm{D}}_{A} 
= \Gamma^{\mathrm{D}}_{B} = \Gamma^{\mathrm{D}}_{C}$, and shall denote them by 
$\Gamma^{\mathrm{D}}$ for notational convenience. Similarly, we shall assume 
that $\Gamma^{\mathrm{N}}_{A} = \Gamma^{\mathrm{N}}_{B} = \Gamma^{\mathrm{N}}_{C}$, 
and shall denote them by $\Gamma^{\mathrm{N}}$. These assumptions will 
facilitate application of the linear transformation to the prescribed boundary 
conditions $c^{\mathrm{p}}_i (\mathbf{x},t)$ and $h^{\mathrm{p}}_i(\mathbf{x},t)$ 
($i = A, \, B, \, C$).
Using straightforward manipulations on equations 
\eqref{Eqn:DRs_for_A}--\eqref{Eqn:DRs_for_IC} based on the 
non-negative invariant set \eqref{Eqn:Definitions_of_F_G_Set2} 
and appealing to the above assumptions, one can show that the 
governing equations for the invariant $F$ can be written as 
follows:
\begin{subequations}
  \begin{align}
    \label{Eqn:Diffusion_for_F}
    &\frac{\partial c_F}{\partial t} - \mathrm{div}[\mathbf{D}
    (\mathbf{x}) \, \mathrm{grad}[c_F]] = f_F(\mathbf{x},t) \quad 
    \mathrm{in} \; \Omega \times ]0, \mathcal{I}[ \\
    \label{Eqn:Diffusion_for_Dirchlet_F}
    &c_F(\mathbf{x},t) = c_F^{\mathrm{p}}(\mathbf{x},t) := 
    c^{\mathrm{p}}_A(\mathbf{x},t) + \left( \frac{n_A}{n_C} 
    \right) c^{\mathrm{p}}_C(\mathbf{x},t) \quad \mathrm{on} 
    \; \Gamma^{\mathrm{D}} \times ]0, \mathcal{I}[ \\
    \label{Eqn:Diffusion_for_Neumann_F}
    &\mathbf{n}(\mathbf{x}) \cdot \mathbf{D} (\mathbf{x}) \, 
    \mathrm{grad}[c_F] =  h^{\mathrm{p}}_F(\mathbf{x},t) := 
    h^{\mathrm{p}}_A(\mathbf{x},t) + \left( \frac{n_A}{n_C} 
    \right) h^{\mathrm{p}}_C(\mathbf{x},t) \quad \mathrm{on} 
    \; \Gamma^{\mathrm{N}} \times ]0, \mathcal{I}[ \\
    \label{Eqn:Diffusion_for_IC_F}
    &c_F(\mathbf{x},t=0) = c^{0}_F(\mathbf{x}) := 
    c^{0}_A(\mathbf{x}) + \left( \frac{n_A}{n_C} \right) 
    c^{0}_C(\mathbf{x}) \quad \mathrm{in} \; \Omega
  \end{align}
\end{subequations} 
Similarly, the governing equations for the invariant 
$G$ take the following form: 
\begin{subequations}
  \begin{align}
    \label{Eqn:Diffusion_for_G} 
    &\frac{\partial c_G}{\partial t} - \mathrm{div}[\mathbf{D}
    (\mathbf{x}) \, \mathrm{grad}[c_G]] = f_G(\mathbf{x},t) \quad 
    \mathrm{in} \; \Omega \times ]0, \mathcal{I}[ \\
    \label{Eqn:Diffusion_for_Dirchlet_G}
    &c_G(\mathbf{x},t) = c^{\mathrm{p}}_G(\mathbf{x},t) := c^{\mathrm{p}}_B
    (\mathbf{x},t) + \left( \frac{n_B}{n_C} \right) c^{\mathrm{p}}_C
    (\mathbf{x},t) \quad \mathrm{on} \; \Gamma^{\mathrm{D}} \times 
      ]0, \mathcal{I}[ \\
    \label{Eqn:Diffusion_for_Neumann_G}
    &\mathbf{n}(\mathbf{x}) \cdot \mathbf{D} (\mathbf{x}) \, 
    \mathrm{grad}[c_G] = h^{\mathrm{p}}_G(\mathbf{x},t) := h^{\mathrm{p}}_B(\mathbf{x},t) + 
    \left( \frac{n_B}{n_C} \right) h^{\mathrm{p}}_C(\mathbf{x},t) 
    \quad \mathrm{on} \; \Gamma^{\mathrm{N}} \times ]0, \mathcal{I}[ \\
    \label{Eqn:Diffusion_for_IC_G}
    &c_G(\mathbf{x},t=0) = c^{0}_G(\mathbf{x}) := c^{0}_B
    (\mathbf{x}) + \left( \frac{n_B}{n_C} \right) c^{0}_C(\mathbf{x})
    \quad \mathrm{in} \; \Omega 
  \end{align}
\end{subequations}
In the above equations, the prescribed molar supplies for the 
invariants $F$ and $G$ in terms of the chemical species $A$, 
$B$, and $C$ take the following form:
\begin{subequations}
  \begin{align}
    \label{Eqn:Volumetric_Source_for_F}
    &f_F(\mathbf{x},t) = f_A(\mathbf{x},t) + 
    \left( \frac{n_A}{n_C} \right) f_C(\mathbf{x},t) \\
    \label{Eqn:Volumetric_Source_for_G}
    &f_G(\mathbf{x},t) = f_B(\mathbf{x},t) + 
    \left( \frac{n_B}{n_C} \right) f_C(\mathbf{x},t)
  \end{align}
\end{subequations}

\subsection{Specializing on fast bimolecular reactions}
\label{SubSec:Specializing_fast_bimolecular_reactions}
Recall that by a fast reaction we mean that the time 
scale of the reaction is much faster than the time 
scales associated with the diffusion process of the 
chemical species. 
Hence, for bimolecular reactions that are fast, it is a 
good approximation to assume that the species $A$ 
and $B$ cannot co-exist at any given instance of time at 
the same location $\mathbf{x}$. This assumption will be 
exact in the limit of infinitely fast / instantaneous 
reactions.
Using equations 
\eqref{Eqn:Definitions_of_F}--\eqref{Eqn:Definitions_of_G}, 
one can rewrite the whole problem for fast irreversible 
bimolecular fast reactions in terms of the conserved 
quantities $c_F(\mathbf{x},t)$ and $c_G(\mathbf{x},t)$ 
as follows:
\begin{subequations}
  \label{Eqn:Fast_A_B_C}
  \begin{align}
    \label{Eqn:Fast_A}
    &c_A(\mathbf{x},t) = \mathrm{max} \left[c_F(\mathbf{x},t) 
      - \left(\frac{n_A}{n_B}\right) c_G(\mathbf{x},t), \, 0 
      \right] \\
    \label{Eqn:Fast_B}
    &c_B(\mathbf{x},t) = \left( \frac{n_B}{n_A} \right) \; 
    \mathrm{max}\left[- c_F(\mathbf{x},t) + 
      \left(\frac{n_A}{n_B} \right) c_G(\mathbf{x},t), \, 0 \right] \\
    \label{Eqn:Fast_C}
    &c_C(\mathbf{x},t) = \left( \frac{n_C}{n_A} \right) \; 
    \left(c_F(\mathbf{x},t) - c_A(\mathbf{x},t) \right)
  \end{align}
\end{subequations}
It should be noted that the original coupled diffusive-reactive system 
\eqref{Eqn:DRs_for_A_B_C} is nonlinear and the solution procedure to 
obtain the concentrations of $A$, $B$, and $C$ is \emph{still} nonlinear. 
This is because of the fact that $\mathrm{max}[\cdot,\cdot]$ is a nonlinear 
operator. However, the obvious advantage is that the resulting equations 
are much simpler to solve.

Before we discuss about the non-negative constraint, we shall introduce 
relevant notation. Let $D_T := \Omega \times ]0, \mathcal{I}]$, and $S_T 
:= \partial \Omega \times ]0, \mathcal{I}]$. Let $C^{m}(K)$ denote the 
set of all $m$-times continuously differentiable functions on an open 
set $K$. The set of all continuous functions on the (set) closure of 
$K$ is denoted by $C(\bar{K})$. The set of all functions that are 
$m$-times continuously differentiable in $\mathbf{x} \in \Omega$ 
and $l$-times continuously differentiable in $t \in ]0, \mathcal{I}]$ 
is denoted by $C^{m,l}(D_T)$. From the theory of partial differential 
equations we have the following non-negative lemma for (pure) 
transient diffusion-type equations. 
\begin{lemma}[The non-negative constraint for transient diffusion equation]
  \label{Lemma:Non_negative_Lemma}
  Let $c(\mathbf{x},t) \in C^{2,1}(D_T) \cap C(\bar{D}_T)$ 
  such that 
  \begin{subequations}
    \begin{align}
      &\frac{\partial c}{\partial t} - \mathrm{div}[\mathbf{D}
        (\mathbf{x})\mathrm{grad}[c]] \geq 0 \quad \mathrm{in} 
      \; D_T \\
      &\alpha_0 \mathbf{n}(\mathbf{x}) \cdot \mathbf{D}(\mathbf{x}) 
      \mathrm{grad}[c]  + \beta_0 c(\mathbf{x},t) \geq 0 \quad 
      \mathrm{on} \; S_T \\
    &c(\mathbf{x},t=0) \geq 0 \quad \mathrm{in} \; \Omega
    \end{align}
  \end{subequations}
  where $\alpha_0 \geq 0$, $\beta_0 \geq 0$, and $\alpha_0 + 
  \beta_0 > 0$ on $S_T$. Then $c(\mathbf{x},t) > 0$ in 
  $D_T$ unless it is identically zero. 
\end{lemma}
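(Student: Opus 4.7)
The plan is to establish the result in two stages: first a weak positivity statement $c(\mathbf{x},t)\ge 0$ in $\bar{D}_T$, and then upgrade to strict positivity via a strong maximum principle argument together with the parabolic version of Hopf's lemma to rule out boundary minima.

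For the weak step, I would argue by contradiction. Suppose $c$ attains a negative value somewhere in $\bar{D}_T$. Introduce the auxiliary function $w(\mathbf{x},t):=c(\mathbf{x},t)+\epsilon t$ for a small $\epsilon>0$. Then
\begin{align*}
\frac{\partial w}{\partial t}-\mathrm{div}[\mathbf{D}(\mathbf{x})\mathrm{grad}[w]]\geq \epsilon>0 \quad\text{in } D_T,
\end{align*}
so $w$ cannot achieve an interior minimum, because at such a point one would have $\partial_t w\le 0$ and $-\mathrm{div}[\mathbf{D}\,\mathrm{grad}[w]]\le 0$ (using positive-definiteness of $\mathbf{D}$), contradicting the strict inequality. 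Hence the negative infimum of $w$ must be attained on the parabolic boundary $\{t=0\}\cup S_T$. The initial condition rules out $\{t=0\}$, and the boundary condition $\alpha_0\,\mathbf{n}\cdot\mathbf{D}\,\mathrm{grad}[c]+\beta_0 c\ge 0$ rules out $S_T$: if $\beta_0>0$ and $\alpha_0=0$ the bound is direct, otherwise at a negative lateral minimum the outward normal derivative of $c$ must be non-positive, forcing $\beta_0 c\ge 0$ and hence $c\ge 0$ there. Letting $\epsilon\downarrow 0$ yields $c\ge 0$ throughout $\bar D_T$.

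For the strict positivity, I would invoke Nirenberg's strong maximum principle for the parabolic operator $\partial_t-\mathrm{div}[\mathbf{D}\,\mathrm{grad}[\cdot]]$: if the non-negative supersolution $c$ attains the value $0$ at some interior point $(\mathbf{x}^{*},t^{*})\in D_T$, then $c\equiv 0$ on $\bar\Omega\times[0,t^{*}]$. Combined with the backward-uniqueness/connectedness argument along the time axis, this forces $c\equiv 0$ on all of $D_T$. The only remaining possibility for losing strict positivity without collapsing to zero would be for a zero to first appear on the lateral boundary $S_T$. This is handled by the parabolic Hopf lemma: if $c(\mathbf{x}_0,t_0)=0$ at such a boundary point while $c>0$ in the interior, then $\mathbf{n}(\mathbf{x}_0)\cdot\mathbf{D}(\mathbf{x}_0)\,\mathrm{grad}[c](\mathbf{x}_0,t_0)<0$; plugging into the Robin inequality gives $\alpha_0\,\mathbf{n}\cdot\mathbf{D}\,\mathrm{grad}[c]+\beta_0 c<0$ (using $\alpha_0+\beta_0>0$ to ensure at least one term contributes strictly), contradicting the hypothesis.

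The main obstacle I anticipate is handling the mixed/Robin-type boundary condition cleanly across the regimes $\alpha_0=0$ (pure Dirichlet), $\beta_0=0$ (pure Neumann), and the genuine Robin case, since the Hopf-lemma step requires $\alpha_0>0$ while the weak-minimum argument on $S_T$ is simplest when $\beta_0>0$. I would therefore split into cases according to which of $\alpha_0,\beta_0$ vanishes, using the standing hypothesis $\alpha_0+\beta_0>0$ to ensure that in every case one of the two arguments (direct sign from $\beta_0 c\ge 0$, or Hopf-lemma contradiction from the normal derivative) applies. A secondary technical point is ensuring the regularity $c\in C^{2,1}(D_T)\cap C(\bar D_T)$ is enough to run Hopf's lemma at a lateral boundary point, which requires only an interior-ball condition on $\partial\Omega$; the piecewise-smoothness assumed at the start of Section 2 suffices.
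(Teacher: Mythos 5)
The paper offers no proof of this lemma---its ``proof'' is a citation to McOwen, Evans, and Pao---so the only meaningful comparison is with the standard textbook argument found there, which is essentially what you reproduce: the $\epsilon t$ perturbation plus the weak parabolic minimum principle for non-negativity, then the strong minimum principle and the parabolic Hopf lemma for strict positivity. Your weak step is sound, with two caveats you partly anticipate. First, the Robin condition involves the \emph{co-normal} derivative $\mathbf{n}\cdot\mathbf{D}\,\mathrm{grad}[c]$, not the normal derivative; at a lateral minimum the tangential gradient vanishes, so $\mathrm{grad}[w]=(\mathbf{n}\cdot\mathrm{grad}[w])\,\mathbf{n}$ and hence $\mathbf{n}\cdot\mathbf{D}\,\mathrm{grad}[w]=(\mathbf{n}\cdot\mathrm{grad}[w])(\mathbf{n}\cdot\mathbf{D}\mathbf{n})\le 0$ by positive-definiteness of $\mathbf{D}$, and the deduction $\beta_0 c\ge 0$ survives. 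Second, when $\beta_0=0$ that deduction is vacuous and you must already invoke the Hopf lemma in the weak step to get the strict sign $\mathbf{n}\cdot\mathbf{D}\,\mathrm{grad}[w]<0$ and contradict $\alpha_0\,\mathbf{n}\cdot\mathbf{D}\,\mathrm{grad}[c]\ge 0$; you flag this, so it is an acknowledged rather than hidden gap. (Be aware, though, that Hopf needs an interior-ball condition, which a merely piecewise-smooth $\partial\Omega$ fails to provide at corners; the cited references patch this with barrier constructions.)

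The one step that genuinely fails as written is the claim that an interior zero ``forces $c\equiv 0$ on all of $D_T$.'' The strong parabolic minimum principle propagates a zero of a non-negative supersolution only \emph{backward} in time and throughout $\Omega$, giving $c\equiv 0$ on $\Omega\times(0,t^{*}]$; backward uniqueness does not help, since it too only controls earlier times, and a supersolution can perfectly well become positive after $t^{*}$ (take zero initial and boundary data with a source that switches on at $t=t^{*}$). What your argument actually proves is: $c\ge 0$ everywhere, and if $c(\mathbf{x}^{*},t^{*})=0$ at an interior point then $c\equiv 0$ for $t\le t^{*}$. That is the correct form of the result---the all-of-$D_T$ dichotomy in the lemma's own statement is too strong for the same reason---and it is all the paper ever uses, since Theorem~\ref{Theorem:Non_negative_Theorem} relies only on $c_F\ge 0$ and $c_G\ge 0$.
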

\begin{proof}
  A proof can be found in McOwen \cite{McOwen}, 
  Evans \cite{Evans_PDE} or Pao \cite{Pao}.
\end{proof}
Using the above lemma, we now prove that all the chemical species in 
a bimolecular fast reaction also satisfy the non-negative constraint, 
which will be the main result of the paper on the mathematical front. 
\begin{theorem}[The non-negative constraint for transient bimolecular 
    fast reactions]
  \label{Theorem:Non_negative_Theorem}
  Let $c_{i}(\mathbf{x},t) \in C^{2,1}(D_T)\cap C(\bar{D}_T)$ 
  for $i = A, \, B, \, C$. If $f_i(\mathbf{x},t) \geq 0$, 
  $c_i^{\mathrm{p}}(\mathbf{x},t) \geq 0$, $c_i^{0}(\mathbf{x}) 
  \geq 0$  and $h_i^{\mathrm{p}} (\mathbf{x},t) \geq 0$ 
  $(i = A, \, B, \, C)$ then for bimolecular fast reactions 
  we have $c_i(\mathbf{x},t) \geq 0$ $\forall \mathbf{x} 
  \in \bar{\Omega}$ and $t \in ]0, \mathcal{I}]$.
\end{theorem}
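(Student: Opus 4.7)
The plan is to reduce everything to the invariants $c_F$ and $c_G$, for which the non-negativity follows directly from Lemma \ref{Lemma:Non_negative_Lemma}, and then to argue algebraically from the reconstruction formulas \eqref{Eqn:Fast_A}--\eqref{Eqn:Fast_C} that each chemical species is non-negative.

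First I would verify that $c_F$ and $c_G$ satisfy the hypotheses of Lemma \ref{Lemma:Non_negative_Lemma}. Since the stoichiometric coefficients are positive, the ratios $n_A/n_C$ and $n_B/n_C$ are strictly positive, so by the definitions in \eqref{Eqn:Volumetric_Source_for_F}--\eqref{Eqn:Volumetric_Source_for_G} and the analogous expressions on the Dirichlet and Neumann boundaries and at the initial time, non-negativity of $f_A, f_C$, $c_A^{\mathrm{p}}, c_C^{\mathrm{p}}$, $h_A^{\mathrm{p}}, h_C^{\mathrm{p}}$, $c_A^{0}, c_C^{0}$ transfers to non-negativity of $f_F, c_F^{\mathrm{p}}, h_F^{\mathrm{p}}, c_F^{0}$, and similarly for $G$. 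Together with the linear diffusion equations \eqref{Eqn:Diffusion_for_F} and \eqref{Eqn:Diffusion_for_G}, this places $c_F$ and $c_G$ in the setting of Lemma \ref{Lemma:Non_negative_Lemma} (choosing $\alpha_0, \beta_0$ according to whether the point lies on $\Gamma^{\mathrm{D}}$ or $\Gamma^{\mathrm{N}}$), yielding $c_F(\mathbf{x},t) \geq 0$ and $c_G(\mathbf{x},t) \geq 0$ throughout $\bar{D}_T$.

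Second, I would read off $c_A \geq 0$ and $c_B \geq 0$ immediately from \eqref{Eqn:Fast_A}--\eqref{Eqn:Fast_B}, because the outer $\max[\cdot,0]$ is non-negative and the scalar multiplier $n_B/n_A$ in \eqref{Eqn:Fast_B} is positive. The only step that requires a short argument is $c_C \geq 0$. Here I would split into two cases based on the sign of $c_F - (n_A/n_B) c_G$: in the case $c_F - (n_A/n_B) c_G \geq 0$, formula \eqref{Eqn:Fast_A} gives $c_A = c_F - (n_A/n_B) c_G$, so $c_F - c_A = (n_A/n_B) c_G \geq 0$ by step one; in the complementary case, $c_A = 0$, so $c_F - c_A = c_F \geq 0$, again by step one. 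In either case $c_C = (n_C/n_A)(c_F - c_A) \geq 0$.

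The entire argument is essentially a bookkeeping exercise once Lemma \ref{Lemma:Non_negative_Lemma} is in hand, and I do not anticipate a genuine obstacle. The only mildly delicate point is the two-case analysis for $c_C$, which is where the nonlinear $\max$ operator in the fast-reaction reconstruction can potentially obscure the sign; writing out the two cases explicitly, as above, removes any ambiguity. I would also briefly remark that the regularity hypothesis $c_i \in C^{2,1}(D_T) \cap C(\bar{D}_T)$ transfers to $c_F, c_G$ by linearity, so Lemma \ref{Lemma:Non_negative_Lemma} is applicable without extra work.
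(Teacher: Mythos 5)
Your proposal is correct and follows essentially the same route as the paper: transfer the non-negativity of the data to the invariants $F$ and $G$, invoke Lemma \ref{Lemma:Non_negative_Lemma} to get $c_F, c_G \geq 0$, read off $c_A, c_B \geq 0$ from the $\max$ formulas, and handle $c_C$ by the two-case analysis (which is exactly the paper's observation that $c_C$ equals either $\tfrac{n_C}{n_A} c_F$ or $\tfrac{n_C}{n_B} c_G$). No gaps.
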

\begin{proof}
  Since $c_{i}(\mathbf{x},t) \in C^{2,1}(D_T) \cap C(\bar{D}_T)$ 
  $(i = A, \, B, \, C)$, using equations 
  \eqref{Eqn:Definitions_of_F}--\eqref{Eqn:Definitions_of_G}, 
  it is evident that $c_{F}(\mathbf{x},t), \, c_{G}(\mathbf{x},t) 
  \in C^{2,1}(D_T) \cap C(\bar{D}_T)$. Based on the hypothesis 
  given in the theorem and using equations
  \eqref{Eqn:Diffusion_for_Dirchlet_F}--\eqref{Eqn:Diffusion_for_IC_F}, 
  \eqref{Eqn:Diffusion_for_Dirchlet_G}--\eqref{Eqn:Diffusion_for_IC_G} 
  and \eqref{Eqn:Volumetric_Source_for_F}--\eqref{Eqn:Volumetric_Source_for_G}, 
  one can conclude that 
  \begin{subequations} 
    \begin{align}
      f_F(\mathbf{x},t) \geq 0, \; c^{\mathrm{p}}_F(\mathbf{x},t) 
      \geq 0, \; h^{\mathrm{p}}_F(\mathbf{x},t) \geq 0, \; c^{0}_F
      (\mathbf{x},t) \geq 0 \\
      f_G(\mathbf{x},t) \geq 0, \; c^{\mathrm{p}}_G(\mathbf{x},t) 
      \geq 0, \; h^{\mathrm{p}}_G(\mathbf{x},t) \geq 0, \; c^{0}_G
      (\mathbf{x},t) \geq 0
    \end{align}
  \end{subequations}
  Using Lemma \ref{Lemma:Non_negative_Lemma}, one can 
  conclude that 
  \begin{align}
    c_{F}(\mathbf{x},t) \geq 0 \; \mathrm{and} \; 
    c_{G}(\mathbf{x},t) \geq 0 \quad \forall \mathbf{x} 
    \in \bar{\Omega}, \; t \in ]0, \mathcal{I}]
  \end{align}
  From the relations given in equations \eqref{Eqn:Fast_A} 
  and \eqref{Eqn:Fast_B}, it is evident that
  \begin{align}
    c_{A}(\mathbf{x},t) \geq 0 \; \mathrm{and} \; 
    c_{B}(\mathbf{x},t) \geq 0 \quad \forall \mathbf{x} 
    \in \bar{\Omega}, \; t \in ]0, \mathcal{I}]
  \end{align}
  From equations \eqref{Eqn:Fast_A} and \eqref{Eqn:Fast_C}, 
  one can conclude that $c_{C}(\mathbf{x},t)$ will be equal 
  to either $\frac{n_C}{n_A} c_{F}(\mathbf{x},t)$ or 
  $\frac{n_C}{n_B} c_{G}(\mathbf{x},t)$.
  In either case, we have 
  \begin{align}
    c_{C}(\mathbf{x},t) \geq 0 \quad \forall \mathbf{x} 
    \in \bar{\Omega}, \; t \in ]0, \mathcal{I}]
  \end{align}
  This concludes the proof. 
\end{proof}
\begin{remark}
  One can show that even the steady-state solution of bimolecular fast 
  reactions satisfies the non-negative constraint. A non-negative result 
  similar to Lemma \ref{Lemma:Non_negative_Lemma} exists for steady-state 
  scalar diffusion equation (for example, see Gilbarg and Trudinger 
  \cite{Gilbarg_Trudinger}). Using such a result and following the 
  arguments presented in Theorem \ref{Theorem:Non_negative_Theorem}, 
  one can show that the concentrations of the chemical species in a 
  steady-state bimolecular fast reaction also satisfy the non-negative 
  constraint. 
\end{remark}

\subsection{Numerical solution strategy}
\label{SubSec:Numerical_Solution_Strategy}
The proposed numerical solution strategy for simulating 
\emph{fast} irreversible bimolecular reactions boils down 
to solving two uncoupled tensorial diffusion equations. 
The procedure can be outlined as follows: 
\begin{itemize}
\item Solve the uncoupled tensorial diffusion equations given by 
  \eqref{Eqn:Diffusion_for_F}--\eqref{Eqn:Diffusion_for_IC_F} and 
  \eqref{Eqn:Diffusion_for_G}--\eqref{Eqn:Diffusion_for_IC_G} to 
  obtain $c_F$ and $c_G$. 
\item Using the relations given in equations 
  \eqref{Eqn:Fast_A}--\eqref{Eqn:Fast_C}, compute 
  the concentrations of reactants $A$ and $B$, and 
  product $C$.   
\end{itemize}
In the next subsection we shall outline a non-negative numerical 
solver for tensorial diffusion equation, which will be used to 
obtain the invariants $F$ and $G$. It should be noted that if 
one obtains non-negative values for the invariants, based on 
the second step in the aforementioned procedure, it is obvious 
that the concentrations for $A$, $B$ and $C$ will be non-negative 
even in the numerical setting. 

\subsection{A non-negative solver}
In references \cite{Liska_Shashkov_CiCP_2008_v3_p852,
  Nakshatrala_Valocchi_JCP_2009_v228_p6726,
  Nagarajan_Nakshatrala_IJNMF_2011_v67_p820,
  Nakshatrala_Nagarajan_arXiv_2012} numerical methodologies 
have been proposed for diffusion-type equations based on 
optimization techniques. In this paper, we shall extend 
these methodologies to both steady-state and transient 
solutions for diffusion-controlled bimolecular-reactive 
systems. 

\subsubsection{Steady-state analysis:} 
The weak statements based on the standard Galerkin formulation 
to obtain the steady-state solutions for the invariants $F$ 
and $G$ can be written as follows: Find $c_F(\mathbf{x}) \in 
\mathcal{P}_F$ and $c_G(\mathbf{x}) \in \mathcal{P}_G$ such 
that we have
\begin{subequations}
  \begin{align}
    \label{Eqn:SteadyState_single_field_formulation_diffusion}
    \mathcal{B}(q;c_F) = L_{F}(q) \quad \forall  
    q(\mathbf{x}) \in \mathcal{Q} \\
    \mathcal{B}(q;c_G) = L_{G}(q) \quad \forall  
    q(\mathbf{x}) \in \mathcal{Q}
  \end{align}
\end{subequations}
where the bilinear form and the linear functionals are 
defined as follows: 
\begin{subequations}
  \begin{align}
    \mathcal{B}(q;c) &:= \int_{\Omega} \mathrm{grad}[q(\mathbf{x})] 
    \cdot \mathbf{D}(\mathbf{x}) \; \mathrm{grad}[c(\mathbf{x})] \; 
    \mathrm{d} \Omega \\
    L_{F}(q) &:= \int_{\Omega} q(\mathbf{x}) \; f_F(\mathbf{x}) \; 
    \mathrm{d} \Omega + \int_{\Gamma^{\mathrm{N}}} q(\mathbf{x}) 
    \; h^{\mathrm{p}}_F(\mathbf{x}) \; \mathrm{d} \Gamma \\
    L_{G}(q) &:= \int_{\Omega} q(\mathbf{x}) \; f_G(\mathbf{x}) \; 
    \mathrm{d} \Omega + \int_{\Gamma^{\mathrm{N}}} q(\mathbf{x}) 
    \; h^{\mathrm{p}}_G(\mathbf{x}) \; \mathrm{d} \Gamma 
  \end{align}
\end{subequations}
In the above discussion, the following function spaces have been used: 
\begin{subequations}
\begin{align}
  \mathcal{P}_F &:= \left\{c(\mathbf{x}) \in H^{1}(\Omega) \; \big| \; 
  c(\mathbf{x}) = c^{\mathrm{p}}_F(\mathbf{x}) \; \mathrm{on} \; 
  \Gamma^{\mathrm{D}}\right\} \\
  \mathcal{P}_G &:= \left\{c(\mathbf{x}) \in H^{1}(\Omega) \; \big| \; 
  c(\mathbf{x}) = c^{\mathrm{p}}_G(\mathbf{x}) \; \mathrm{on} \; 
  \Gamma^{\mathrm{D}}\right\} \\
  \label{Eqn:Function_Space_TestFunction}
  \mathcal{Q} &:= \left\{q(\mathbf{x}) \in H^{1}(\Omega) \; \big| \; 
  q(\mathbf{x}) = 0 \; \mathrm{on} \; \Gamma^{\mathrm{D}}\right\} 
\end{align}
\end{subequations}
where $H^{1}(\Omega)$ is a standard Sobolev space \cite{Brezzi_Fortin}. 
However, it should be noted that the Galerkin formulation does 
not satisfy the non-negative constraint and maximum principles 
for diffusion-type equations (see references 
\cite{Nakshatrala_Valocchi_JCP_2009_v228_p6726,
  Nagarajan_Nakshatrala_IJNMF_2011_v67_p820,
  Nakshatrala_Nagarajan_arXiv_2012}). These violations 
will be exacerbated in the case of diffusive-reactive 
systems (see Section \ref{Sec:S5_NN_NR} in this paper). 
We now modify the Galerkin formulation to meet the 
non-negative constraint and maximum principles for 
fast bimolecular diffusive-reactive systems. 

From Vainberg's theorem \cite{Vainberg,Hjelmstad}, the weak form 
\eqref{Eqn:SteadyState_single_field_formulation_diffusion} has 
an equivalent minimization problem, which can be written as follows:
\begin{subequations} 
\begin{align}
  \label{Eqn:Minimzation_Problem_Statement}
  \mathop{\mathrm{minimize}}_{c_F(\mathbf{x}) \in \mathcal{P}_F} & 
  \quad \frac{1}{2} \mathcal{B}(c_F;c_F) - L_{F}(c_F) \\
  \label{Eqn:Minimzation_Problem_Constraint}
  \mbox{subject to} & \quad c_F^{\mathrm{min}} \leq 
  c_F(\mathbf{x}) \leq c_F^{\mathrm{max}}
\end{align}
\end{subequations}
where $c_F^{\mathrm{min}}$ and $c_F^{\mathrm{max}}$ are, 
respectively, the lower and upper bounds for the 
invariant $F$. These bounds arise due to the 
non-negative constraint and the maximum principle. 
For example, to enforce the non-negative constraint 
one can take $c_F^{\mathrm{min}} = 0$.

After performing finite element discretization using 
low-order finite elements on the above minimization 
problem, a numerical methodology that satisfies the 
non-negative constraint and the maximum principle 
for the invariant $F$ can be written as follows:
\begin{subequations}
  \label{Eqn:NonNegative_Solver_Invariant_F}
  \begin{align}
    \label{Eqn:NonNegative_Solver_Invariant_F_OF}
    \mathop{\mbox{minimize}}_{\boldsymbol{c}_F \in \mathbb{R}^{ndofs}} & 
    \quad \frac{1}{2} \langle \boldsymbol{c}_F;\boldsymbol{K} 
    \boldsymbol{c}_F\rangle - \langle \boldsymbol{c}_F; \boldsymbol{f}_F 
    \rangle \\
    \label{Eqn:NonNegative_Solver_Invariant_F_Con}
    \mbox{subject to} & \quad c_F^{\mathrm{min}} \boldsymbol{1} 
    \preceq \boldsymbol{c}_F \preceq c_F^{\mathrm{max}} 
    \boldsymbol{1} 
  \end{align}
\end{subequations}
where $\langle \cdot;\cdot \rangle$ represents the standard 
inner-product on Euclidean spaces, $\boldsymbol{1}$ denotes 
a vector of ones of size $ndofs \times 1$, and the symbol 
$\preceq$ represents the component-wise inequality for 
vectors. That is, 
\begin{align}
  \boldsymbol{a} \preceq \boldsymbol{b} \quad 
  \mathrm{implies} \quad a_i \leq b_i \quad \forall i
\end{align} 
The other notational conventions employed in the above equations 
\eqref{Eqn:NonNegative_Solver_Invariant_F_OF}--\eqref{Eqn:NonNegative_Solver_Invariant_F_Con} are as follows: $\boldsymbol{c}_F$ is the vector containing 
the nodal concentrations of the invariant $F$, $\boldsymbol{f}_F$ 
is the nodal volumetric source vector, and $\boldsymbol{K}$ is the 
coefficient matrix, which will be symmetric and positive definite. 
The number of degrees-of-freedom in the nodal concentration 
vector is denoted by $ndofs$. Hence the size of each of 
the vectors $\boldsymbol{c}_F$ and $\boldsymbol{f}_F$ will 
be $ndofs \times 1$, and the size of the matrix $\boldsymbol{K}$ 
will be $ndofs \times ndofs$. A numerical methodology for the 
invariant $G$ can be developed in a similar manner by replacing 
$\boldsymbol{f}_F$ with the corresponding volumetric source 
vector $\boldsymbol{f}_G$, $c_F^{\mathrm{min}}$ with $c_G^{\mathrm{min}}$ 
and $c_F^{\mathrm{max}}$ with $c_G^{\mathrm{max}}$. 

It should be noted that the coefficient matrix $\boldsymbol{K}$ 
(which is sometimes referred to ``stiffness'' matrix) for the 
invariant $G$ is the same as that of the invariant $F$. This 
is due to the fact that the diffusivity tensor $\mathbf{D}
(\mathbf{x})$ and the computational domain are the same for 
both the invariants. 
Since the coefficient matrix $\boldsymbol{K}$ is 
positive definite, the constrained optimization 
problem \eqref{Eqn:NonNegative_Solver_Invariant_F} 
belongs to \emph{convex quadratic programming} and 
has a unique global minimizer \cite{Nocedal_Wright}.
The first-order optimality conditions corresponding to the 
optimization problem \eqref{Eqn:NonNegative_Solver_Invariant_F} 
take the following form: 
\begin{subequations}
  \label{Eqn:Bimolecular_FOOC}
  \begin{align}
  \label{Eqn:Bimolecular_FOOC_1}
    &\boldsymbol{K} \boldsymbol{c}_F = \boldsymbol{f}_F 
    + \boldsymbol{\lambda}_F^{\mathrm{min}} - 
    \boldsymbol{\lambda}_F^{\mathrm{max}} \\
    \label{Eqn:Bimolecular_FOOC_2}
    &c_{F}^{\mathrm{min}} \boldsymbol{1} \preceq 
    \boldsymbol{c}_{F} \preceq c_{F}^{\mathrm{max}} 
    \boldsymbol{1} \\
    \label{Eqn:Bimolecular_FOOC_3}
    &\boldsymbol{\lambda}_{F}^{\mathrm{min}} \succeq \boldsymbol{0} \\
    \label{Eqn:Bimolecular_FOOC_4}
    &\boldsymbol{\lambda}_{F}^{\mathrm{max}} \succeq \boldsymbol{0} \\
    \label{Eqn:Bimolecular_FOOC_5}
    &\left(\boldsymbol{c}_F - c_F^{\mathrm{min}} \boldsymbol{1} 
    \right) \cdot \boldsymbol{\lambda}_{F}^{\mathrm{min}} = 0 \\
    \label{Eqn:Bimolecular_FOOC_6}
    &\left(c_F^{\mathrm{max}} \boldsymbol{1} - \boldsymbol{c}_F 
    \right) \cdot \boldsymbol{\lambda}_{F}^{\mathrm{max}} = 0 
  \end{align}
\end{subequations}
where $\boldsymbol{\lambda}_{F}^{\mathrm{min}}$ is the vector 
containing Lagrange multipliers corresponding to the constraint 
$c_{F}^{\mathrm{min}} \boldsymbol{1} \preceq \boldsymbol{c}_{F}$, 
and $\boldsymbol{\lambda}_{F}^{\mathrm{max}}$ is the vector 
containing Lagrange multipliers corresponding to the 
constraint $\boldsymbol{c}_{F} \preceq c_{F}^{\mathrm{max}} 
\boldsymbol{1}$. If the optimization problem for the 
invariant $F$ involves only the non-negative constraint, 
then one has to set $c_{F}^{\mathrm{min}} = 0$ and there 
will be only one set of Lagrange multipliers (i.e., the 
vector $\boldsymbol{\lambda}_F^{\mathrm{min}}$). 
In the optimization literature, the aforementioned 
first-order optimality conditions are commonly 
referred to as the Karush-Kuhn-Tucker (KKT) conditions 
\cite{Boyd_convex_optimization}. One could write similar 
equations for the invariant $G$.
The various steps in the proposed methodology to 
obtain steady-state solutions for bimolecular fast 
diffusive-reactive systems are summarized in Algorithm 
\ref{Algo:SteadyState_Bimolecular}, which will serve as 
a quick reference for computer implementation. 

\begin{remark}
  It should be noted that the shape (or interpolation) 
  functions for low-order finite elements (i.e., two-node 
  line element in 1D; three-node triangle element and 
  four-node quadrilateral element in 2D; and four-node 
  tetrahedron element, six-node wedge element and 
  eight-node brick element in 3D) are non-negative 
  within the element. Therefore, enforcing non-negative 
  constraints on the nodal concentrations will ensure 
  non-negativeness within the element, and hence in the 
  entire computational domain. It should also be noted 
  that the above methodology cannot be extended to 
  high-order finite elements, as high-order interpolation 
  functions can be negative within the finite element. 
  For further details see discussions in references 
  \cite{Nagarajan_Nakshatrala_IJNMF_2011_v67_p820,
    Payette_Nakshatrala_Reddy_IJNME_2012}. 
\end{remark}

\begin{algorithm}
  \caption{Numerical framework for steady-state fast bimolecular diffusive-reactive systems}
  \label{Algo:SteadyState_Bimolecular}
  \begin{algorithmic}[1]
    \STATE Input: Stoichiometric coefficients; volumetric sources; 
    and boundary conditions for the chemical species $A$, $B$, and $C$
    \STATE Calculate corresponding volumetric sources and boundary 
    conditions for the invariants
    \STATE Calculate $c_F^{\mathrm{min}}$, $c_F^{\mathrm{max}}$, 
    $c_G^{\mathrm{min}}$ and $c_G^{\mathrm{max}}$ based on 
    maximum principles and the non-negative constraint
    %
    \STATE Call optimization-based steady-state diffusion 
    solver to obtain $\boldsymbol{c}_F$: 
    \begin{align*}
      \mathop{\mbox{minimize}}_{\boldsymbol{c}_F \in \mathbb{R}^{ndofs}} & 
      \quad \frac{1}{2} \langle \boldsymbol{c}_F;\boldsymbol{K} 
      \boldsymbol{c}_F \rangle - \langle \boldsymbol{c}_F; \boldsymbol{f}_F 
      \rangle \\
      \mbox{subject to} & \quad c_F^{\mathrm{min}} \boldsymbol{1} \preceq 
      \boldsymbol{c}_F \preceq c_F^{\mathrm{max}} \boldsymbol{1} 
    \end{align*}
    %
    \STATE Call optimization-based steady-state diffusion 
    solver to obtain $\boldsymbol{c}_G$: 
    \begin{align*}
      \mathop{\mbox{minimize}}_{\boldsymbol{c}_G \in \mathbb{R}^{ndofs}} & 
      \quad \frac{1}{2} \langle \boldsymbol{c}_G;\boldsymbol{K} 
      \boldsymbol{c}_G \rangle - \langle \boldsymbol{c}_G; \boldsymbol{f}_G 
      \rangle \\
      \mbox{subject to} & \quad c_G^{\mathrm{min}} \boldsymbol{1} \preceq 
      \boldsymbol{c}_G \preceq c_G^{\mathrm{max}} \boldsymbol{1} 
    \end{align*}
    %
    \FOR{$i = 1, 2, \cdots$, Number of nodes in the mesh}
    %
    \IF {$ - \epsilon_{\mathrm{mach}} \leq \boldsymbol{c}_F(i) - 
      \left(\frac{n_A}{n_B}\right) \boldsymbol{c}_G(i) \leq + 
      \epsilon_{\mathrm{mach}}$} 
    \STATE $\boldsymbol{c}_A(i) = 0, \; \boldsymbol{c}_B(i) = 0, \; 
    \boldsymbol{c}_C(i) = \left(\frac{n_C}{n_A}\right) \boldsymbol{c}_F(i)$ 
    \ENDIF
    %
    \IF {$\boldsymbol{c}_F(i) - \left(\frac{n_A}{n_B}\right) \boldsymbol{c}_G(i) 
      < - \epsilon_{\mathrm{mach}}$}
    \STATE $\boldsymbol{c}_A(i) = 0, \; \boldsymbol{c}_B(i) = - \left(\frac{n_B}
           {n_A}\right) \boldsymbol{c}_F(i) + \boldsymbol{c}_G(i), \; 
           \boldsymbol{c}_C(i) = \left(\frac{n_C}{n_A}\right) \boldsymbol{c}_F(i)$ 
           \ENDIF
           %
           \IF {$\boldsymbol{c}_F(i) - \left(\frac{n_A}{n_B}\right) \boldsymbol{c}_G(i) 
             > +\epsilon_{\mathrm{mach}}$}
           \STATE $\boldsymbol{c}_A(i) = \boldsymbol{c}_F(i) - \left(\frac{n_A}{n_B}\right) 
           \boldsymbol{c}_G(i), \; \boldsymbol{c}_B(i) = 0, \; \boldsymbol{c}_C(i) = 
           \left(\frac{n_C}{n_B}\right) \boldsymbol{c}_G(i)$ 
           \ENDIF 
           \ENDFOR  
  \end{algorithmic}
\end{algorithm}

\begin{remark}
  A naive implementation of the numerical strategy proposed in 
  subsection \ref{SubSec:Specializing_fast_bimolecular_reactions} 
  will lead to propagation of numerical noise from the invariant 
  set to reactants and product. So care should be exercised while 
  implementing the proposed numerical strategy to find the 
  concentration of $A$, $B$, and $C$. As outlined in Algorithm 
  \ref{Algo:SteadyState_Bimolecular}, first the numerical noise 
  from invariants $F$ and $G$ should be removed so that product 
  $C$ does not get affected adversely. 
\end{remark}

\subsubsection{Transient analysis:}
We will first perform temporal discretization using the method 
of horizontal lines and then followed by spatial discretization. 
To this end, the time interval $[0, \mathcal{I}]$ is discretized 
into $N$ non-overlapping sub-intervals: 
\begin{align}
  [0, \mathcal{I}] = \bigcup_{n = 1}^{N} [t_{n-1}, t_{n}]
\end{align}
where $t_0 = 0$ and $t_N = \mathcal{I}$. For simplicity, 
we shall assume uniform time step, which will be denoted 
by $\Delta t$. That is,
\begin{align}
  \Delta t := t_{n+1} - t_n \quad \forall n
\end{align}
However, it should be noted that a straightforward modification 
can handle non-uniform time steps. The following notation is 
employed to obtain time discretized version of quantities 
$c_F(\mathbf{x},t)$ and $\frac{\partial c_F(\mathbf{x},t)}
{\partial t}$ at $t = t_n$:
\begin{subequations}
  \begin{align}
    c^{(n)}_F(\mathbf{x}) &:= c_F(\mathbf{x},t_n) \\
    v^{(n)}_F(\mathbf{x}) &:= \frac{\partial c_F(\mathbf{x},t_n)}{\partial t} 
  \end{align}
\end{subequations}
  
Using the method of horizontal lines along with the backward 
Euler time stepping scheme, the governing equations 
\eqref{Eqn:Diffusion_for_F}--\eqref{Eqn:Diffusion_for_IC_F} 
at the time level $t_{n + 1}$ can be rewritten as the following 
anisotropic diffusion with decay:
\begin{subequations}
  \begin{align}
    \label{Eqn:Decay_Diffusion_for_F}
    &\left(\frac{1}{\Delta t} \right) c^{(n + 1)}_F(\mathbf{x})  
    - \mathrm{div} \left[\mathbf{D}(\mathbf{x}) \, \mathrm{grad}
    \left[c^{(n + 1)}_F(\mathbf{x}) \right] \right] = 
    f_F(\mathbf{x},t_{n + 1}) + \left( \frac{1}{\Delta t} \right)
    c^{(n)}_F(\mathbf{x}) \quad \mathrm{in} \; \Omega \\
    \label{Eqn:Decay_Diffusion_for_Dirchlet_F}
    &c^{(n + 1)}_F(\mathbf{x}) = c_F^{\mathrm{p}}(\mathbf{x},
    t_{n + 1}) := c^{\mathrm{p}}_A(\mathbf{x},t_{n + 1}) + 
    \left( \frac{n_A}{n_C} \right) c^{\mathrm{p}}_C(\mathbf{x},t_{n + 1}) 
    \quad \mathrm{on} \; \Gamma^{\mathrm{D}} \\
    \label{Eqn:Decay_Diffusion_for_Neumann_F}
    &\mathbf{n}(\mathbf{x}) \cdot \mathbf{D} (\mathbf{x}) \, 
    \mathrm{grad} \left[c^{(n + 1)}_F(\mathbf{x}) \right] =  
    h^{\mathrm{p}}_F(\mathbf{x},t_{n + 1}) := 
    h^{\mathrm{p}}_A(\mathbf{x},t_{n + 1}) + \left( \frac{n_A}{n_C} 
    \right) h^{\mathrm{p}}_C(\mathbf{x},t_{n + 1}) \quad \mathrm{on} 
    \; \Gamma^{\mathrm{N}} \\
    \label{Eqn:Decay_Diffusion_for_IC_F}
    &c_F(\mathbf{x},t_0) = c^{0}_F(\mathbf{x}) := 
    c^{0}_A(\mathbf{x}) + \left( \frac{n_A}{n_C} \right) 
    c^{0}_C(\mathbf{x}) \quad \mathrm{in} \; \Omega
  \end{align}
\end{subequations} 
where $f_F(\mathbf{x},t_{n + 1}) := f_A(\mathbf{x},t_{n + 1}) 
+ \left(\frac{n_A}{n_C} \right) f_C(\mathbf{x},t_{n + 1})$.

\begin{remark}
  From Section \ref{Sec:S2_Bimolecular_GE}, we have assumed that the 
  volumetric sources, initial and boundary conditions for $A$, $B$, 
  and $C$ are non-negative. Correspondingly the the volumetric source 
  $f_F(\mathbf{x},t_{n + 1})$, initial condition $c^{0}_F(\mathbf{x})$,
  the Dirichlet boundary condition $c_F^{\mathrm{p}}(\mathbf{x},t_{n + 1})$, 
  and the Neumann boundary condition $h^{\mathrm{p}}_F(\mathbf{x},t_{n + 1})$ 
  are non-negative. As the decay coefficient $\left(\frac{1}{\Delta t}
  \right)$ in front of $c^{(n + 1)}_F(\mathbf{x})$ is always greater than 
  zero and the diffusivity tensor $\mathbf{D}(\mathbf{x})$ is assumed 
  to be symmetric, bounded, and uniformly elliptic, a maximum principle 
  under these regularity assumptions for this type of anisotropic 
  diffusion with decay exists \cite{Nagarajan_Nakshatrala_IJNMF_2011_v67_p820}. 
\end{remark}

The standard single-field formulation for equations 
\eqref{Eqn:Decay_Diffusion_for_F}--\eqref{Eqn:Decay_Diffusion_for_IC_F} 
is given as follows: Find $c^{(n + 1)}_F(\mathbf{x}) \in \mathcal{P}^t_F$ 
such that we have
\begin{align}
  \label{Eqn:Transient_single_field_formulation_diffusion}
  \mathcal{B}^{t} \left(q;c^{(n + 1)}_F \right) = L^{t}_{F}(q) 
  \quad \forall q(\mathbf{x}) \in \mathcal{Q}
\end{align}
where the bilinear form and linear functional are, respectively, 
defined as 
\begin{subequations}
  \begin{align}
    \mathcal{B}^{t} \left(q;c^{(n + 1)}_F \right) &:= 
    \frac{1}{\Delta t} \int_{\Omega} q(\mathbf{x}) \; c^{(n + 1)}_F(\mathbf{x}) \; 
    \mathrm{d} \Omega + \int_{\Omega} \mathrm{grad}[q(\mathbf{x})] 
    \cdot \mathbf{D}(\mathbf{x}) \; \mathrm{grad}[c^{(n + 1)}_F(\mathbf{x})] 
    \; \mathrm{d} \Omega \\
    L^{t}_{F}(q) &:= \int_{\Omega} q(\mathbf{x}) \; 
    \left(f_F(\mathbf{x},t_{n + 1}) + \frac{1}{\Delta t} 
    c^{(n)}_F(\mathbf{x})\right) \; \mathrm{d} \Omega + 
    \int_{\Gamma^{\mathrm{N}}} q(\mathbf{x}) \; 
    h^{\mathrm{p}}_F(\mathbf{x},t_{n + 1}) \; \mathrm{d} \Gamma 
  \end{align}
\end{subequations}
The function space $\mathcal{P}^t_F$ is defined as follows (for more 
rigorous definition see \cite[Section 7.1]{Evans_PDE}):
\begin{align}
  \mathcal{P}^t_F &:= \left\{c_F(\cdot ,t) \in H^{1}(\Omega) \; \big| \; 
  c_F(\mathbf{x},t) = c^{\mathrm{p}}_F(\mathbf{x},t) \; \mathrm{on} \; 
  \Gamma^{\mathrm{D}}\right\}
\end{align}
but the function space $\mathcal{Q}$ defined by equation 
\eqref{Eqn:Function_Space_TestFunction} remains the same. 
In a similar manner, one can obtain a numerical methodology 
for transient solutions for the invariant $G$.
 
An equivalent minimization formulation for the weak form 
\eqref{Eqn:Transient_single_field_formulation_diffusion} 
can be written as follows:
\begin{subequations} 
\begin{align}
  \label{Eqn:Minimzation_Problem_Statement_DiffusionDecay}
  \mathop{\mathrm{minimize}}_{c^{(n + 1)}_F(\mathbf{x}) \in 
  \mathcal{P}^t_F} & \quad \frac{1}{2} \mathcal{B}^{t} 
  \left(c^{(n + 1)}_F; c^{(n + 1)}_F \right) - L^t_{F} 
  \left(c^{(n + 1)}_F \right) \\
  \label{Eqn:Minimzation_Problem_Constraint_DiffusionDecay}
  \mbox{subject to} & \quad c_F^{\mathrm{min}} \leq 
  c^{(n + 1)}_F(\mathbf{x}) \leq c_F^{\mathrm{max}}
\end{align}
\end{subequations}
After finite element discretization by means of low-order finite elements, 
a non-negative formulation based on the minimization problem can be devised 
as follows:
\begin{subequations}
  \label{Eqn:NonNegative_Solver_Invariant_F_DiffusionDecay}
  \begin{align}
    \mathop{\mbox{minimize}}_{\boldsymbol{c}^{(n + 1)}_F \in 
    \mathbb{R}^{ndofs}} & \quad \frac{1}{2}  \left \langle 
    \boldsymbol{c}^{(n + 1)}_F; \widetilde{\boldsymbol{K}} 
    \boldsymbol{c}^{(n + 1)}_F \right \rangle - \left \langle 
    \boldsymbol{c}^{(n + 1)}_F; \boldsymbol{f}^{(n + 1)}_F  
    \right \rangle - \frac{1}{\Delta t}  \left \langle 
    \boldsymbol{c}^{(n + 1)}_F; \boldsymbol{c}^{(n)}_F
    \right \rangle \\
    \mbox{subject to} & \quad c_F^{\mathrm{min}} \boldsymbol{1} 
    \preceq \boldsymbol{c}^{(n + 1)}_F \preceq c_F^{\mathrm{max}}
    \boldsymbol{1} 
  \end{align}
\end{subequations}
where $\boldsymbol{c}^{(n + 1)}_F$ and $\boldsymbol{f}^{(n + 1)}_F$ are 
the nodal concentration and volumetric source vectors for invariant $F$ at 
time level $t_{n + 1}$. Correspondingly, the nodal concentration vector 
$\boldsymbol{c}^{(n)}_F$ obtained at time level $t_{n}$ also acts as a 
volumetric source similar to $\boldsymbol{f}^{(n + 1)}_F$. As we have 
employed low-order finite elements for spatial discretization, we have 
$\boldsymbol{c}^{(0)}_F \succeq \boldsymbol{0}$ and $\boldsymbol{f}^{(n + 1)}_F 
\succeq \boldsymbol{0}$. Hence at each time level, the net volumetric source 
vector $\boldsymbol{f}^{(n + 1)}_F + \frac{1}{\Delta t} \boldsymbol{c}^{(n)}_F 
\succeq \boldsymbol{0}$. 
Note that the symmetric positive definite matrix $\widetilde{\boldsymbol{K}}$ 
is different to that of $\boldsymbol{K}$, which is the coefficient matrix in 
steady-state analysis. Since the coefficient matrix $\widetilde{\boldsymbol{K}}$ 
is positive definite, the discrete minimization problem 
\eqref{Eqn:NonNegative_Solver_Invariant_F_DiffusionDecay} 
belongs to \emph{convex quadratic programming}, and from 
the optimization theory a unique global minimizer exists. 
One could write the corresponding first-order optimality 
conditions corresponding to the optimization problem 
\eqref{Eqn:NonNegative_Solver_Invariant_F_DiffusionDecay}, 
which will be similar to equations 
\eqref{Eqn:Bimolecular_FOOC_1}--\eqref{Eqn:Bimolecular_FOOC_6}.
Algorithm \ref{Algo:Transient_Bimolecular} outlines the 
steps involved in the implementation of the proposed 
methodology for transient diffusive-reactive systems. 

\begin{algorithm}
  \caption{Numerical framework for transient fast bimolecular diffusive-reactive systems}
  \label{Algo:Transient_Bimolecular}
  \begin{algorithmic}[1]
    \STATE Input: Time step $\Delta t$; total time of interest $\mathcal{I}$; 
    stoichiometric coefficients; volumetric sources; initial and boundary 
    conditions for the chemical species $A$, $B$, and $C$
    \STATE Calculate corresponding volumetric sources, initial 
    and boundary conditions for the invariants, and form the 
    vectors $\boldsymbol{c}_F^{(0)}$ and $\boldsymbol{c}_G^{(0)}$ 
    \STATE Calculate $c_F^{\mathrm{min}}$, $c_F^{\mathrm{max}}$, 
    $c_G^{\mathrm{min}}$ and $c_G^{\mathrm{max}}$ based on 
    maximum principles and the non-negative constraint
    %
    \FOR{$n = 0, 1, \cdots, N - 1$}
    %
    \STATE Call optimization-based steady-state diffusion 
    with decay solver to obtain $\boldsymbol{c}_F^{(n+1)}$:    
    \begin{align*}
      \mathop{\mbox{minimize}}_{\boldsymbol{c}^{(n + 1)}_F \in 
        \mathbb{R}^{ndofs}} & \quad \frac{1}{2}  \left \langle 
      \boldsymbol{c}^{(n + 1)}_F; \widetilde{\boldsymbol{K}} 
      \boldsymbol{c}^{(n + 1)}_F \right \rangle - \left \langle 
      \boldsymbol{c}^{(n + 1)}_F; \boldsymbol{f}^{(n + 1)}_F  
      \right \rangle - \frac{1}{\Delta t}  \left \langle 
      \boldsymbol{c}^{(n + 1)}_F; \boldsymbol{c}^{(n)}_F
      \right \rangle \\
      \mbox{subject to} & \quad c_F^{\mathrm{min}} \boldsymbol{1} \preceq 
      \boldsymbol{c}^{(n + 1)}_F \preceq c_F^{\mathrm{max}} \boldsymbol{1} 
    \end{align*}
    \STATE Call optimization-based steady-state diffusion 
    with decay solver to obtain $\boldsymbol{c}_G^{(n+1)}$:    
    \begin{align*}
      \mathop{\mbox{minimize}}_{\boldsymbol{c}^{(n + 1)}_G \in 
        \mathbb{R}^{ndofs}} & \quad \frac{1}{2}  \left \langle 
      \boldsymbol{c}^{(n + 1)}_G; \widetilde{\boldsymbol{K}} 
      \boldsymbol{c}^{(n + 1)}_G \right \rangle - \left \langle 
      \boldsymbol{c}^{(n + 1)}_G; \boldsymbol{f}^{(n + 1)}_G  
      \right \rangle - \frac{1}{\Delta t}  \left \langle 
      \boldsymbol{c}^{(n + 1)}_G; \boldsymbol{c}^{(n)}_G
      \right \rangle \\
      \mbox{subject to} & \quad c_G^{\mathrm{min}} \boldsymbol{1} \preceq 
      \boldsymbol{c}^{(n + 1)}_G \preceq c_G^{\mathrm{max}} \boldsymbol{1} 
    \end{align*}
    %
    \FOR{$i = 1, 2, \cdots$, Number of nodes in the mesh}
    %
    \IF {$ - \epsilon_{\mathrm{mach}} \leq \boldsymbol{c}^{(n+1)}_F(i) - \left(\frac{n_A}{n_B}\right) 
      \boldsymbol{c}^{(n+1)}_G(i) \leq + \epsilon_{\mathrm{mach}}$}
    \STATE $\boldsymbol{c}_A^{(n+1)}(i) = 0, \; \boldsymbol{c}_B^{(n+1)}(i) = 0, \; 
    \boldsymbol{c}_C^{(n+1)}(i) = \left(\frac{n_C}{n_A}\right) \boldsymbol{c}^{(n+1)}_F(i)$
    \ENDIF
    %
    \IF {$\boldsymbol{c}^{(n+1)}_F(i) - \left(\frac{n_A}{n_B}\right) \boldsymbol{c}^{(n+1)}_G(i) 
      < - \epsilon_{\mathrm{mach}}$}
    \STATE $\boldsymbol{c}_A^{(n+1)}(i) = 0, \; \boldsymbol{c}_B^{(n+1)}(i) = - \left(\frac{n_B}
           {n_A}\right) \boldsymbol{c}^{(n+1)}_F(i) + \boldsymbol{c}^{(n+1)}_G(i), \; 
           \boldsymbol{c}_C^{(n+1)}(i) = \left(\frac{n_C}{n_A}\right) \boldsymbol{c}^{(n+1)}_F(i)$ 
           \ENDIF
           %
           \IF {$\boldsymbol{c}^{(n+1)}_F(i) - \left(\frac{n_A}{n_B}\right) 
             \boldsymbol{c}^{(n+1)}_G(i) > +\epsilon_{\mathrm{mach}}$}
           \STATE $\boldsymbol{c}_A^{(n+1)}(i) = \boldsymbol{c}^{(n+1)}_F(i) 
           - \left(\frac{n_A}{n_B}\right) \boldsymbol{c}^{(n+1)}_G(i), \; 
           \boldsymbol{c}_B^{(n+1)}(i) = 0, \; \boldsymbol{c}_C^{(n+1)}(i) = 
           \left(\frac{n_C}{n_B}\right) \boldsymbol{c}^{(n+1)}_G(i)$  
           \ENDIF 
           \ENDFOR   
    \ENDFOR
  \end{algorithmic}
\end{algorithm}

\section{PROPERTIES OF THE PROPOSED COMPUTATIONAL 
  FRAMEWORK: A THEORETICAL STUDY}
\label{Sec:S4_NN_Theoretical} 
In a continuous setting, it is well-known from the theory 
of partial differential equations that the comparison 
principle, the maximum principle and the non-negative 
principle satisfy the following commutative diagram:

\begin{tikzpicture}[description/.style={fill=white,inner sep=2pt}]
\matrix (m) [matrix of math nodes, row sep=3em,
column sep=2.5em, text height=1.5ex, text depth=0.25ex]
{\mbox{uniqueness of the solution} & \mbox{comparison principle} 
& \mbox{maximum principle} \\
& \mbox{non-negative constraint} & & \\ };
\path[->,font=\scriptsize]
(m-1-2) edge node[thick,auto] {} (m-1-1) 
(m-1-2) edge node[thick,auto] {} (m-1-3) 
(m-2-2) edge node[thick,auto] {} (m-1-2)
(m-1-3) edge node[thick,auto] {} (m-2-2);
\end{tikzpicture}

\noindent
For linear problems, one can \emph{also directly} prove 
the uniqueness of the solution using the maximum principle 
(instead of the comparison principle). However, for semilinear 
and quasilinear elliptic and parabolic partial differential 
equations, the comparison principle is more convenient to 
prove directly the uniqueness of the solution. As mentioned 
earlier, the diffusive-reactive system given by equation 
\eqref{Eqn:Mixture_for_A_B_C} is a semilinear partial 
differential equation. This is the reason why we have 
indicated in the above commutative diagram that the 
uniqueness of the solution is a direct consequence 
of the comparison principle. 

It is natural to ask which of these properties are inherited 
in discrete setting by the optimization-based solver for 
solving diffusion-type equations. Prior studies on the 
optimization-based methodologies (e.g., references 
\cite{Liska_Shashkov_CiCP_2008_v3_p852,
Nakshatrala_Valocchi_JCP_2009_v228_p6726,
Nagarajan_Nakshatrala_IJNMF_2011_v67_p820,
Nakshatrala_Nagarajan_arXiv_2012}) did not address whether 
such a commutative diagram is satisfied in the discrete 
setting. In particular, these studies did not discuss 
the discrete version of the comparison principle. 
The standard comparison principle takes the following 
form:
\emph{
  Let $\mathcal{L}[u] := \alpha(\mathbf{x}) u(\mathbf{x}) - 
  \mathrm{div}[\mathbf{D}(\mathbf{x})\mathrm{grad}[u]]$. 
  Let $u_1(\mathbf{x}), \; u_2(\mathbf{x}) \in C^{2}(\Omega) 
  \cap C^{0}(\overline{\Omega})$. If $\mathcal{L}[u_1] 
  \geq \mathcal{L}[u_2] \; \mathrm{in} \; \Omega$ and 
  $u_1(\mathbf{x}) \geq u_2(\mathbf{x}) \; on \; \partial 
  \Omega$ then $u_1(\mathbf{x}) \geq u_2(\mathbf{x}) \; 
  in \; \overline{\Omega}$.}
We shall say that a numerical formulation inherits the comparison 
principle (or possess a discrete comparison principle) if for any 
two finite dimensional vectors satisfying $\boldsymbol{f}_1 \succeq 
\boldsymbol{f}_2$ implies the finite dimensional solutions under 
the numerical formulation satisfy $\boldsymbol{c}_1 \succeq 
\boldsymbol{c}_2$.

In discrete setting, the optimization-based methodology 
has the following properties:
\begin{enumerate}[(i)]
\item The methodology is based on the finite element method.
\item The methodology satisfies both the non-negative principle 
  and the maximum principle on general computational grids and 
  with no additional restrictions on the time step.
\item The methodology does \emph{not} satisfy the comparison principle. 
  A counterexample is provided in Figure \ref{Fig:NN_violation_of_DCP}. 
  However, if $\boldsymbol{f}_1 = \eta \boldsymbol{f}_2$ with $\eta 
  \geq 0$, then $\boldsymbol{c}_1 = \eta \boldsymbol{c}_2$ under the 
  optimization-based solver. 
\item Although a discrete version of the comparison principle 
  does not hold, a unique solution always exists under the 
  optimization-based solver. The existence and the uniqueness 
  of the solution in the discrete setting stems from the 
  results in the convex programming with bounds on the 
  variables. 
  Of course, the proofs from the optimization theory do not 
  require a discrete version of the comparison principle 
  to hold but use alternate approaches from Matrix Algebra 
  and Analysis \cite{Boyd_convex_optimization}. 
\item The solution procedure is nonlinear, as a quadratic 
  programming optimization problem with inequalities is 
  nonlinear.
\item One can employ the consistent capacity matrix that 
  stems from the variational structure of the underling 
  weak formulation. If needed, one can also employ a lumped 
  capacity matrix, which is considered to be a variational 
  crime. 
\end{enumerate}
In conclusion, the proposed optimization methodology does not possess a 
commutative diagram similar to the one given above. But it should be 
noted that the optimization-based method is the only known methodology 
that satisfies the non-negative principle and the maximum principle on 
general computational grids with no additional restrictions on the time 
step. Some other noteworthy efforts on non-negative solvers for 
diffusion-type equations are by Le Potier 
\cite{LePotier_CRM_2005_v341_p787} and Lipnikov \emph{et al.} 
\cite{Lipnikov_Shashkov_Svyatskiy_Vassilevski_JCP_2007_v227_p492,
Lipnikov_Svyatskiy_Vassilevski_JCP_2009_v228_p703}, which have 
the following attributes: 
\begin{enumerate}[(i)]
\item The methodology is based on the finite volume method.
\item The methodology satisfies the non-negative principle 
  on computational grids with triangles. The method does 
  \emph{not} satisfy the maximum principle. 
\item The methodology does not satisfy the comparison principle. 
\item The solution procedure is nonlinear, as the collocation 
  points (i.e., the location of the unknown concentrations) 
  in turn depend on the concentration. 
\item One can employ only a lumped capacity matrix, which 
  is consistent with the finite volume method. Also, one 
  has to employ the backward Euler time stepping scheme, 
  as any other time stepping scheme can result in an 
  algorithmic failure or can produce negative concentrations.
\end{enumerate}
Some recent finite volume methods \cite{2009_LePotier_IJFV_v6,
2011_Droniou_LePotier_SIAMJNA_v49_p459_p490} and finite difference 
methods \cite{2011_Lipnikov_Manzini_Svyatskiy_JCP_v230_p2620_p2642} 
have succeeded in overcoming the aforementioned limitation (ii) 
(i.e., these methods satisfy maximum principles).

\section{NUMERICAL PERFORMANCE OF THE PROPOSED COMPUTATIONAL FRAMEWORK}
\label{Sec:S5_NN_NR}
In this section we shall illustrate the performance of the proposed 
computational framework for diffusive-reactive systems given by 
equations \eqref{Eqn:DRs_for_A}--\eqref{Eqn:DRs_for_IC}. We shall 
also perform numerical $h$-convergence using hierarchical meshes. 
We shall solve three representative problems: plume development 
from a boundary, plume formation due to continuous point source 
emitters, and decay of a chemically reacting slug. These types 
of problems have many practical applications. For example, these 
problems naturally arise from situations in which one may want to 
regulate/control an induced contaminant in an ecological system, 
which is often achieved by introducing a suitable chemical or 
biological species that reacts with the contaminant to form a 
harmless product. 
Some specific examples include chemical dispersion methods to 
control oil spills and airborne contaminants, and usage of 
bioremediators for regulating industrial emissions into 
water bodies. 

In all the numerical simulations reported in this paper, 
the resulting convex quadratic programming problems are 
solved using the MATLAB's \cite{MATLAB_2012a} built-in 
function handler \textsf{quadprog}, which offer robust 
solvers for solving quadratic programming problems. In 
particular, we have used the specific algorithm that can 
be invoked using the \textsf{interior-point-convex} option 
under \textsf{quadprog}, which is based on the numerical 
methods presented in references  
\cite{2004_Gould_Toint_MPSB_v100_p95_p132,
1992_Mehrotra_SIAMJO_v2_p575_p601,
1996_Gondzio_COA_v6_p137_p156}. For further details, 
see MATLAB's documentation \cite{MATLAB_2012a}.
The tolerance in the stopping criterion in solving 
convex quadratic programming is taken as $100 
\epsilon_{\mathrm{mach}}$, where $\epsilon_{\mathrm{mach}} 
\approx 2.22 \times 10^{-16}$ is the machine precision 
for a 64-bit machine.

\subsection{Numerical $h$-convergence study}
We shall use the method of manufactured solutions to perform 
numerical $h$-convergence. We shall restrict the current study 
to steady-state response. The computational domain is a rectangle 
of size $L_x \times L_y$. The diffusivity tensor for this numerical 
study is taken as follows:
\begin{align}
  \mathbf{D}(\mathbf{x}) = \mathbf{R} 
  \mathbf{D}_{\mathrm{diagonal}} \mathbf{R}^{\mathrm{T}}
\end{align}
where 
\begin{align}
  \label{Eqn:NN_rotation_tensor}
  &\mathbf{R} = \left(\begin{array}{cc}
    \cos(\theta) & -\sin(\theta) \\
    \sin(\theta) & \cos(\theta) \\
  \end{array}\right) \\
  &\mathbf{D}_{\mathrm{diagonal}} = \left(\begin{array}{cc}
    d_1 & 0 \\
    0 & d_2 \\
  \end{array}\right) 
\end{align}
The assumed analytical solutions for $c_F(\mathbf{x})$ and 
$c_G(\mathbf{x})$ are taken as follows:
\begin{subequations}
  \begin{align}
    \label{Eqn:Analytical_Invariant_F}
    c_F(\mathbf{x}) &= \sin\left(\frac{\pi \mathrm{x}}{2 L_x}\right)
    \sin\left(\frac{\pi \mathrm{y}}{2 L_y}\right) \\
    \label{Eqn:Analytical_Invariant_G}
    c_G(\mathbf{x}) &= \cos\left(\frac{\pi \mathrm{x}}{2 L_x}\right)
    \cos\left(\frac{\pi \mathrm{y}}{2 L_y}\right) 
  \end{align}
\end{subequations}
where $0 \leq \mathrm{x} \leq L_x$ and $0 \leq \mathrm{y} 
\leq L_y$. Using the above expressions, the source terms, 
boundary conditions, and expressions for the concentrations 
of $A$, $B$, and $C$ are in turn calculated. The corresponding 
volumetric sources for invariants $F$ and $G$ are given as 
follows:
\begin{subequations}
  \begin{align}
    \label{Eqn:Volumetric_Source_Invariant_F}
    f_F(\mathrm{x}, \mathrm{y}) &= \frac{\pi^2}{4} \sin
    \left(\frac{\pi \mathrm{x}}{2 L_x}\right) 
    \sin\left(\frac{\pi \mathrm{y}}{2 L_y}\right) 
    \left(d_1 \left(\frac{\cos^2(\theta)}{L_x^2} + 
    \frac{\sin^2(\theta)}{L_y^2}\right) + d_2 
    \left(\frac{\sin^2(\theta)}{L_x^2} + 
    \frac{\cos^2(\theta)}{L_y^2} \right)\right) \nonumber \\
    &- \frac{\pi^2}{2 L_x L_y} \left(d_1 - d_2 \right) \sin(\theta) 
    \cos(\theta) \cos\left(\frac{\pi \mathrm{x}}{2 L_x}\right) \cos
    \left(\frac{\pi \mathrm{y}}{2 L_y}\right) \\
    \label{Eqn:Volumetric_Source_Invariant_G}
    f_G(\mathrm{x}, \mathrm{y}) &= \frac{\pi^2}{4} \cos
    \left(\frac{\pi \mathrm{x}}{2 L_x}\right) 
    \cos\left(\frac{\pi \mathrm{y}}{2 L_y}\right) 
    \left(d_1 \left(\frac{\cos^2(\theta)}{L_x^2} + 
    \frac{\sin^2(\theta)}{L_y^2}\right) + d_2 
    \left(\frac{\sin^2(\theta)}{L_x^2} + 
    \frac{\cos^2(\theta)}{L_y^2} \right)\right) \nonumber \\
    &- \frac{\pi^2}{2 L_x L_y} \left(d_1 - d_2 \right) \sin(\theta) 
    \cos(\theta) \sin\left(\frac{\pi \mathrm{x}}{2 L_x}\right) \sin
    \left(\frac{\pi \mathrm{y}}{2 L_y}\right)
  \end{align}
\end{subequations}
A pictorial description of the boundary value problem is given 
in Figure \ref{Fig:Convergence_Study_SteadyState}. In this paper, 
we have taken the following values for the parameters to perform 
the steady-state numerical $h$-convergence: 
\begin{align}
  L_x = 2, \; L_y = 1, \; \theta = \pi/3, \; d_1 = 
  1000, \; d_2 = 1, \; n_A = 2, \; n_B = 3, \; 
  n_C = 1
\end{align} 
Figure \ref{Fig:Hierarchical_Meshes_Convergence} shows the 
typical computational meshes employed in this study. Figure 
\ref{Fig:Bimolecular_h_convergence_of_F_G} illustrates the 
convergence of the concentration of the invariants, and 
the convergence for the concentrations of the chemical 
species $A$, $B$, and $C$ is shown in Figure 
\ref{Fig:Bimolecular_h_convergence_of_A_B_C}. For invariants, 
we have shown the convergence in both $L_2$ norm and $H^{1}$ 
seminorm. However, for the chemical species $A$, $B$, and 
$C$, we have shown the convergence only in $L_2$ norm as 
$\mathrm{max}[\cdot]$ operator is non-smooth. 


\subsection{Plume development from boundary in a reaction tank}
The test problem is pictorially described in Figure \ref{Fig:Reaction_tank}. 
We restrict the present study to steady-state analysis. The stoichiometric 
coefficients are taken as $n_A = 1$, $n_B = 1$ and $n_C = 2$. The diffusivity 
tensor is taken from the subsurface literature \cite{Pinder_Celia}, and can 
be written as follows:
\begin{align}
  \label{Eqn:NN_subsurface_D}
  \mathbf{D}_{\mathrm{subsurface}}(\mathbf{x}) = 
    \alpha_{T} \|\mathbf{v}\| \mathbf{I} + 
  \frac{\alpha_L - \alpha_T}{\|\mathbf{v}\|} \mathbf{v} \otimes 
  \mathbf{v}
\end{align}
where $\otimes$ is the tensor product, $\mathbf{v}$ is velocity 
vector field, and $\alpha_T$ and $\alpha_L$ are, respectively, 
transverse and longitudinal diffusivity coefficients. It should be emphasized that we have neglected the advection, and 
the velocity is used to define the diffusion tensor.  
The velocity field is defined through a multi-mode stream 
function, which takes the following form: 
\begin{align}
  \label{Eqn:NN_Plume_subsurface_stream_function}
  \psi(\mathrm{x},\mathrm{y}) = -\mathrm{y} - \sum_{k=1}^{3} A_k \cos 
  \left(\frac{{p}_k \pi \mathrm{x}}{L_x} - \frac{\pi}{2}\right) \sin 
  \left( \frac{q_k \pi \mathrm{y}}{L_y}\right)
\end{align}
The corresponding components of the velocity are given by 
\begin{subequations}
  \label{Eqn:NN_Plume_subsurface_velocity_vector_field}
  \begin{align}
    \mathrm{v}_{\mathrm{x}}(\mathrm{x},\mathrm{y}) &= 
    -\frac{\partial \psi}{\partial \mathrm{y}} = 1 
    + \sum_{k=1}^{3} A_k \frac{q_k \pi}{L_y} \cos 
    \left(\frac{p_k \pi \mathrm{x}}{L_x} - \frac{\pi}{2}
    \right) \cos\left(\frac{q_k \pi \mathrm{y}}{L_y}\right) \\
    \mathrm{v}_{\mathrm{y}}(\mathrm{x},\mathrm{y}) &= 
    +\frac{\partial \psi}{\partial \mathrm{x}} = \sum_{k=1}^{3} 
    A_k \frac{p_k \pi}{L_x} \sin\left(\frac{p_k \pi \mathrm{x}}{L_x} 
    - \frac{\pi}{2}\right) \sin\left(\frac{q_k \pi \mathrm{y}}{L_y}\right)
  \end{align}
\end{subequations}
It is noteworthy that the above velocity vector field is solenoidal 
(i.e., $\mathrm{div}[\mathbf{v}] = 0$). Although realistic aquifers 
exhibit spatial variability on a hierarchy of scales, periodic or 
quasi-periodic models similar to the one outlined above (given by 
equations 
\eqref{Eqn:NN_subsurface_D}--\eqref{Eqn:NN_Plume_subsurface_velocity_vector_field}) 
have often been used due to their simplicity 
\cite{Kapoor_Kitanidis_TPM_1996_v22_p91,
Chrysikopoulos_Kitanidis_Roberts_WRR_1992_v28_p1517}. 

In this paper we shall take $(L_x,L_y) = (2, 1)$, 
$(p_1, p_2, p_3) = (4, 5, 10)$, $(q_1, q_2, q_3) 
= (1, 5, 10)$, $(A_1, A_2, A_3) = (0.08, 0.02, 
0.01)$, and $(\alpha_L, \alpha_T) = (1, 0.0001)$. 
The contours of the stream function, and the 
corresponding velocity vector field are shown 
in Figure \ref{Fig:NN_multi_mode_stream_function}. 
The computational domain is meshed using four-node quadrilateral 
elements. The numerical simulation is carried out on various 
structured meshes with XSeed and YSeed ranging from 97 to 259, 
where XSeed and YSeed are, respectively, the number of nodes 
along the x-direction and y-direction. That is, a structured mesh 
with XSeed = YSeed = 259 has over 67,000 unknown nodal 
concentrations.

The values for prescribed concentrations on the boundary are 
taken as $c_A^{\mathrm{p}} = 1$ and $c_B^{\mathrm{p}} = 10$. Figures 
\ref{Fig:NN_Plume_subsurface_F_Q4_contours}--\ref{Fig:NN_Plume_subsurface_C_Q4_contours} 
compare the contours of the concentrations of the invariants, 
reactants and product under the Galerkin formulation, the 
clipping procedure and the proposed non-negative formulation. 
Figure \ref{Fig:NN_Plume_subsurface_FGC_Q4_contours} shows 
the regions in the domain that have violated the lower and 
upper bounds on the concentrations of the invariants and 
the product under the Galerkin formulation. To provide 
more quantitative inference on the violations, the variation 
of the concentration of the product along two sections given 
by $\mathrm{y} = 0.45$ and $\mathrm{x} = 1.75$ is plotted 
in Figure \ref{Fig:NN_Plume_subsurface_cC_sectional_views}.

Figures \ref{Fig:NN_Plume_subsurface_integral_FG_over_y} and 
\ref{Fig:NN_Plume_subsurface_integral_C_over_y} show the variation 
of the total concentration, $\int c(\mathrm{x},\mathrm{y}) \; 
\mathrm{d} \mathrm{y}$, along the x-direction for the invariants 
and the product. The total concentration for a given cross-section 
is a useful measure in the studies on reactive-transport. The results 
are shown for various mesh refinements, and the Galerkin formulation 
predicts unphysical negative values even for this important global 
measure. On the other hand, the proposed computational framework 
predicts physically meaningful for the considered measure. 
Figures \ref{Fig:NN_Plume_subsurface_F_Lagrange_contours} and 
\ref{Fig:NN_Plume_subsurface_G_Lagrange_contours} show the 
contours of the Lagrange multipliers in solving quadratic 
programming problems to obtain the invariants $F$ and $G$. 
The Lagrange multipliers arise due to the enforcement of 
the lower and upper bounds due to the non-negative constraint 
and the maximum principle in calculating the invariants. It 
should be noted that, based on the Karush-Kuhn-Tucker results 
in the theory of optimization, the Lagrange multipliers are 
non-negative \cite{Boyd_convex_optimization}. 

\subsection{Plume formation due to multiple stationary point sources}
The problem is pictorially described in Figure 
\ref{Fig:Plume_Multiple_PointSources}. The following 
rotated heterogeneous diffusivity tensor is employed:
\begin{align}
  \mathbf{D}(\mathbf{x}) = \mathbf{R} \mathbf{D}_{0}
  (\mathbf{x)} \mathbf{R}^{\mathrm{T}}
\end{align}
where $\mathbf{D}_0(\mathbf{x})$ is 
\begin{align}
  \mathbf{D}_0(\mathbf{x}) = 
  \left(\begin{array}{cc}
    \mathrm{y}^2 + \epsilon \mathrm{x}^2 & - (1 - \epsilon) 
    \mathrm{x} \mathrm{y} \\
    -(1 - \epsilon) \mathrm{x} \mathrm{y} & \epsilon 
    \mathrm{y}^2 + \mathrm{x}^2 \\
  \end{array}\right) 
\end{align}
and the rotation tensor is same as before (given by equation 
\eqref{Eqn:NN_rotation_tensor}) with angle $\theta = \pi/3$. 
For this problem, the parameter $\epsilon$ is taken as $0.001$. 
The stoichiometry coefficients are taken to be $n_A = 1$, $n_B 
= 1$ and $n_C = 2$. 
Figures \ref{Fig:NN_Plume_PointSources_contour_F}--\ref{Fig:NN_Plume_PointSources_contour_C} show the contours of the concentrations of the 
invariants and the product under the Galerkin formulation and 
the proposed computational framework. 
Figure \ref{Fig:NN_Plume_PointSources_integral_C_over_y} shows 
the variation of the integrated concentration of the product 
with respect to $\mathrm{y}$ (i.e., $\int c(\mathrm{x},\mathrm{y}) 
\; \mathrm{d} \mathrm{y}$) along $\mathrm{x}$, and the Galerkin 
formulation dramatically predicts negative values even for this 
average. Table \ref{Table:NN_PointSources_min_max} shows that 
small violations in the non-negative constraint for invariants 
can lead to bigger violations in the non-negative constraint 
for the product $C$.
Figure \ref{Fig:NN_Plume_PointSources_CPU_timing} compares 
the CPU time taken by the Galerkin formulation and the 
proposed computational framework. Even for a mesh with 
more than 40,000 nodes, the CPU time taken by the proposed 
framework is only 1.07 times the CPU time taken by the 
Galerkin formulation. For this mesh, 50.51\% of the nodes 
have violated the non-negative constraint for the product 
under the Galerkin formulation. On the other hand, the 
nodal concentrations of the product under the proposed 
computational framework are physically meaningful 
non-negative values. 

\begin{table}[ht]
  \centering
  \caption{Plume formulation due to multiple stationary point sources: 
    This table shows the minimum concentration, maximum concentration, 
    and \% of nodes violated the non-negative constraint under the 
    Galerkin formulation. \emph{The values clearly indicate that small 
    violations in meeting the non-negative constraint for invariants 
    can result in much bigger violation of the non-negative 
    constraint for the product. It is important to note that 
    Ciarlet and Raviart \cite{Ciarlet_Raviart_CMAME_1973_v2_p17} 
    have shown that the Galerkin single-field formulation, in 
    general, does not converge \emph{uniformly} for diffusion-type 
    equations. That is, the numerical solutions under the Galerkin 
    formulation for diffusion-type equations converge in $L_2$ 
    norm but need not converge in $L^{\infty}$ norm.} 
    \label{Table:NN_PointSources_min_max}}
  \begin{tabular}{|c|c|c|c|c|} \hline
    \multirow{2}{*}{Mesh} & \multicolumn{2}{|c|}{Invariant F} & 
    \multicolumn{2}{|c|}{Invariant G} \\ 
    \cline{2-3} \cline{4-5}
    & $\frac{\mbox{Min. conc.}}{\mbox{Max. conc.}} \times 100$ 
    & nodes violated & $\frac{\mbox{Min. conc.}}{\mbox{Max. conc.}} 
    \times 100$ & nodes violated  \\ \hline
    $21 \times 21$ & $\frac{-1.50\times10^{-3}}{1.42\times10^{0}} \times 100 = -0.11\%$ 
    & 8.62\% & $\frac{-2.07\times10^{-3}}{3.00\times10^{-1}} \times 100 = -0.69\%$ & 23.36\% \\
    $51 \times 51$ & $\frac{-1.38\times10^{-2}}{2.37\times10^{0}} \times 100 = -0.58\%$ 
    & 24.14\% & $\frac{-8.54\times10^{-3}}{4.99\times10^{-1}} \times 100 = -1.71\%$ & 43.71\% \\
    $101 \times 101$ & $\frac{-3.13\times10^{-2}}{3.38\times10^{0}} \times 100 = -0.93\%$ 
    & 25.30\%  & $\frac{-1.31\times10^{-2}}{7.11\times10^{-1}} \times 100 = -1.84\%$ & 46.53\%  \\
    $161 \times 161$ & $\frac{-4.45\times10^{-2}}{4.24\times10^{0}} \times 100 = -1.05\%$ 
    & 25.77\%  & $\frac{-1.52\times10^{-2}}{8.96\times10^{-1}} \times 100 = -1.70\%$ & 45.20\%  \\
    $201 \times 201$ & $\frac{-4.98\times10^{-2}}{4.71\times10^{0}} \times 100 = -1.06\%$ 
    & 27.29\%  & $\frac{-1.58\times10^{-2}}{9.98\times10^{-1}} \times 100 = -1.58\%$ & 40.83\%  \\ \hline
  \end{tabular}
  \begin{tabular}{|c|c|c|} \hline
    \multirow{2}{*}{Mesh} & \multicolumn{2}{|c|}{Product C} \\ 
    \cline{2-3}
    & $\frac{\mbox{Min. conc.}}{\mbox{Max. conc.}} \times 100$ 
    & nodes violated  \\ \hline
    $21 \times 21$ & $\frac{-4.13 \times 10^{-3}}{1.22 \times 10^{-1}} \times 100 = -3.38\%$ & 31.29\% \\ 
    $51 \times 51$ & $\frac{-2.75 \times 10^{-3}}{1.83 \times 10^{-1}} \times 100 = -15.05\%$ & 53.94\% \\ 
    $101 \times 101$ & $\frac{-6.25 \times 10^{-2}}{2.26 \times 10^{-1}} \times 100 = -27.70\%$ & 51.02\% \\ 
    $161 \times 161$ & $\frac{-8.90 \times 10^{-2}}{2.48 \times 10^{-1}} \times 100 = -35.93\%$ & 54.94\% \\ 
    $201 \times 201$ & $\frac{-9.96 \times 10^{-2}}{2.56 \times 10^{-1}} \times 100 = -38.90\%$ & 50.51\% \\ \hline
  \end{tabular}
\end{table}

\subsection{Diffusion and reaction of an initial slug}
The test problem is pictorially described in Figure \ref{Fig:Plume_Slug}. 
The initial conditions within the reaction slug are assumed to be 
$c_A^{0} = 10$, $c_B^{0} = 0$, and $c_C^{0} = 0$. In the other parts 
of the rectangular domain we have $c_A^{0} = c_B^{0} = c_C^{0} = 0$. 
The boundary conditions for the chemical species on the rectangular 
domain are assumed to be $c_A^{\mathrm{p}} = 0$, $c_B^{\mathrm{p}} = 
1 - \mathrm{e}^{-t}$, and $c_C^{\mathrm{p}} = 0$. The problem is solved 
within the time interval $t \in [0,1]$. The diffusivity tensor 
within the domain is taken as $\mathbf{D}(\mathbf{x}) = \mathbf{R} 
\mathbf{D}_{\mathrm{subsurface}} \mathbf{R}^{\mathrm{T}}$ with $\theta 
= \frac{\pi}{6}$. The stoichiometric coefficients are $n_A = 2$, $n_B 
= 2$, and $n_C = 1$. Numerical simulations are performed using 
structured mesh with $101$ nodes along each side of the domain 
using four-node quadrilateral elements. The volumetric source 
of each species $A$, $B$, and $C$ is equal to zero inside the 
rectangular domain. 
Two different time steps are employed in the numerical 
simulations: $\Delta t = 0.05 \; \mathrm{s}$ and $\Delta 
t = 1.0 \; \mathrm{s}$. 
Figures \ref{Fig:NN_Slug_C_t_dot05} and \ref{Fig:NN_Slug_C_t_1} 
compare the concentrations of the product $C$ for these time 
steps under the Galerkin single-field formulation and the 
proposed numerical framework. 
As evident from these figures, even for this transient problem, 
the proposed numerical framework produced physically meaningful 
non-negative values for the concentration of the product at all 
time levels. The violation of the non-negative constraint under 
the Galerkin single-field formulation is greater for smaller 
time steps and for smaller time levels. 
Figure \ref{Fig:NN_Slug_QP_iterations} shows the number of 
iterations taken by the quadratic programming solver at each 
time level to obtain concentrations of the invariants $F$ and 
$G$. For this transient problem, the CPU times taken by the 
proposed computational framework and the Galerkin formulation 
are $1516.83$ and $1486.19$. That is, the computational 
overhead of the proposed computational framework is $2\%$ 
when compared with the Galerkin formulation. The elapsed 
CPU time is measured using MATLAB's built-in \textsf{tic-toc} 
feature.

\section{CLOSURE}
\label{Sec:S6_NN_Closure} 
We have presented a robust and computationally efficient non-negative 
framework for fast bimolecular diffusive-reactive systems. We have 
rewritten the governing equations for the concentration of reactants 
and product in terms of invariants which are unaffected by the 
underlying reaction. Because of this algebraic transformation, 
instead of solving three coupled (nonlinear) diffusion-reaction 
equations, we need to solve only two uncoupled linear diffusion 
equations. This will considerably decrease the computational 
cost and also avoid numerical challenges due to nonlinear 
terms. One of the main findings of this paper is that a small 
violation of the non-negative constraint and maximum 
principle in the calculation of invariants can result 
in bigger errors in the prediction of the concentration 
of the product.  

Using several numerical experiments, it has been shown that the 
standard single-field formulation (which is also known as the 
Galerkin formulation) gives unphysical negative values for the 
concentration of the invariants, reactants, and the product. 
The formulation even gives unphysical negative value for 
the average of the concentration of the product, which is 
a useful measure in the prediction of the fate of transport 
of chemical species. 
It is also shown that the clipping procedure (which is 
considered as a variational crime) is not a viable fix 
to obtain the non-negative concentrations, and does not 
give accurate results for diffusive-reactive systems. 
On the other hand, the  proposed computational framework 
provided accurate results, and in all cases the framework 
provided non-negative values for the concentrations of the 
chemical species. The proposed computational framework can 
serve as a robust simulator for anisotropic heterogeneous 
geomodels. A next logical step is to extend the proposed 
computational framework to slow reactions, and 
to more complicated reactions.

\section*{ACKNOWLEDGMENTS}
The first author (K.B.N.) acknowledges the support of the 
National Science Foundation under Grant no. CMMI 
1068181.
The authors also acknowledge the support from the Department of 
Energy through Subsurface Biogeochemical Research Program, 
and SciDAC-2 Project (Grant No. DOE DEFC02-07ER64323). 
Neither the United States Government nor any agency thereof, 
nor any of their employees, makes any warranty, express or 
implied, or assumes any legal liability or responsibility for the 
accuracy, completeness, or usefulness of any information. The 
opinions expressed in this paper are those of the authors and 
do not necessarily reflect that of the sponsors. 

\emph{Author contributions:} Mathematical theory, proofs, 
and algorithm design (K.B.N.), computer implementation 
(K.B.N. \& M.K.M.), designing test problems (K.B.N., M.K.M. 
\& A.J.V.), generation of numerical results and visualization 
(K.B.N. \& M.K.M.), paper writing (K.B.N. \& M.K.M.), and proof 
reading (K.B.N., M.K.M. \& A.J.V.).

\bibliographystyle{unsrt}
\bibliography{Master_References/Master_References,Master_References/Books}
\newpage
\clearpage

\begin{figure}
  \centering
  \includegraphics[clip,scale=0.32]{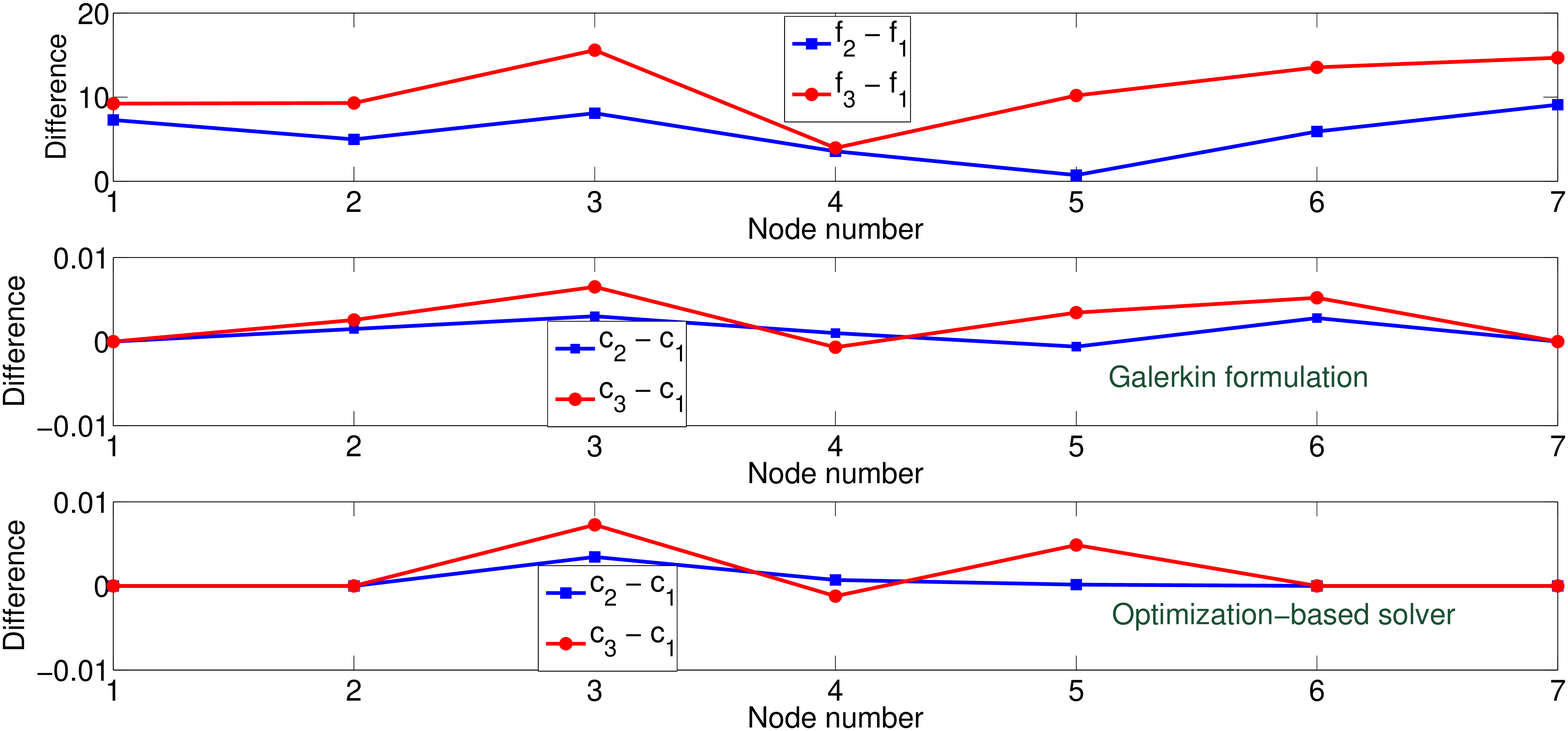}
  \caption{This figure illustrates the violation of the comparison 
    principle by the optimization-based solver. Note that the 
    Galerkin formulation also violates the comparison principle. 
    The problem is a one-dimensional problem with homogeneous 
    Dirichlet boundary conditions. The decay coefficient is 
    taken as $2 \times 10^{4}$, the diffusivity coefficient 
    is taken as unity, and the domain is meshed using six 
    linear finite elements. As evident from the figure, we 
    have $\boldsymbol{f}_3 \succeq \boldsymbol{f}_2 \succeq 
    \boldsymbol{f}_1$ but the obtained numerical solution 
    does \emph{not} obey $\boldsymbol{c}_3 \succeq 
    \boldsymbol{c}_2 \succeq \boldsymbol{c}_1$. 
    \label{Fig:NN_violation_of_DCP}}
\end{figure}

\begin{figure}
  \psfrag{F}{Volumetric source of invariant-$F$ is given by equation 
    \ref{Eqn:Volumetric_Source_Invariant_F}}
  \psfrag{G}{Volumetric source of invariant-$G$ is given by equation 
    \ref{Eqn:Volumetric_Source_Invariant_G}}
  \psfrag{xo}{$\mathbf{x}_{\mathrm{origin}}$}
  \psfrag{Lx}{$L_x$}
  \psfrag{Ly}{\rotatebox{-90}{$L_y$}}
  \psfrag{BC1}{\rotatebox{90}{$c_F^{\mathrm{p}}(\mathbf{x}) = 0$ , 
    $c_G^{\mathrm{p}}(\mathbf{x}) = \cos \left( \frac{\pi y}{2 L_y} \right)$}}
  \psfrag{BC2}{$c_F^{\mathrm{p}}(\mathbf{x}) = \sin \left( \frac{\pi x}{2 L_x} 
    \right)$ , $c_G^{\mathrm{p}}(\mathbf{x}) = 0$}
  \psfrag{BC3}{\rotatebox{-90}{$c_F^{\mathrm{p}}(\mathbf{x}) = \sin \left( 
    \frac{\pi y}{2 L_y} \right)$ , $c_G^{\mathrm{p}}(\mathbf{x}) = 0$}}
  \psfrag{BC4}{$c_F^{\mathrm{p}}(\mathbf{x}) = 0$ , $c_G^{\mathrm{p}}
  (\mathbf{x}) = \cos \left( \frac{\pi x}{2 L_x} \right)$}
  \includegraphics[scale=0.9]{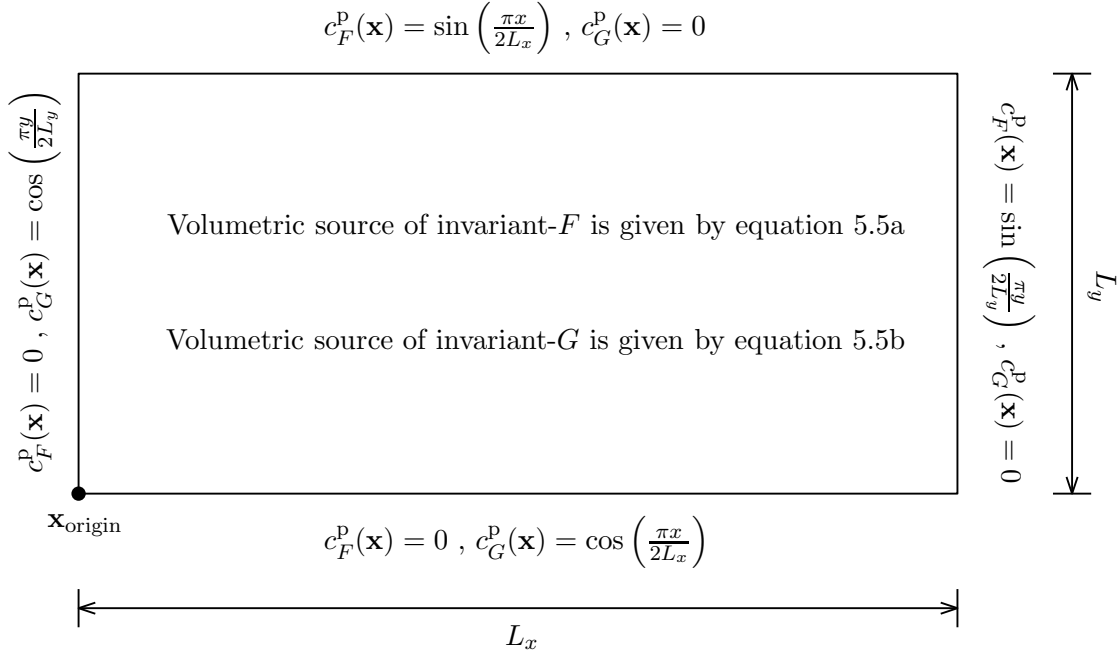}
  \caption{Numerical $h$-convergence study: A pictorial description of the 
    boundary value problem. The computational domain is a rectangle with 
    origin at $\mathbf{x}_{\mathrm{origin}} = (0,0)$. The length and width 
    of the rectangular domain is $L_x = 2$ and $L_y = 1$. The volumetric 
    sources for invariants $F$ and $G$ are prescribed. The Dirichlet boundary 
    conditions $c_F^{\mathrm{p}}(\mathbf{x})$ and $c_G^{\mathrm{p}}(\mathbf{x})$ 
    are prescribed directly by evaluating the expressions given by equations 
    \eqref{Eqn:Analytical_Invariant_F} and \eqref{Eqn:Analytical_Invariant_G} 
    on the boundary.  
    \label{Fig:Convergence_Study_SteadyState}}
\end{figure}

\begin{figure}
  \subfigure{\includegraphics[scale=0.33]
    {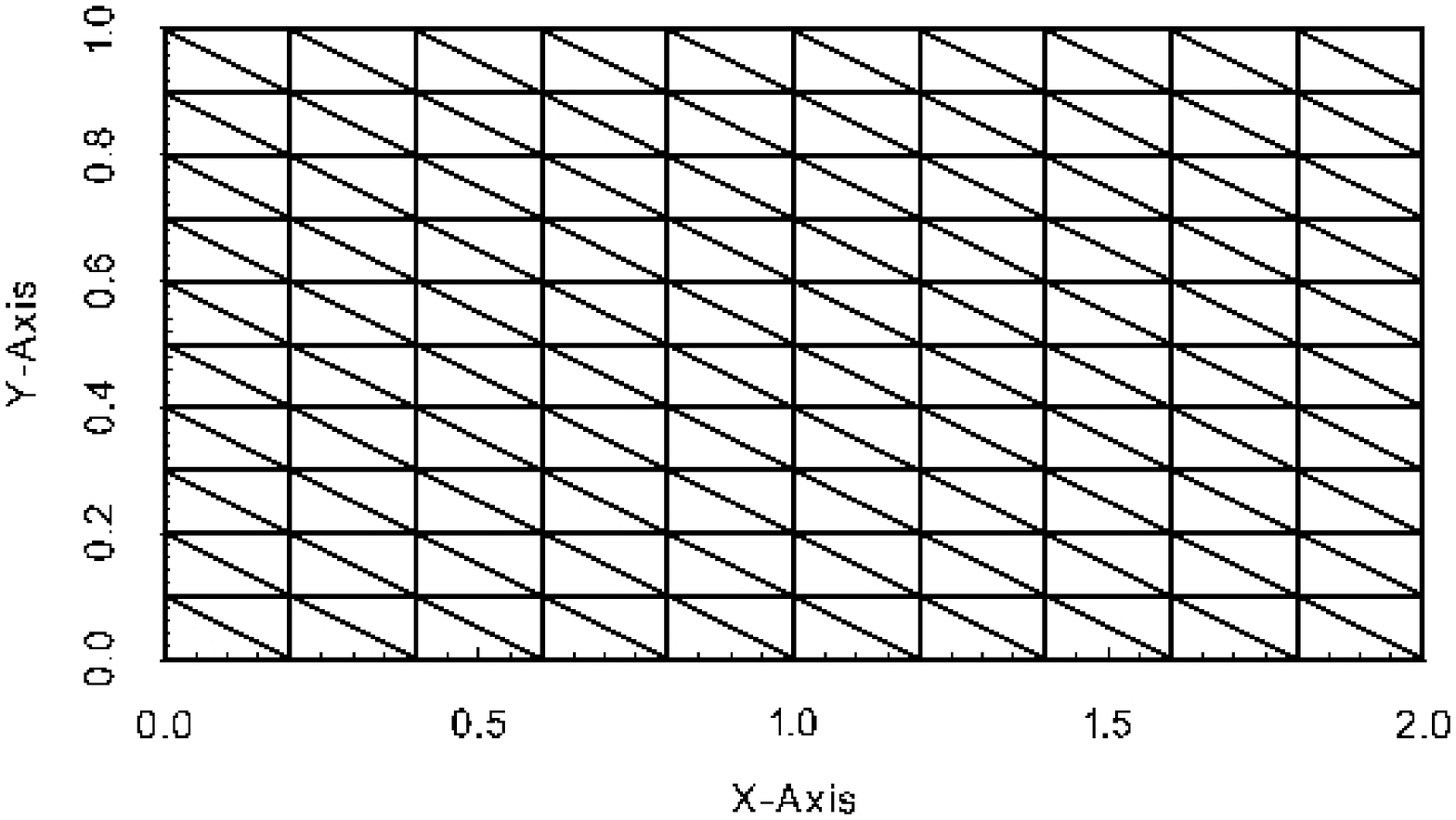}}
  \subfigure{\includegraphics[scale=0.33]
    {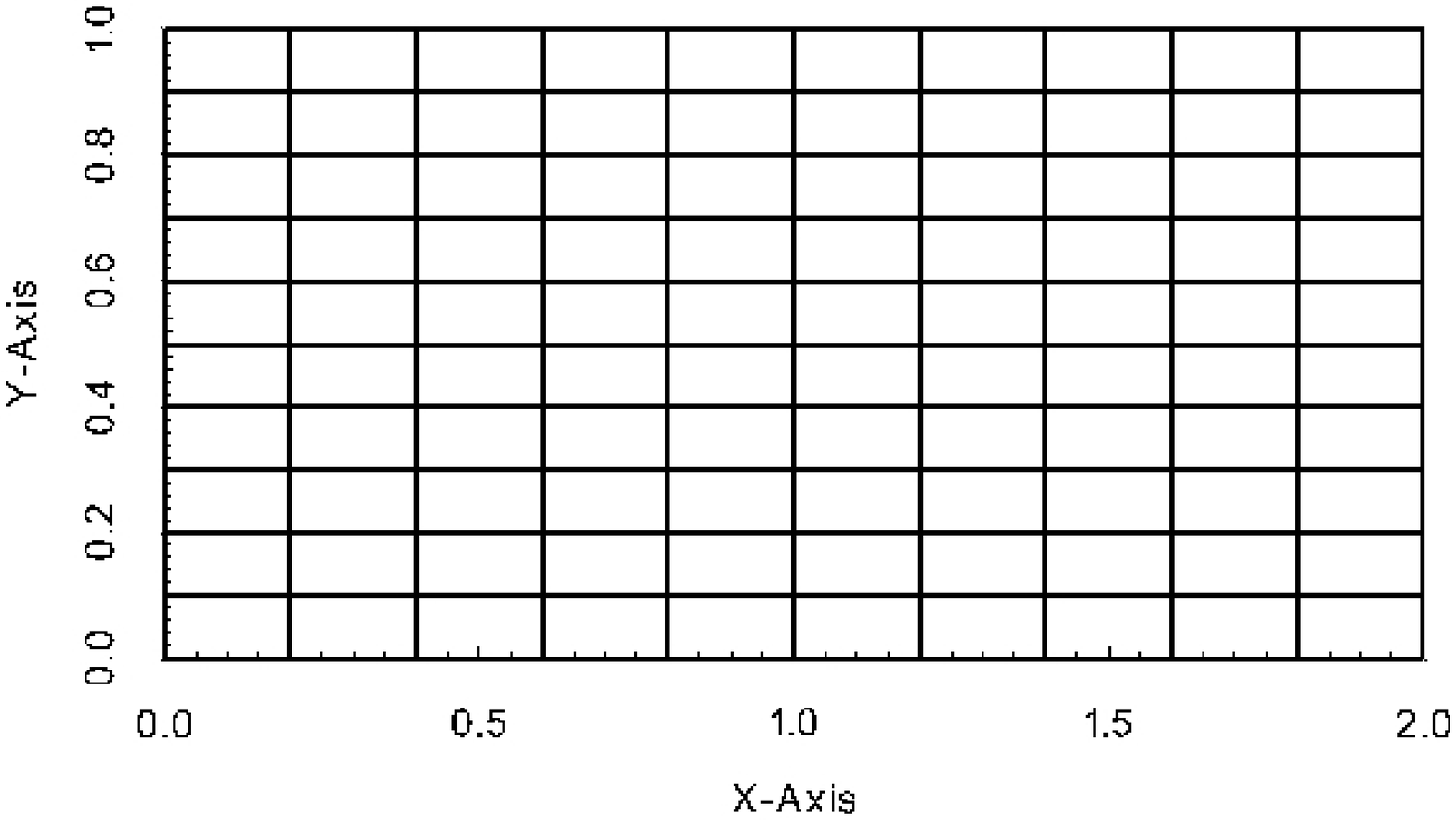}
    \label{Fig:11_Node_Q4_Mesh}}
  \caption{Numerical $h$-convergence study: This figure shows the 
  typical computational meshes (using three-node triangular and 
  four-node quadrilateral elements) employed in the numerical 
  convergence study. Hierarchical meshes are employed in the 
  study. The meshes shown in the figure have $11$ nodes along 
  each side of the domain. Convergence analysis is performed 
  using $11\times11$, $21\times21$, $41\times41$, and $81\times
  81$ meshes. \label{Fig:Hierarchical_Meshes_Convergence}}
\end{figure}

\begin{figure}
  \subfigure{\includegraphics[scale=0.22]
    {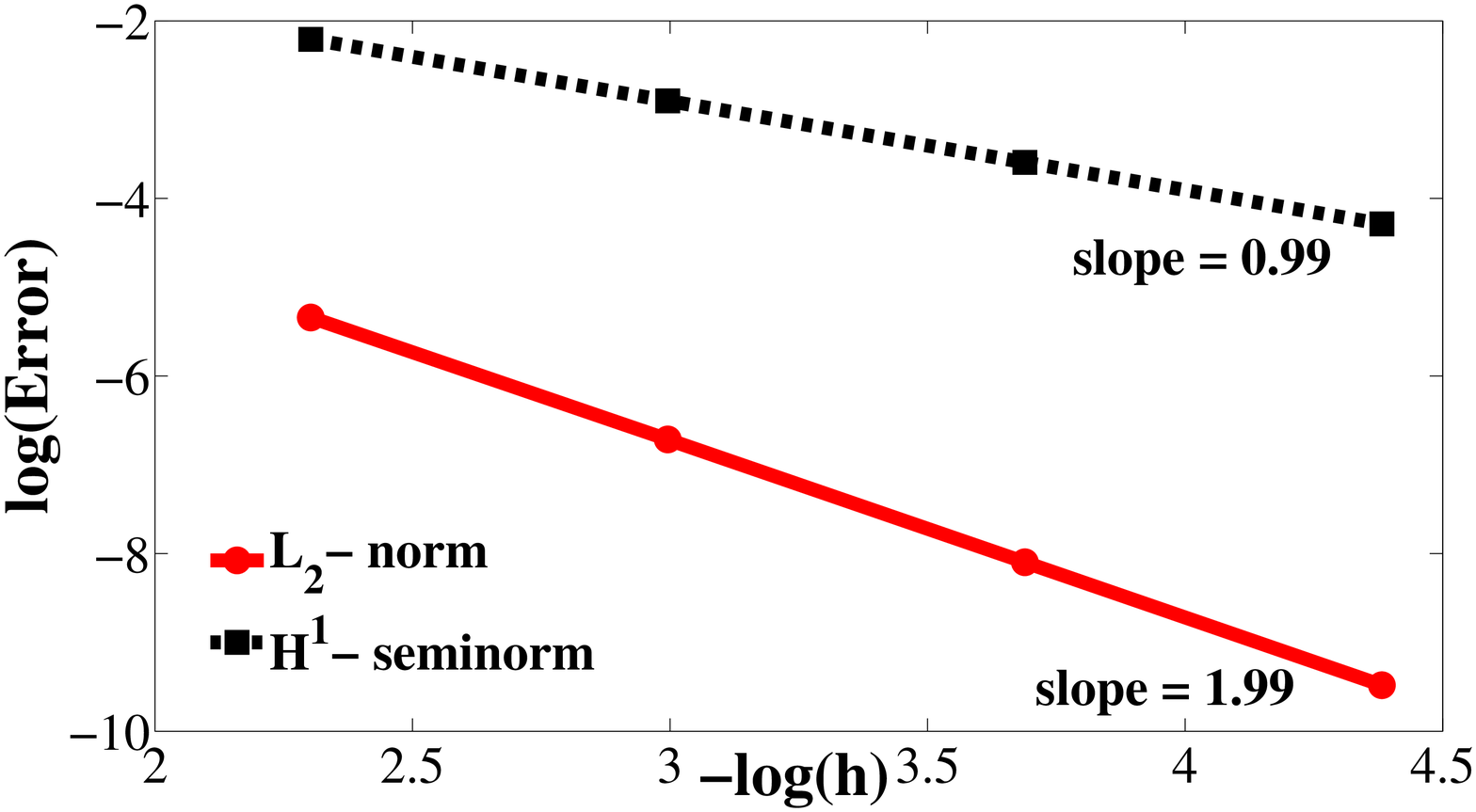}}
  \subfigure{\includegraphics[scale=0.22]
    {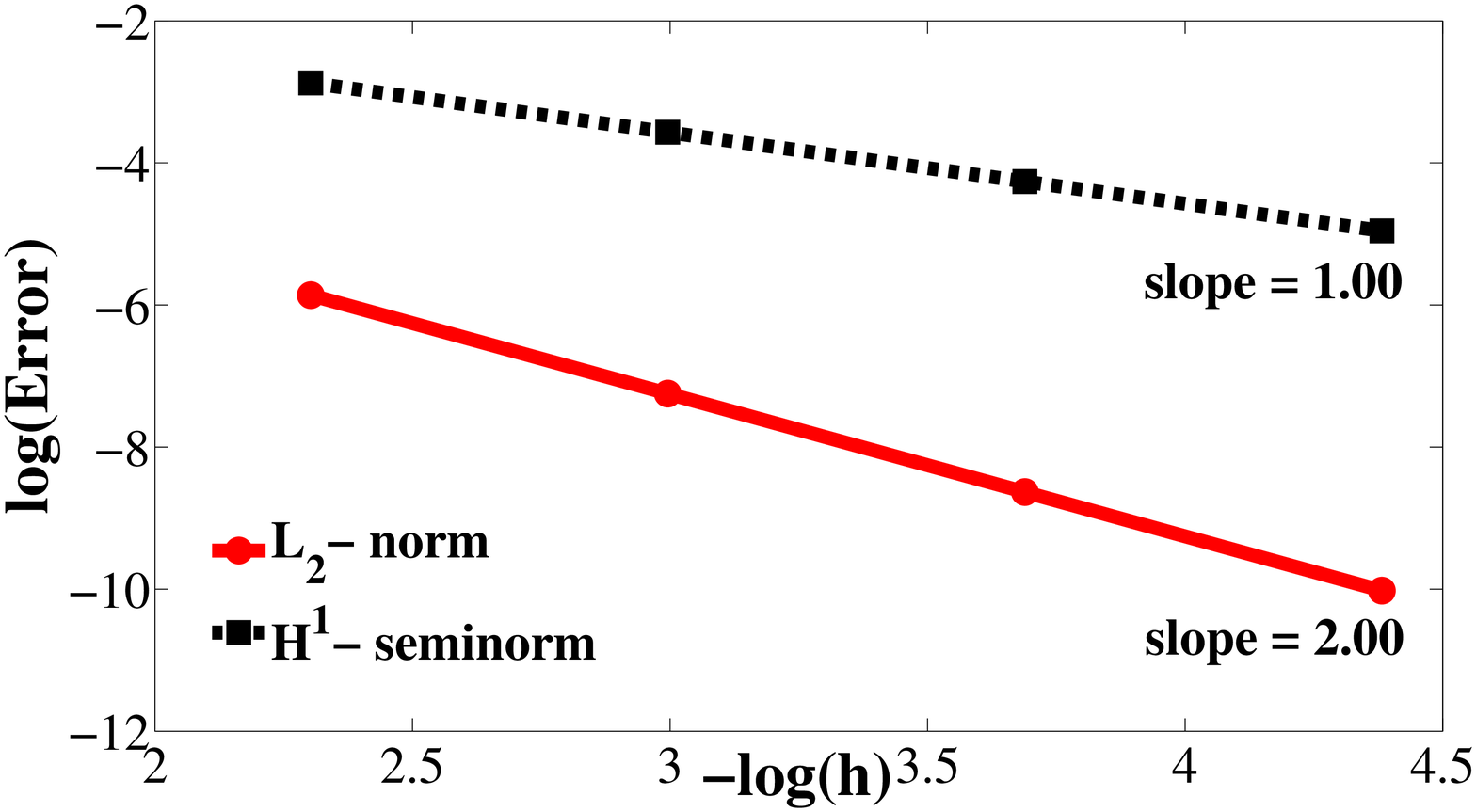}}
  \caption{Numerical $h$-convergence study: This figure shows the variation 
    of error in the concentrations of invariants $F$ and $G$ with respect 
    to mesh refinement. The left figure is for three-node triangular 
    element, and the right figure is for four-node quadrilateral 
    element. The error is calculated in both $L_2$ norm and $H^{1}$ 
    seminorm. Since the obtained values for the error are identical 
    for both $F$ and $G$, only the error for invariant $F$ is shown in the 
    figure. \label{Fig:Bimolecular_h_convergence_of_F_G}}
\end{figure}

\begin{figure}
  \subfigure{\includegraphics[scale=0.22]
    {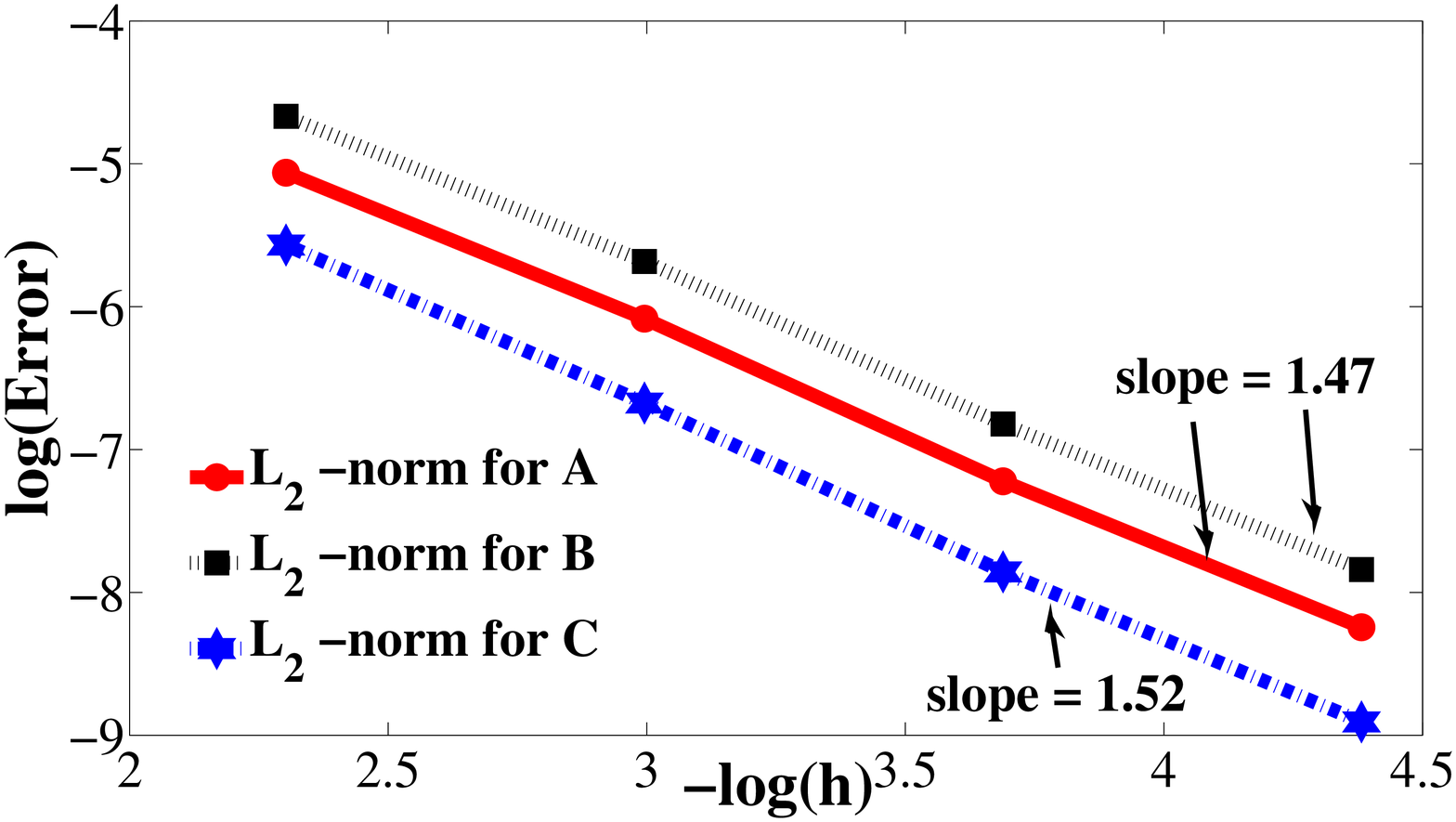}}
  \subfigure{\includegraphics[scale=0.22]
    {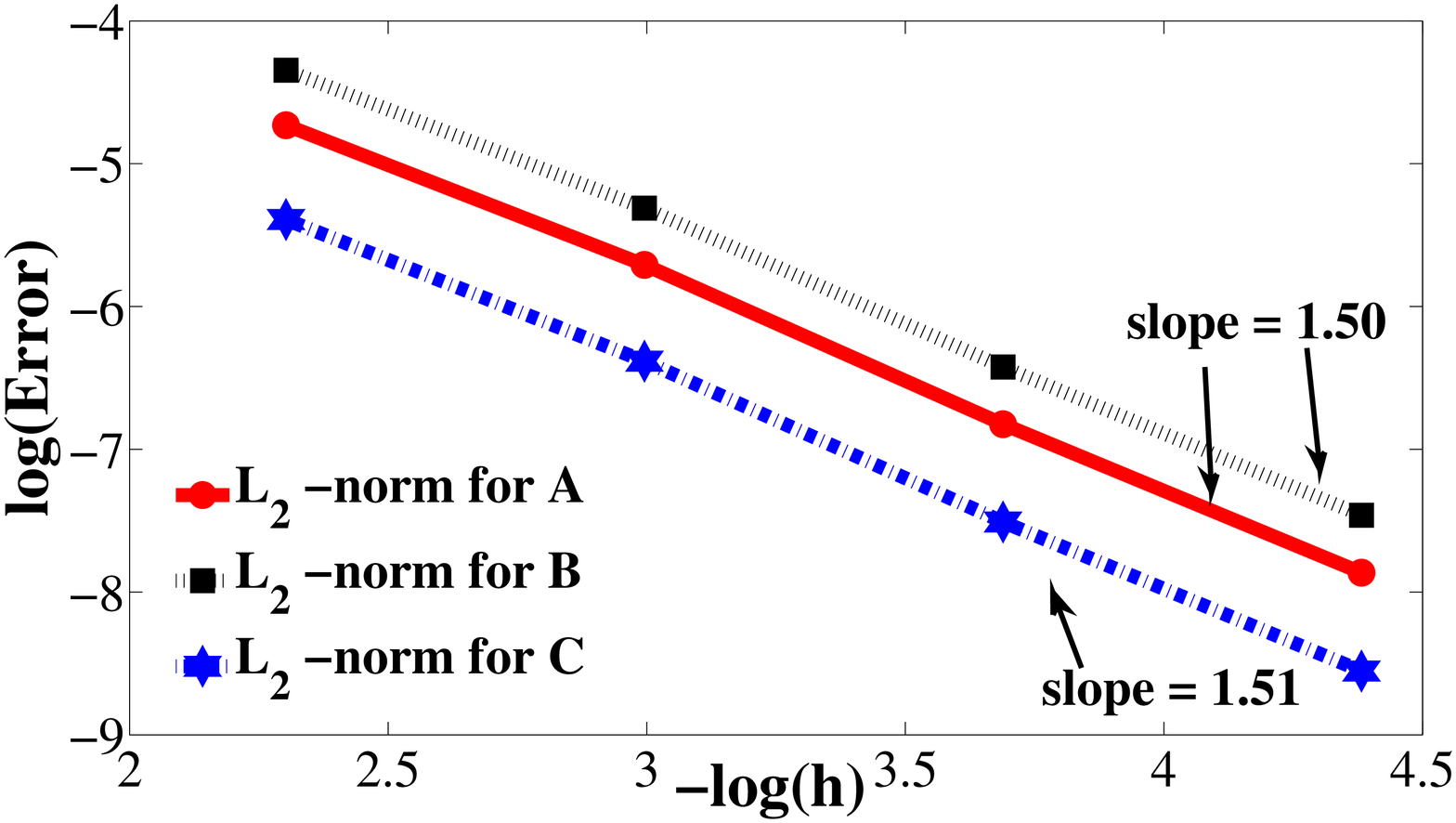}}
  \caption{Numerical $h$-convergence study: This figure 
    shows the variation of error in the concentrations 
    of $A$, $B$, and $C$ with respect to mesh refinement. 
    The left figure is for three-node triangular element, 
    and the right figure is for four-node quadrilateral 
    element. It is evident that the numerical solution 
    obtained from the proposed computational framework 
    converges with respect to mesh refinement. 
    \label{Fig:Bimolecular_h_convergence_of_A_B_C}}
\end{figure}


\begin{figure}
  \psfrag{A}{$A$}
  \psfrag{B}{$B$}
  \psfrag{P}{$\mathrm{P}$}
  \psfrag{Q}{$\mathrm{Q}$}
  \psfrag{cAp}{$c_A^{\mathrm{p}}$}
  \psfrag{cBp}{$c_B^{\mathrm{p}}$}
  \psfrag{Lx}{$L_x$}
  \psfrag{Ly}{$L_y$}
  \psfrag{Ly6}{$L_y/6$}
  \psfrag{x}{$\boldsymbol{x}$}
  \psfrag{y}{$\boldsymbol{y}$}
  \psfrag{zero}{zero flux}
  \psfrag{D}{Dirichlet BC}
  \includegraphics[scale=0.6]{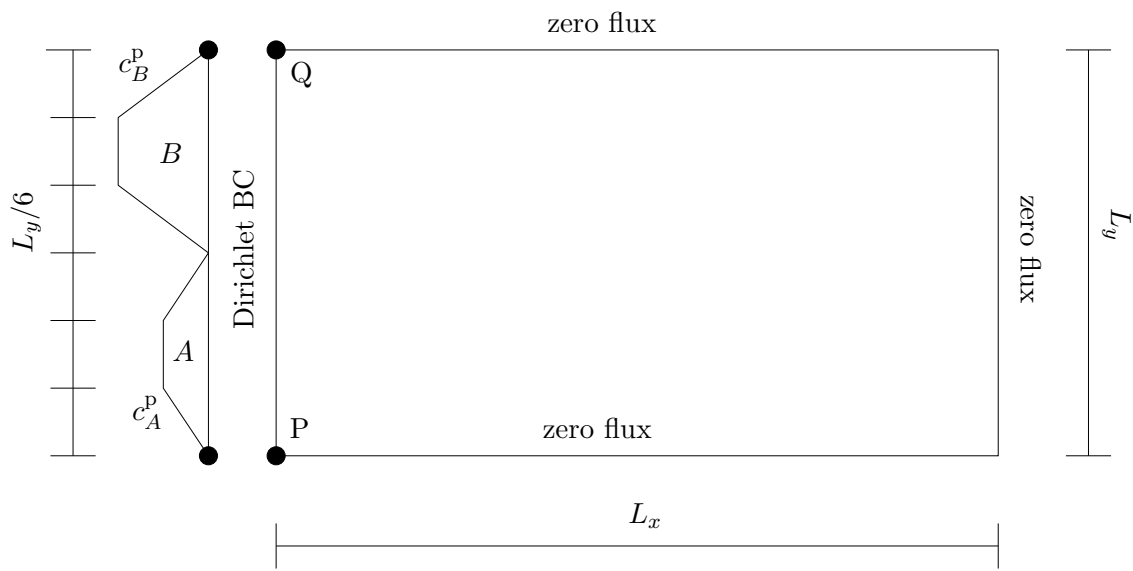}
  \caption{Plume development from boundary in a reaction tank: A 
    pictorial description of boundary value  problem. The computational 
    domain is a rectangle with reactants $A$ and $B$ are pumped on the 
    left face of the domain, which are modeled as Dirichlet boundary 
    conditions. On the remaining faces of the domain, no flux (i.e., 
    Neumann) boundary condition is applied. Dirichlet boundary 
    conditions for $A$ and $B$ are also indicated in the figure. 
    \label{Fig:Reaction_tank}}
\end{figure}

\begin{figure}
  \includegraphics[scale=0.75]{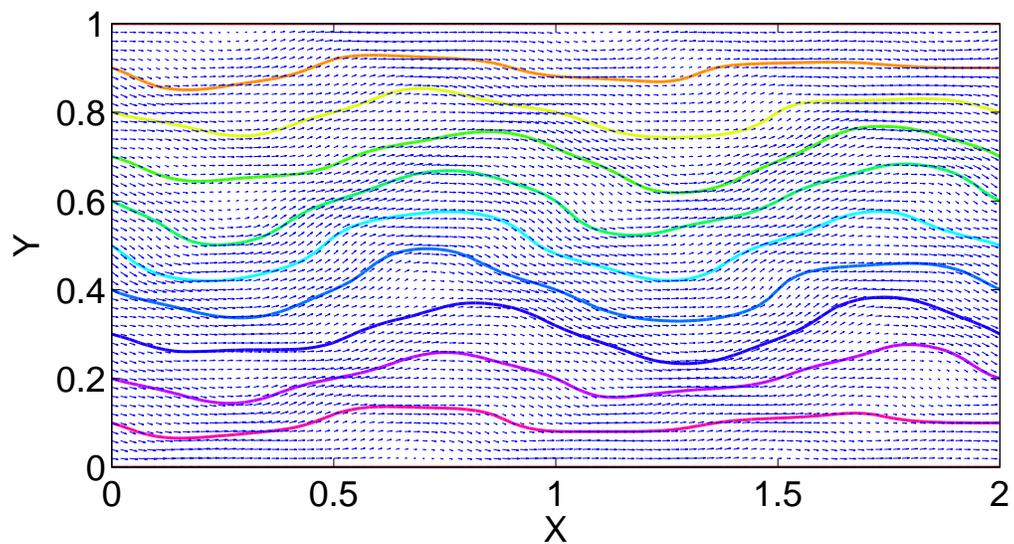}
  \caption{This figure shows the contours of the multi-mode stream function 
  given by equation \eqref{Eqn:NN_Plume_subsurface_stream_function}, 
  and corresponding vector field given by equation
  \eqref{Eqn:NN_Plume_subsurface_velocity_vector_field}. It should 
  be emphasized that the velocity should not be associated with the 
  advection velocity, as advection is neglected in this paper. 
  Herein, the velocity (given by equation 
  \eqref{Eqn:NN_Plume_subsurface_velocity_vector_field}) should 
  be interpreted as the principal directions (or eigenvectors) 
  of the diffusivity tensor (which is given by equation 
  \eqref{Eqn:NN_subsurface_D}). Since the streamlines do 
  not intersect, the nature of the vector fields can 
  provide insights on the plume of the reactants and 
  the product. \label{Fig:NN_multi_mode_stream_function}}
\end{figure}

\clearpage
\newpage

\begin{figure}
  \centering
  \subfigure{
    \includegraphics[clip,scale=0.8]{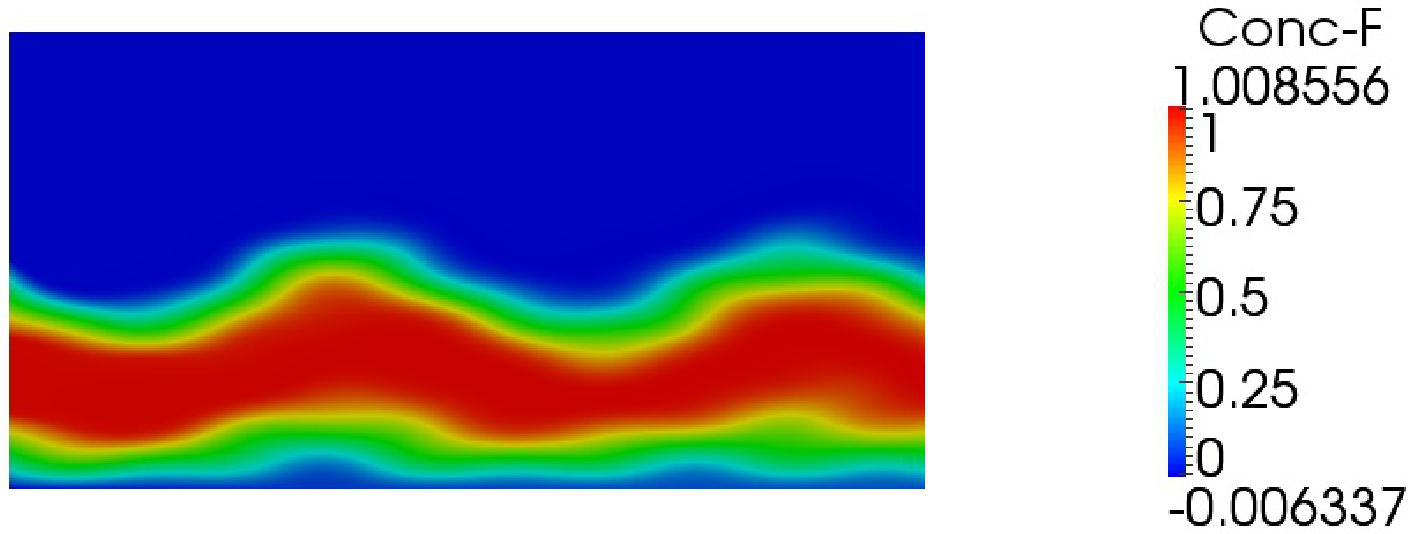}}
  \subfigure{
    \includegraphics[clip,scale=0.8]{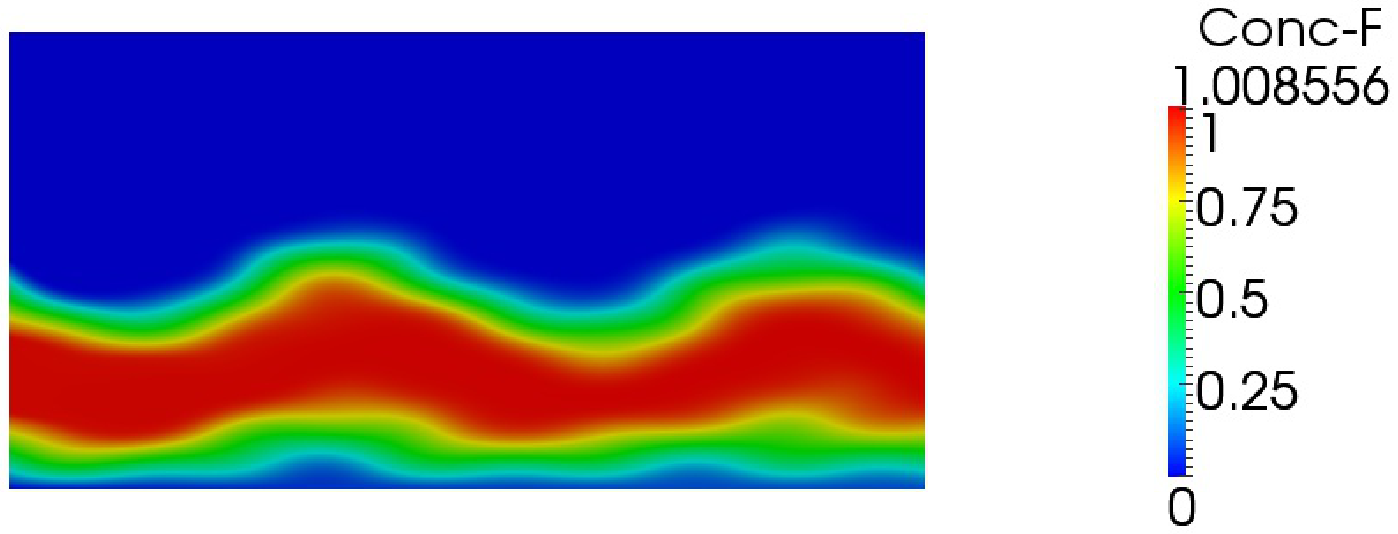}}
  \subfigure{
    \includegraphics[clip,scale=0.8]{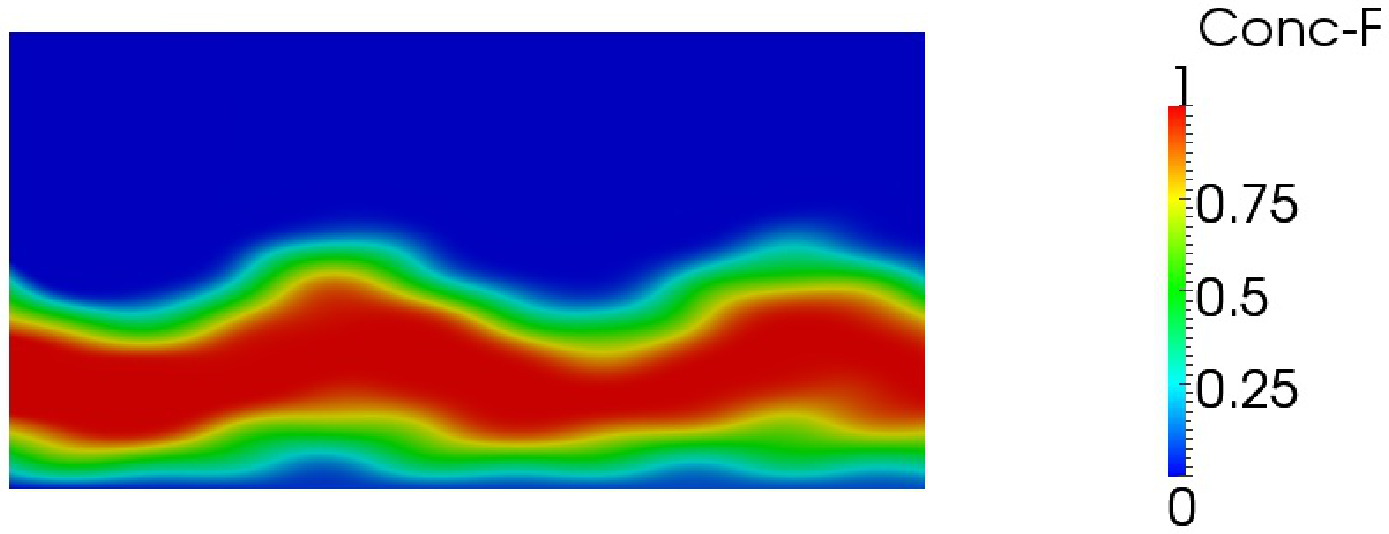}}
  \caption{Plume formation from boundary in a reaction tank: 
    This figure shows the contours of the invariant $F$ 
    obtained using the Galerkin formulation (top figure), 
    the clipping procedure (middle figure), and the proposed 
    computational framework (bottom figure). The concentration 
    of the invariant $F$ should be between $0$ and $1$. The 
    Galerkin formulation violates the non-negative constraint 
    and the maximum principle. 
    On the other hand, by explicitly enforcing the lower and 
    upper bounds, the proposed computational framework satisfies 
    the non-negative constraint and the maximum principle. However, 
    it is noteworthy that the contours of the invariant $F$ under 
    the Galerkin formulation, the clipping procedure and the proposed 
    computational framework all look similar. This is not the 
    case with the contours of the product $C$ (see Figure 
    \ref{Fig:NN_Plume_subsurface_C_Q4_contours}).
    \label{Fig:NN_Plume_subsurface_F_Q4_contours}}
\end{figure}

\begin{figure}
  \centering
  \subfigure{
    \includegraphics[clip,scale=0.8]{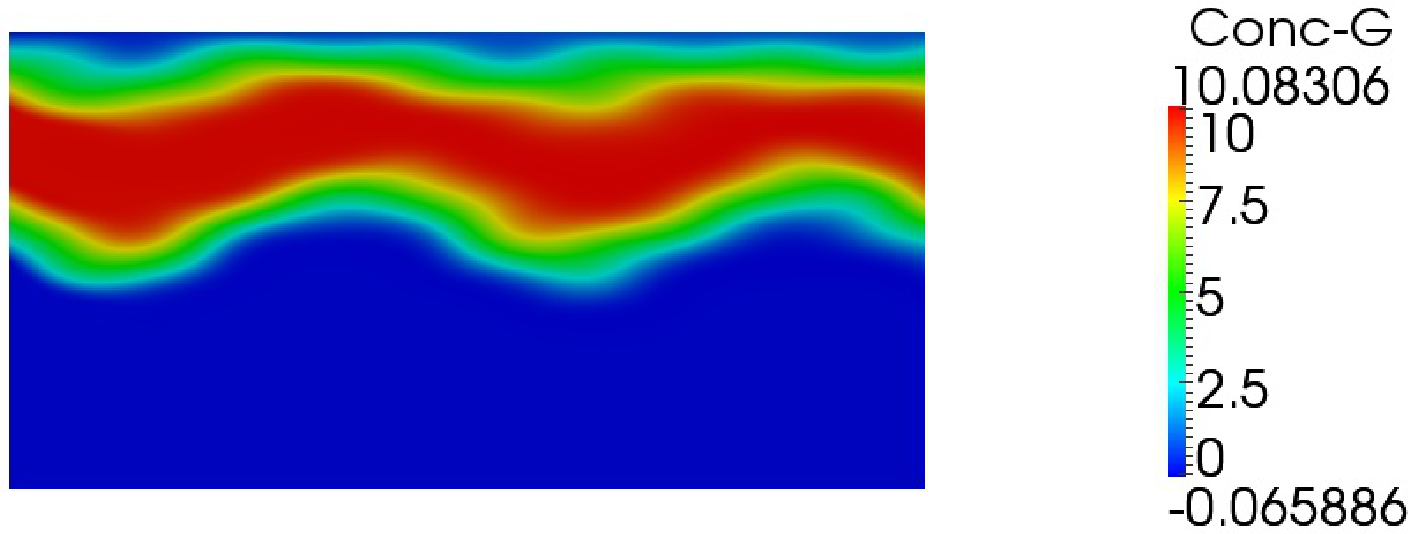}}
  \subfigure{
    \includegraphics[clip,scale=0.8]{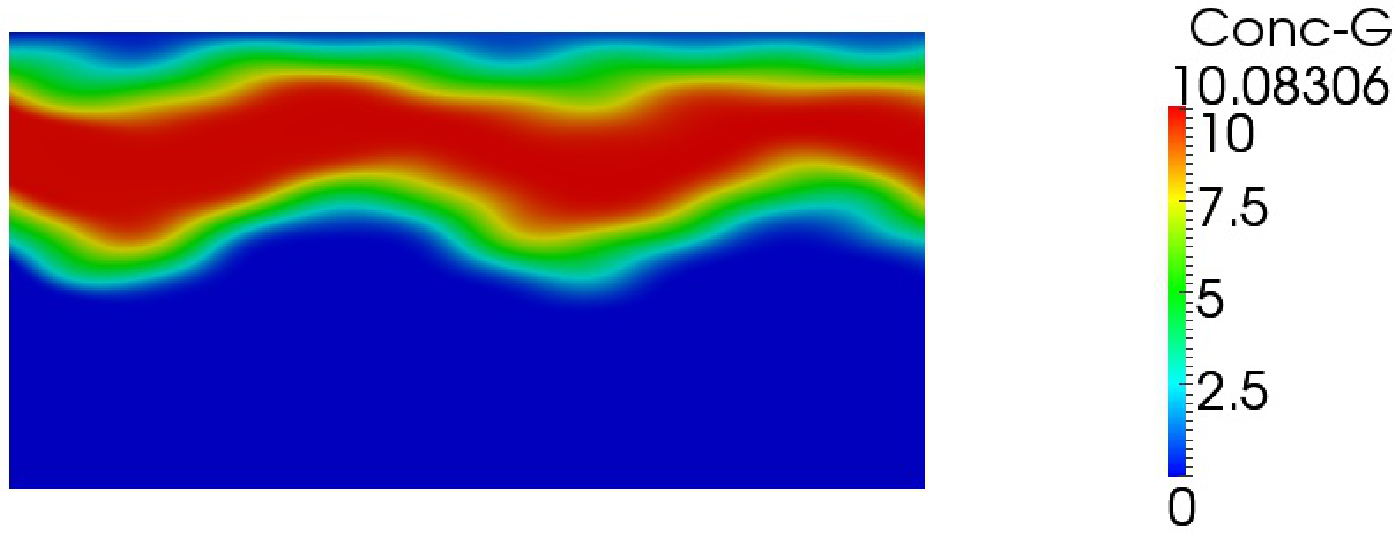}}
  \subfigure{
    \includegraphics[clip,scale=0.8]{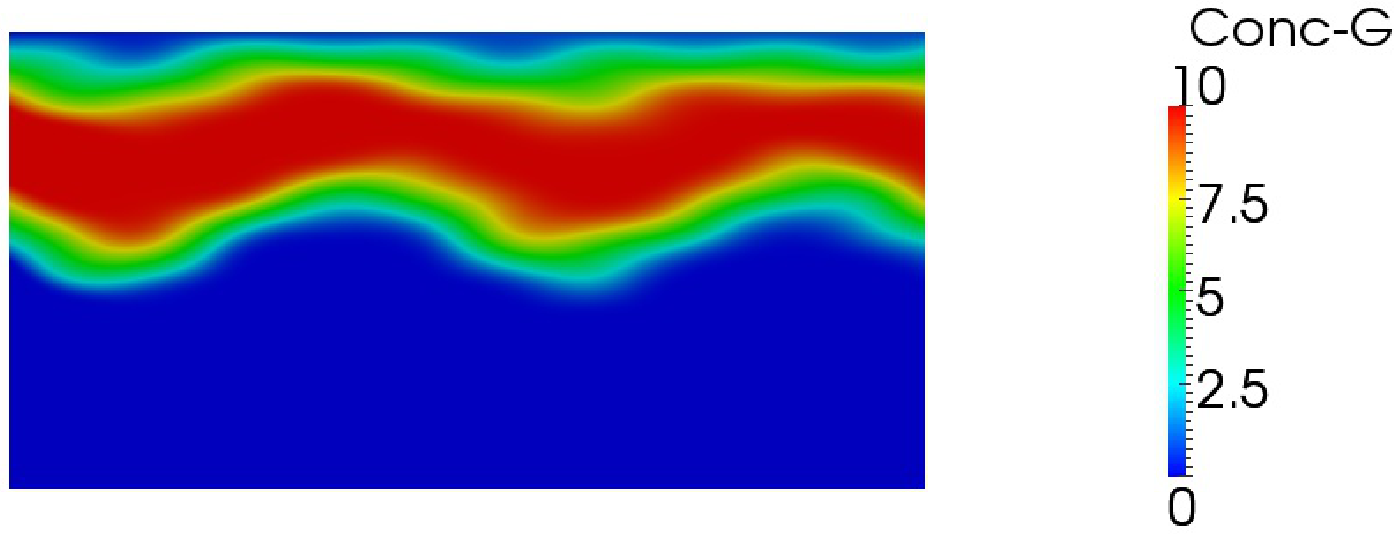}}
  \caption{Plume formation from boundary in a reaction tank: 
    This figure shows the contours of the invariant $G$ 
    obtained using the Galerkin formulation (top figure), 
    the clipping procedure (middle figure), and the proposed 
    computational framework (bottom figure). The concentration 
    of the invariant $G$ should be between $0$ and $10$. The 
    Galerkin formulation violates the non-negative constraint 
    and the maximum principle. 
    On the other hand, by explicitly enforcing the lower and 
    upper bounds, the proposed computational framework satisfies 
    the non-negative constraint and the maximum principle. However, 
    it is noteworthy that the contours of the invariant $G$ under 
    the Galerkin formulation, the clipping procedure and the proposed 
    computational framework all look similar. This is not the 
    case with the contours of the product $C$ (see Figure 
    \ref{Fig:NN_Plume_subsurface_C_Q4_contours}).
    \label{Fig:NN_Plume_subsurface_G_Q4_contours}}
\end{figure}

\begin{figure}
  \centering
  \subfigure{
    \includegraphics[clip,scale=0.8]{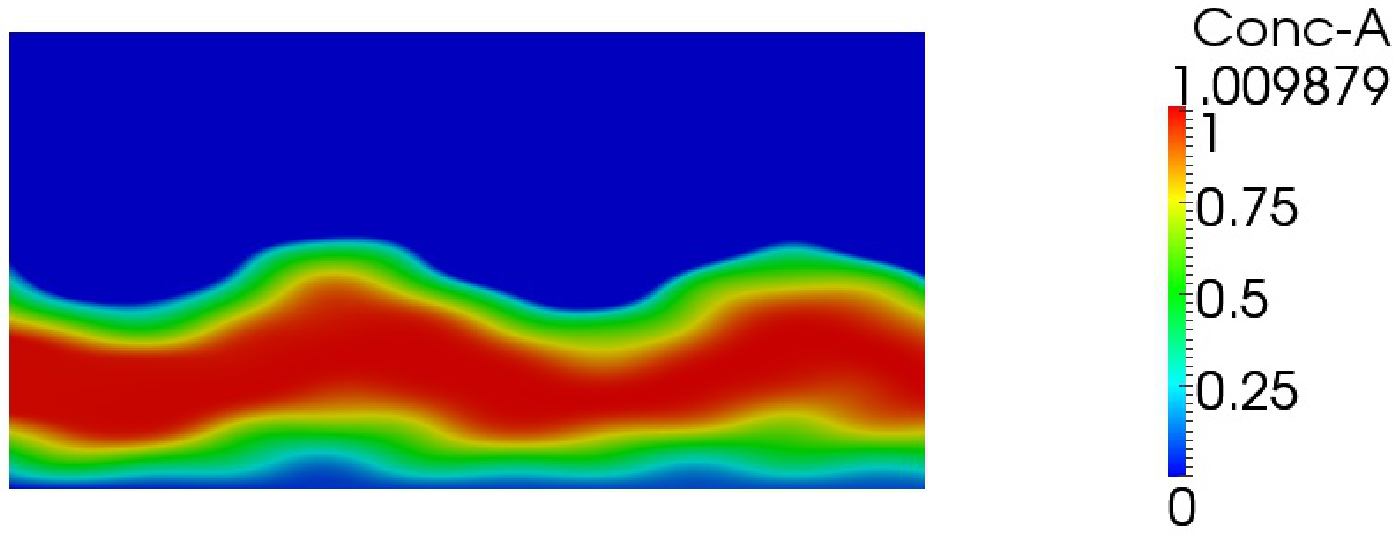}}
  \subfigure{
    \includegraphics[clip,scale=0.8]{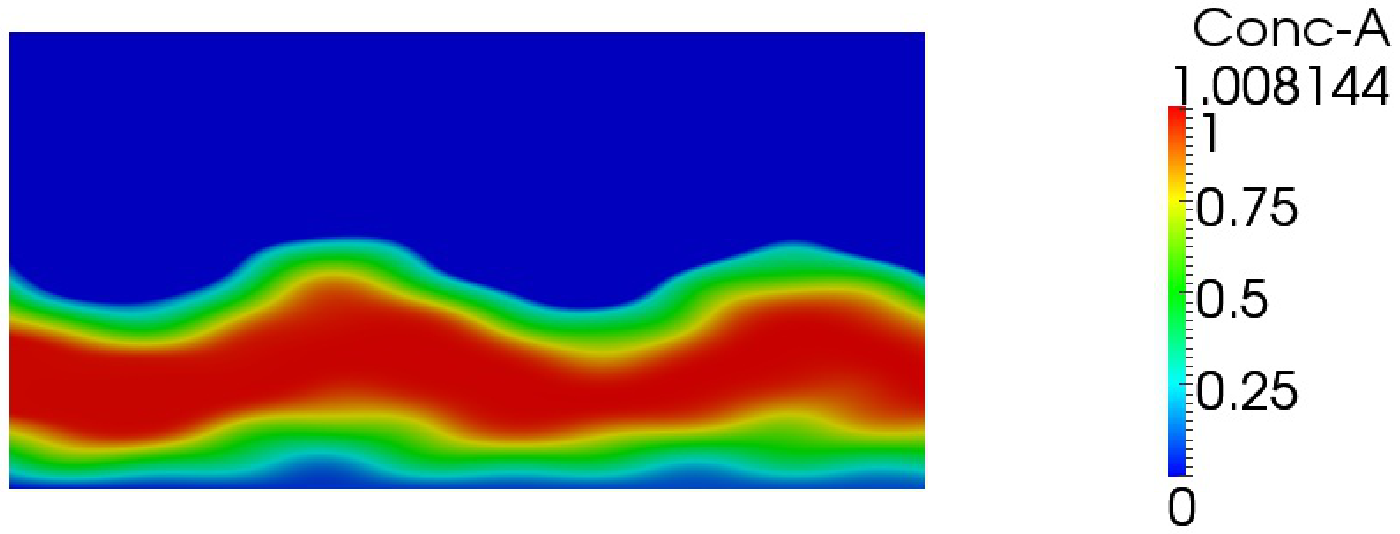}}
  \subfigure{
    \includegraphics[clip,scale=0.8]{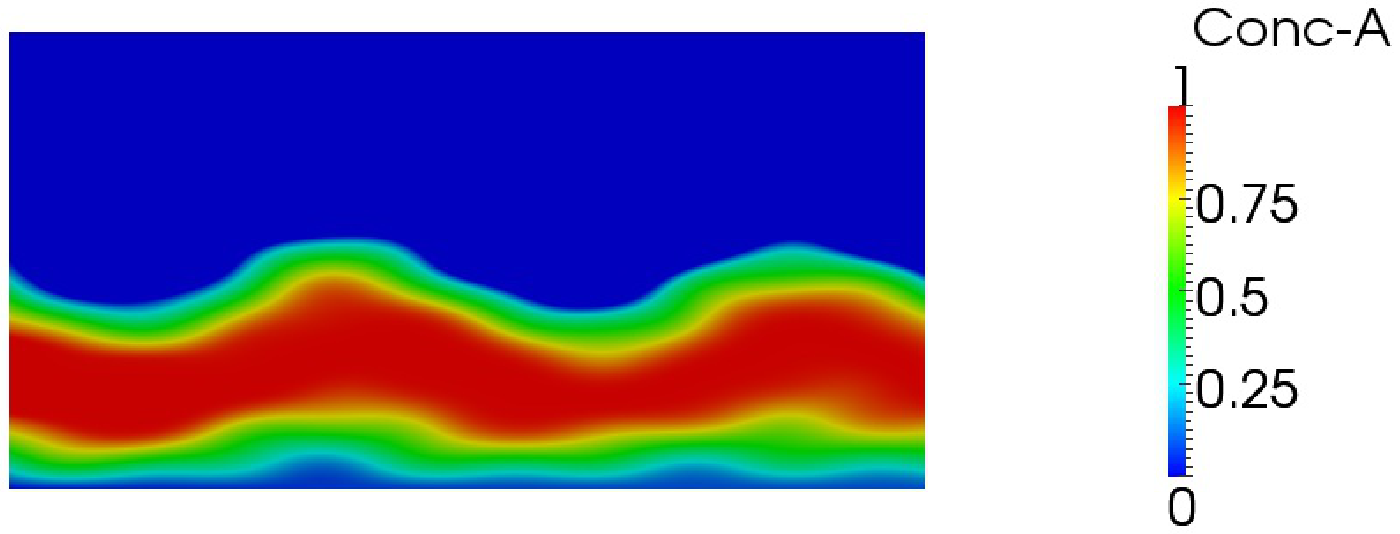}}
  \caption{Plume formation from boundary in a reaction tank: 
    This figure shows the contours of the reactant $A$ obtained 
    using the Galerkin formulation (top figure), the clipping 
    procedure (middle figure), and the proposed computational 
    framework (bottom figure). The concentration of the reactant 
    $A$ should be less than or equal to $1$. 
    The Galerkin formulation violates the maximum principle. 
    On the other hand, the proposed computational framework 
    satisfies the upper bound due to the maximum principle. 
    However, it is noteworthy that the contours of the 
    reactant $A$ under the Galerkin formulation, the 
    clipping procedure and the proposed computational 
    framework all look similar. This is not the case 
    with the contours of the product $C$ (see Figure 
    \ref{Fig:NN_Plume_subsurface_C_Q4_contours}). 
    \label{Fig:NN_Plume_subsurface_A_Q4_contours}}
\end{figure}

\begin{figure}
  \centering
  \subfigure{
    \includegraphics[clip,scale=0.8]{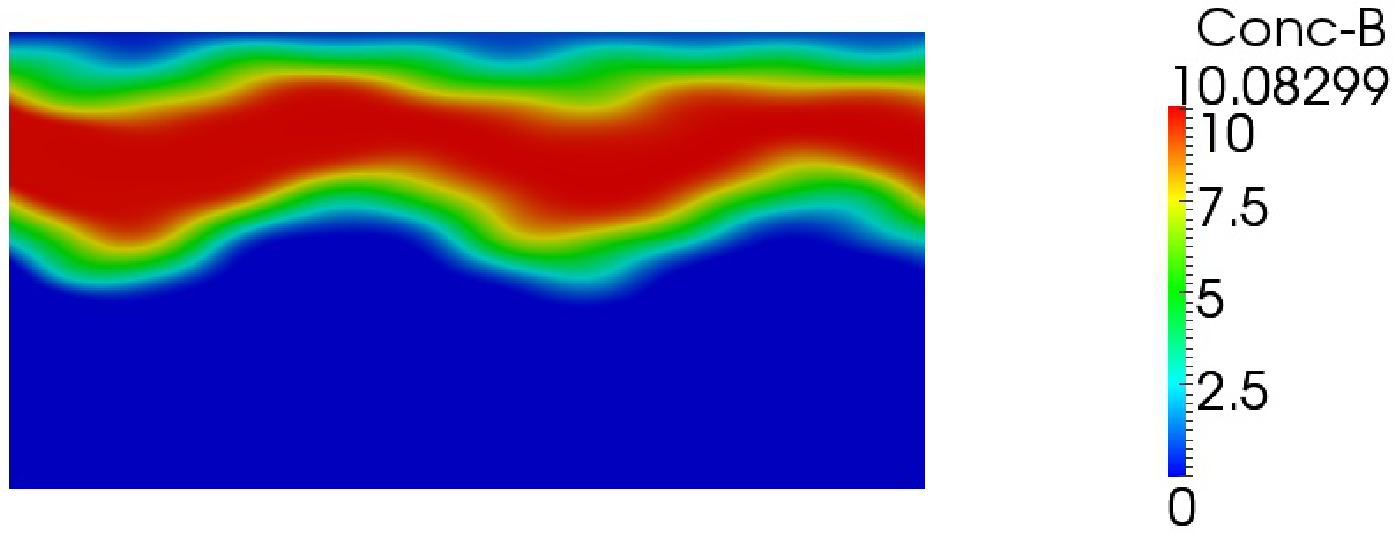}}
  \subfigure{
    \includegraphics[clip,scale=0.8]{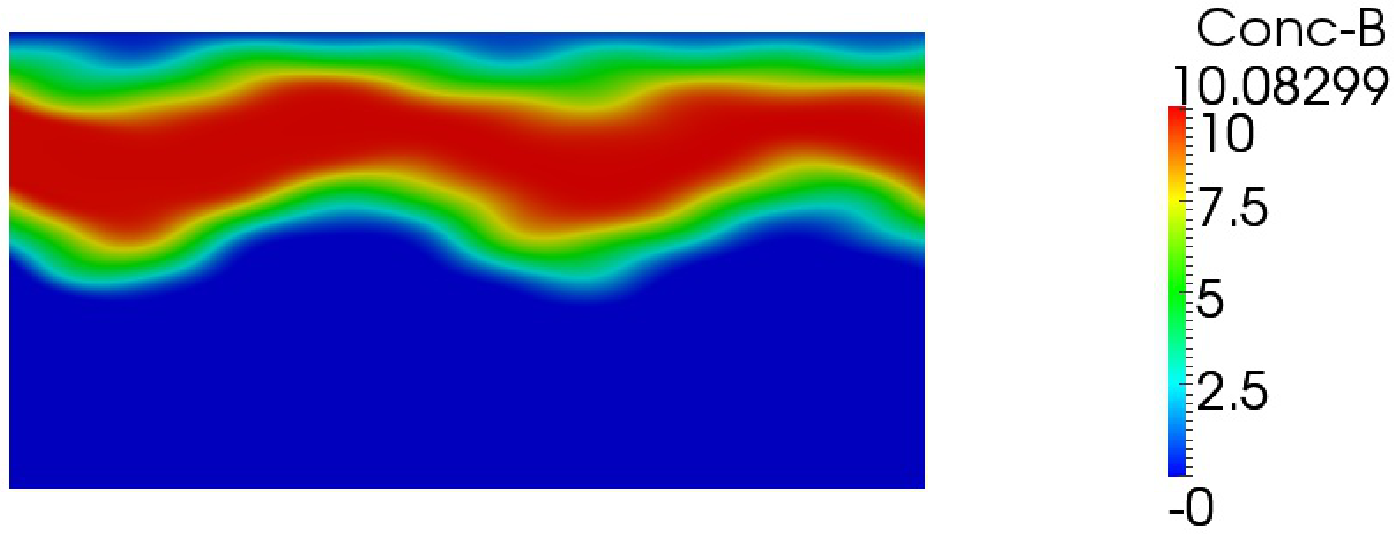}}
  \subfigure{
    \includegraphics[clip,scale=0.8]{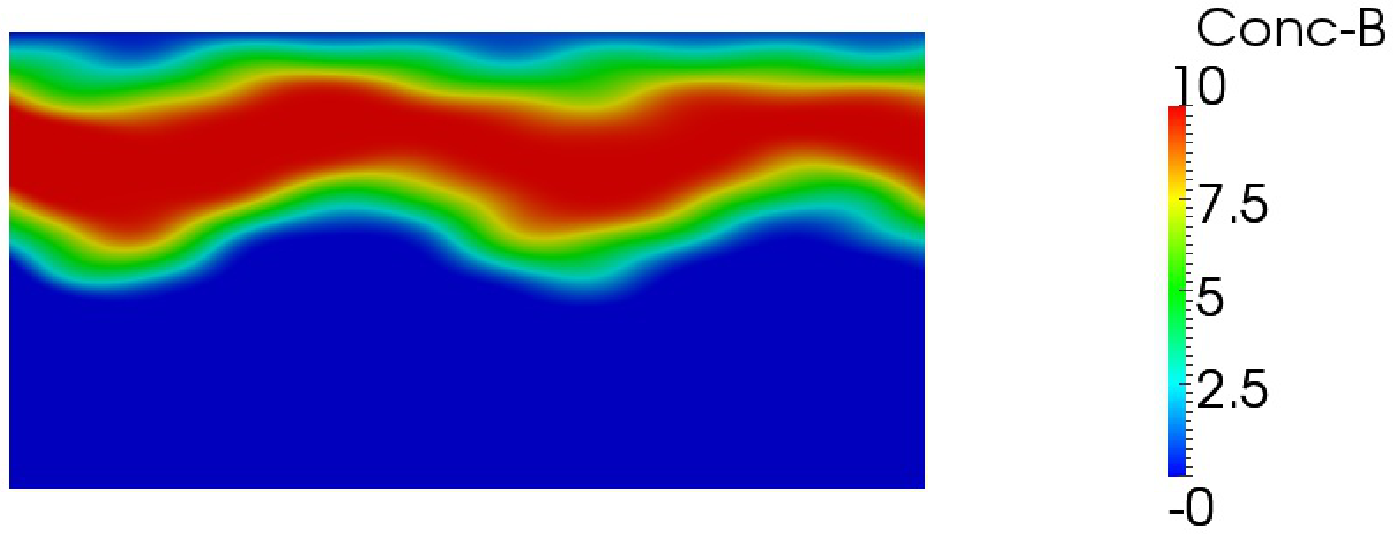}}
  \caption{Plume formation from boundary in a reaction tank: 
    This figure shows the contours of the reactant $B$ obtained 
    using the Galerkin formulation (top figure), the clipping 
    procedure (middle figure), and the proposed computational 
    framework (bottom figure). The concentration of the reactant 
    $B$ should be less than or equal to $10$. 
    The Galerkin formulation violates the maximum principle. 
    On the other hand, the proposed computational framework 
    satisfies the upper bound due to the maximum principle. 
    However, it is noteworthy that the contours of the 
    reactant $B$ under the Galerkin formulation, the 
    clipping procedure and the proposed computational 
    framework all look similar. This is not the case 
    with the contours of the product $C$ (see Figure 
    \ref{Fig:NN_Plume_subsurface_C_Q4_contours}). 
    \label{Fig:NN_Plume_subsurface_B_Q4_contours}}
\end{figure}

\begin{figure}
  \centering
  \subfigure{
    \includegraphics[clip,scale=0.8]{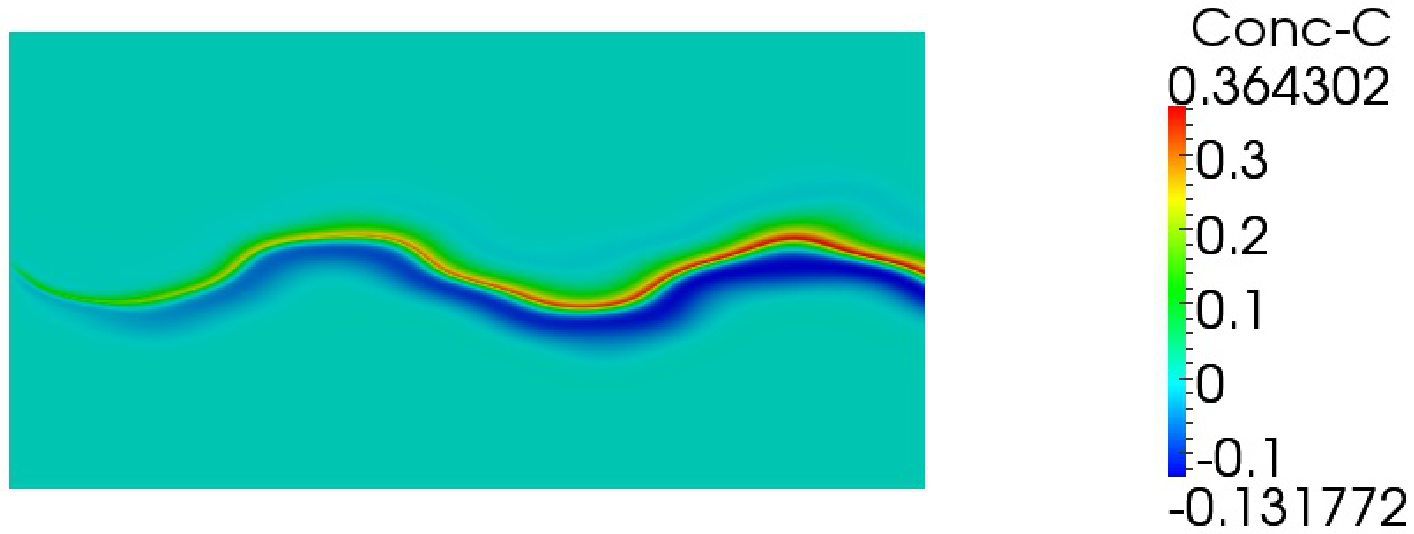}}
  \subfigure{
    \includegraphics[clip,scale=0.8]{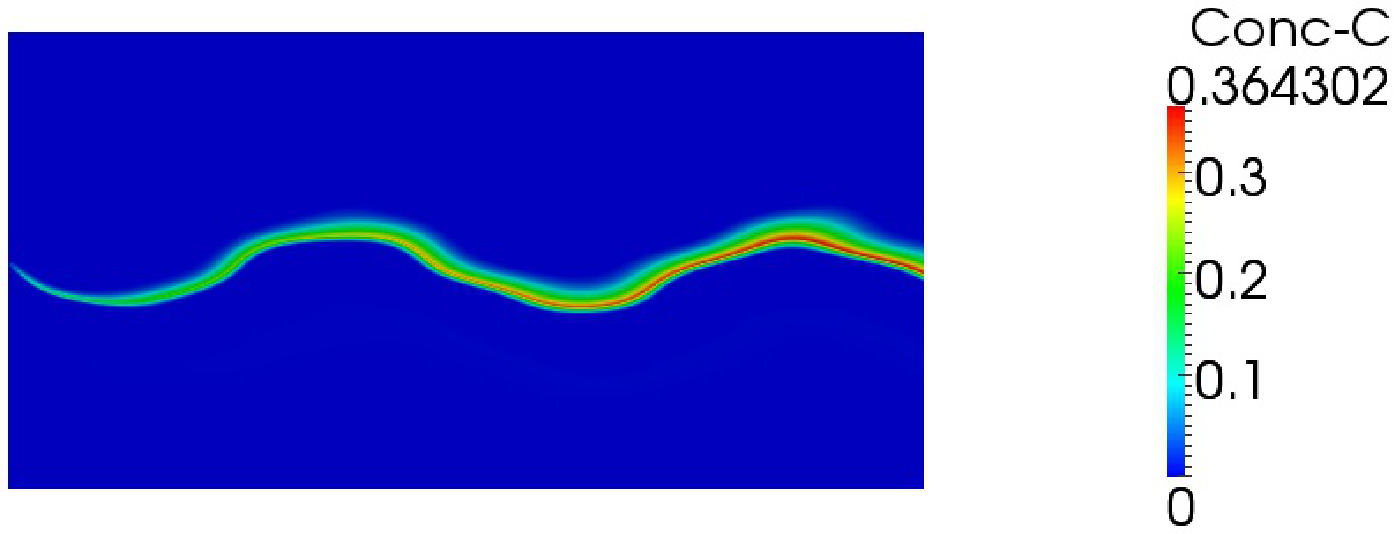}}
  \subfigure{
    \includegraphics[clip,scale=0.8]{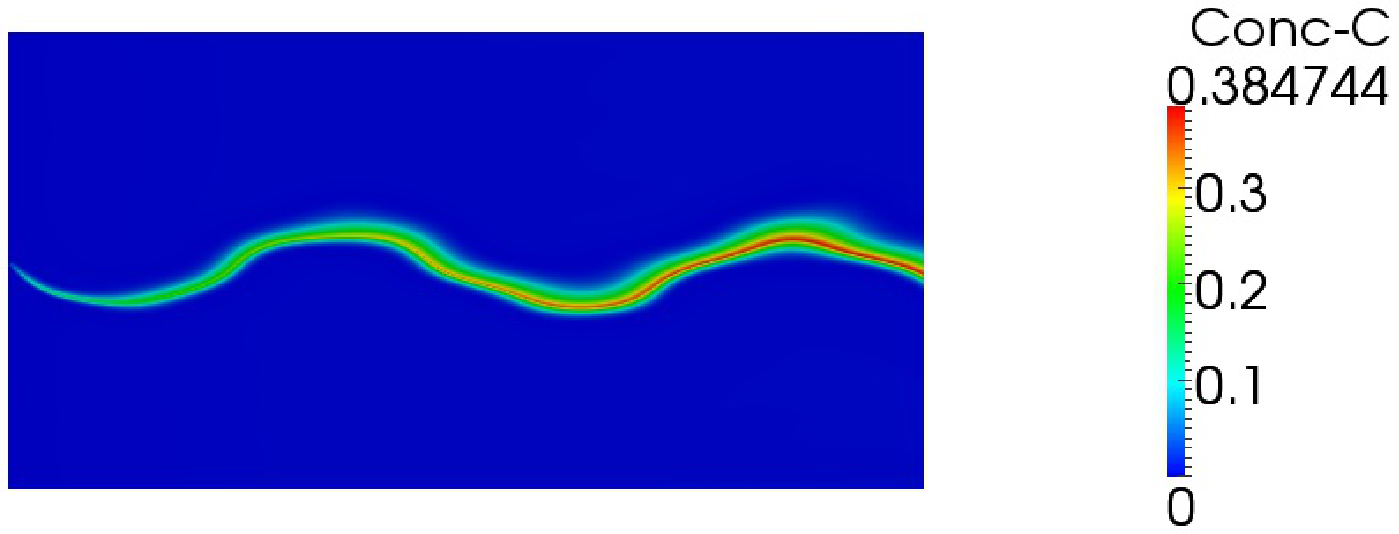}}
  \caption{Plume formation from boundary in a reaction tank: 
    This figure shows the contours of the concentration of 
    the product $C$ obtained using the Galerkin formulation 
    (top figure), the clipping procedure (middle figure), 
    and the proposed computational framework (bottom figure). 
    Mathematically, the concentration of the product $C$ should 
    be non-negative. There is no upper bound on the concentration 
    of the product due to the maximum principle, as there is a 
    source for the product $C$ due to the underlying reaction.
    As one can see from the figure, the Galerkin formulation 
    violates the non-negative constraint. On the other hand, 
    the proposed computational framework satisfies the 
    non-negative constraint for the concentration of the 
    product $C$. More importantly, the contours of the 
    product under the Galerkin formulation, the clipping 
    procedure and the proposed computational framework
    differ considerably. 
    \emph{
    Using this figure, one can draw an important conclusion 
    of the paper: A small violation of the non-negative 
    constraint and the maximum principle in the calculation 
    of the invariants can result in significant errors in the concentration of the 
    product for diffusive-reactive systems.} 
    \label{Fig:NN_Plume_subsurface_C_Q4_contours}}
\end{figure}

\newpage
\clearpage

\begin{figure}
  \centering
  \includegraphics[clip,scale=0.9]{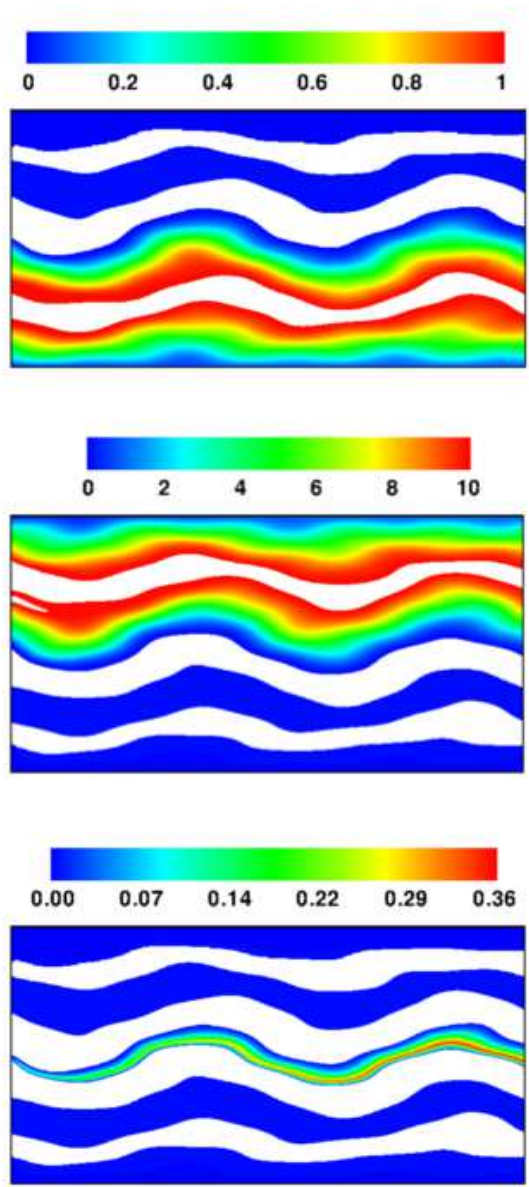}
  \caption{Plume formation from boundary in a reaction tank: This 
  figure shows the contours of invariant $F$ (top figure), invariant 
  $G$ (middle figure), and product $C$ (bottom figure) under the 
  Galerkin formulation. For the invariant $F$, the concentrations 
  outside the interval $[0, 1]$ are cut-off, and for the invariant 
  $G$ the concentrations outside $[0, 10]$ are cut-off. For the 
  product $C$, the concentrations below zero are cut-off. The 
  lower bounds are due to the non-negative constraint, and the 
  upper bounds are due to the maximum principle. The regions with 
  cut-off concentration values are indicated in white color. It 
  is evident that the Galerkin formulation violated the bounds 
  in significant portions of the computational domain.
  \label{Fig:NN_Plume_subsurface_FGC_Q4_contours}}
\end{figure}

\begin{figure}
  \centering
  \psfrag{cC_y_dot45}{$c_C(\mathrm{x},\mathrm{y}=0.45)$}
  \psfrag{cC_x_1dot75}{$c_C(\mathrm{x}=1.75,\mathrm{y})$}
  \psfrag{x}{$\mathrm{x}$}
  \psfrag{y}{$\mathrm{y}$}
  \subfigure{
    \includegraphics[clip,scale=0.45]{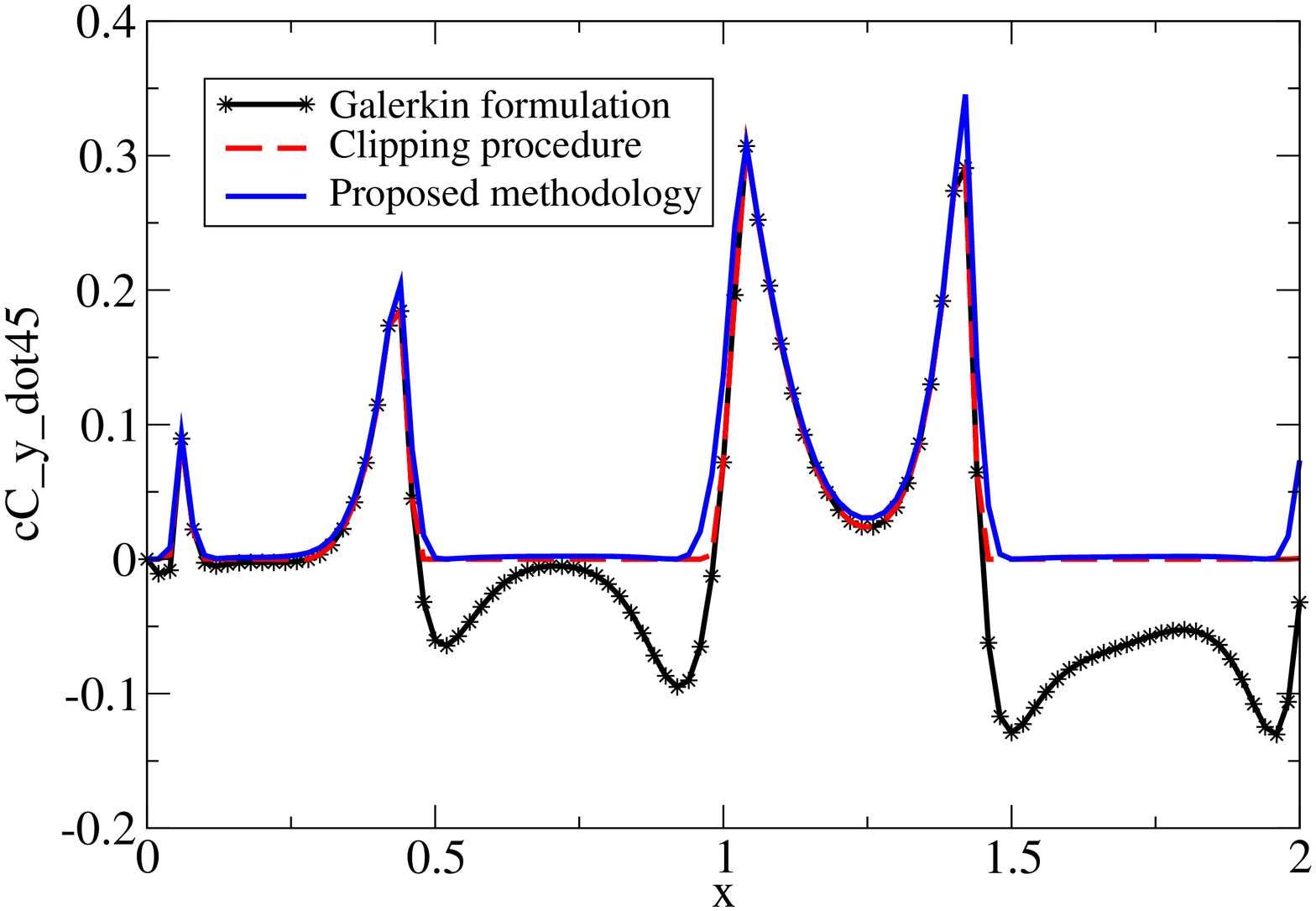}}
  \subfigure{
    \includegraphics[clip,scale=0.45]{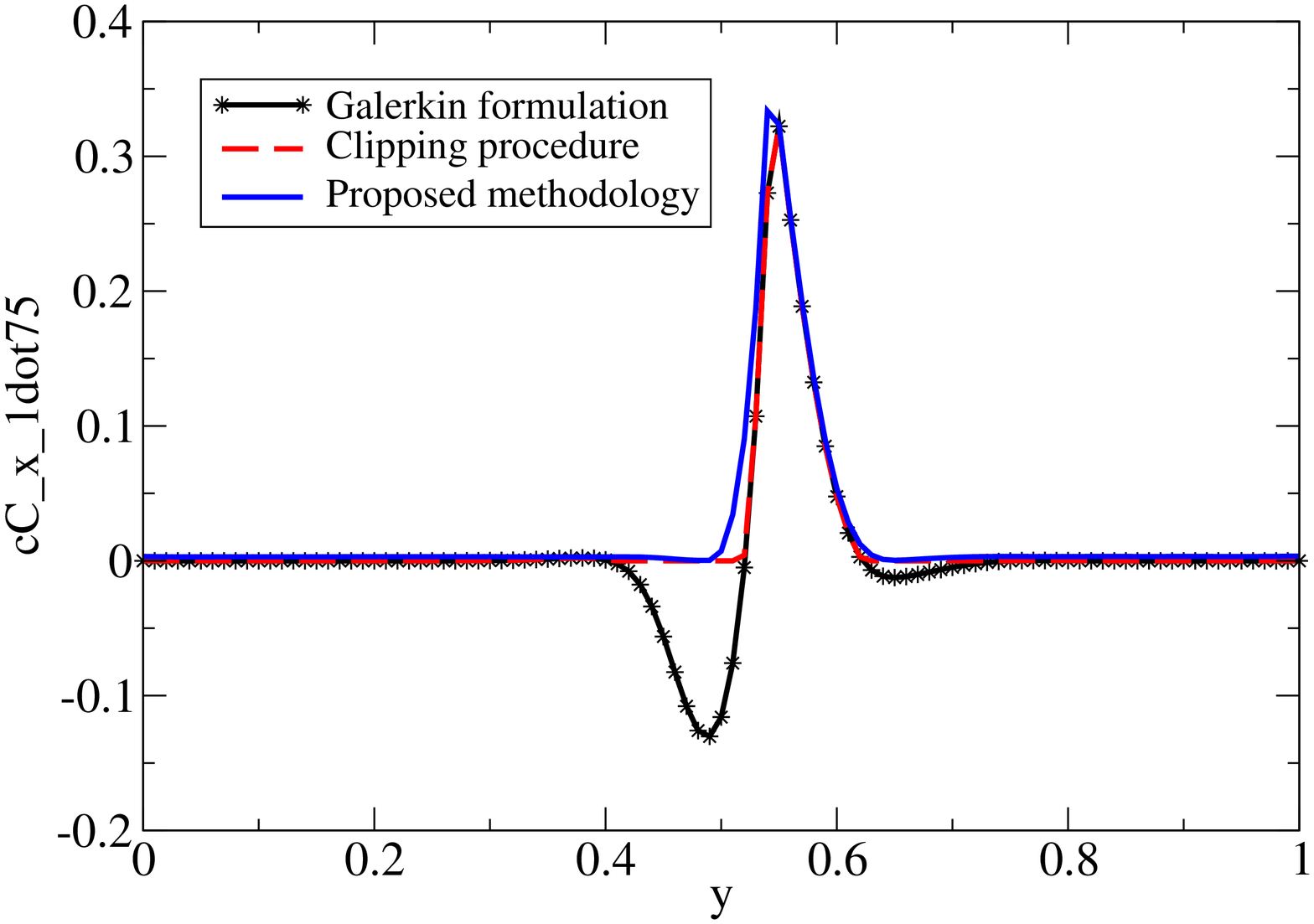}}
  \caption{Plume formation from boundary in a reaction tank: 
    This figure shows the variation of the concentration of 
    the product at $\mathrm{y} = 0.45$ (top figure) and $\mathrm{x} 
    = 1.75$ (bottom figure). The Galerkin formulation produces negative 
    concentrations for the product that is extended over significant 
    region, and the violation is not mere numerical noise. It should 
    also be noted the concentration of the product under the clipping 
    procedure is different from that of the proposed computational 
    framework.
    \label{Fig:NN_Plume_subsurface_cC_sectional_views}}
\end{figure}

\begin{figure}
  \centering
  \psfrag{intcF}{$\int c_F(\mathrm{x},\mathrm{y}) \;\mathrm{dy}$}
  \psfrag{intcG}{$\int c_G(\mathrm{x},\mathrm{y}) \;\mathrm{dy}$}
  \psfrag{x}{$\mathrm{x}$}
  \subfigure{
    \includegraphics[clip,scale=0.45]{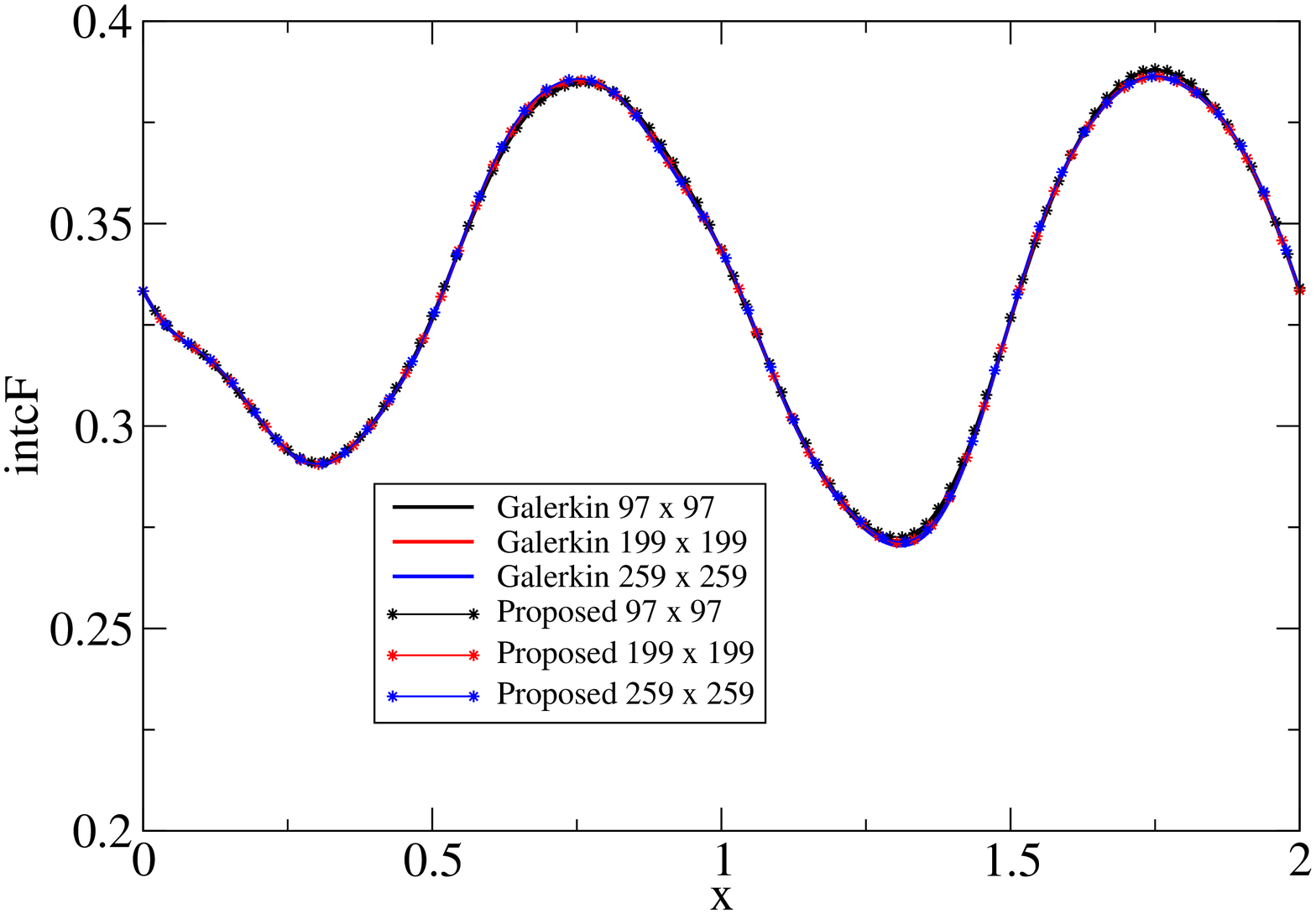}}
  \subfigure{
    \includegraphics[clip,scale=0.45]{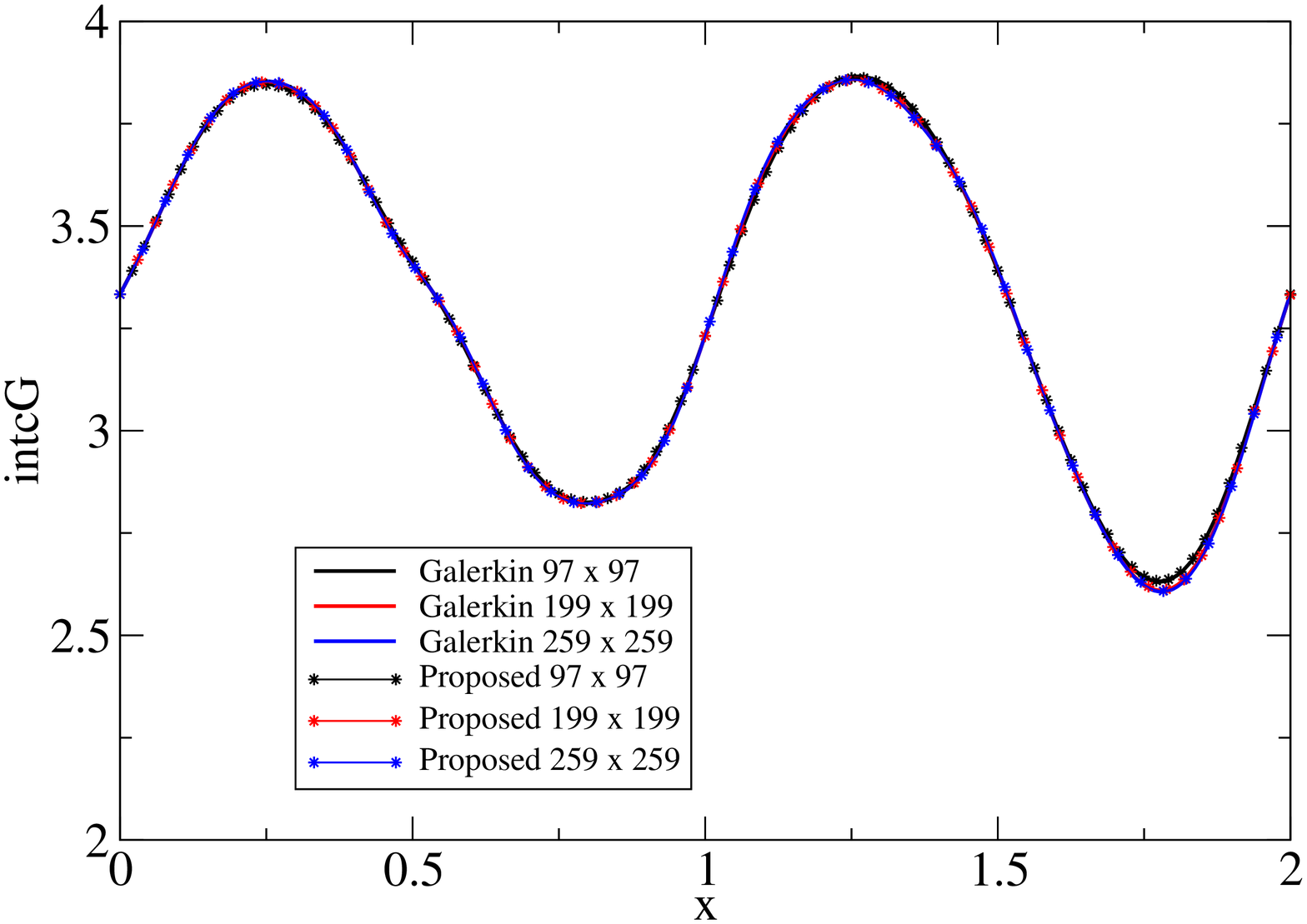}}
  \caption{Plume formation from boundary in a reaction tank: 
    This figure shows the variation of integrated concentration 
    of the invariants $F$ and $G$ with respect to $\mathrm{y}$ 
    along x (i.e., the variations of $\int c_F (\mathrm{x},
    \mathrm{y}) \; \mathrm{dy}$ and $\int c_G (\mathrm{x},
    \mathrm{y}) \; \mathrm{dy}$ along $\mathrm{x}$) for 
    various meshes using the Galerkin formulation, and 
    the proposed computational framework. The results 
    obtained under the clipping procedure are almost 
    the same as that of the Galerkin formulation and 
    the proposed computational framework, and hence 
    not shown in the figure. 
    \label{Fig:NN_Plume_subsurface_integral_FG_over_y}}
\end{figure}

\begin{figure}
  \centering
  \psfrag{intcC}{$\int c_C(\mathrm{x},\mathrm{y}) \;\mathrm{dy}$}
  \psfrag{x}{$\mathrm{x}$}
  \includegraphics[clip,scale=0.45]{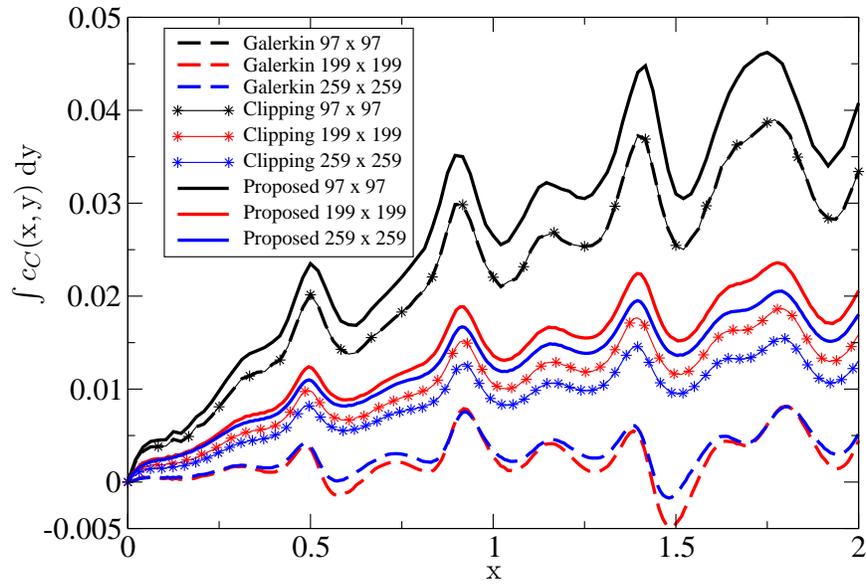}
  \caption{Plume formation from boundary in a reaction tank: 
    This figure shows the variation of integrated concentration 
    of the product $C$ with respect to $\mathrm{y}$ along x 
    (i.e., the variation of $\int c_C (\mathrm{x},\mathrm{y}) 
    \; \mathrm{dy}$ along $\mathrm{x}$) for various meshes using 
    the Galerkin formulation, the clipping procedure, and the 
    proposed computational framework. This figure clearly shows 
    that the Galerkin formulation predicts unphysical negative 
    values even for average quantities. In particular, 
    the Galerkin formulation predicts negative values for the 
    cross-sectional total concentration of the product for 
    significant portions along the $\mathrm{x}$-direction 
    even on fine computational grids. 
    \label{Fig:NN_Plume_subsurface_integral_C_over_y}}
\end{figure}

\begin{figure}
  \centering
  \subfigure{
    \includegraphics[clip,scale=0.9]{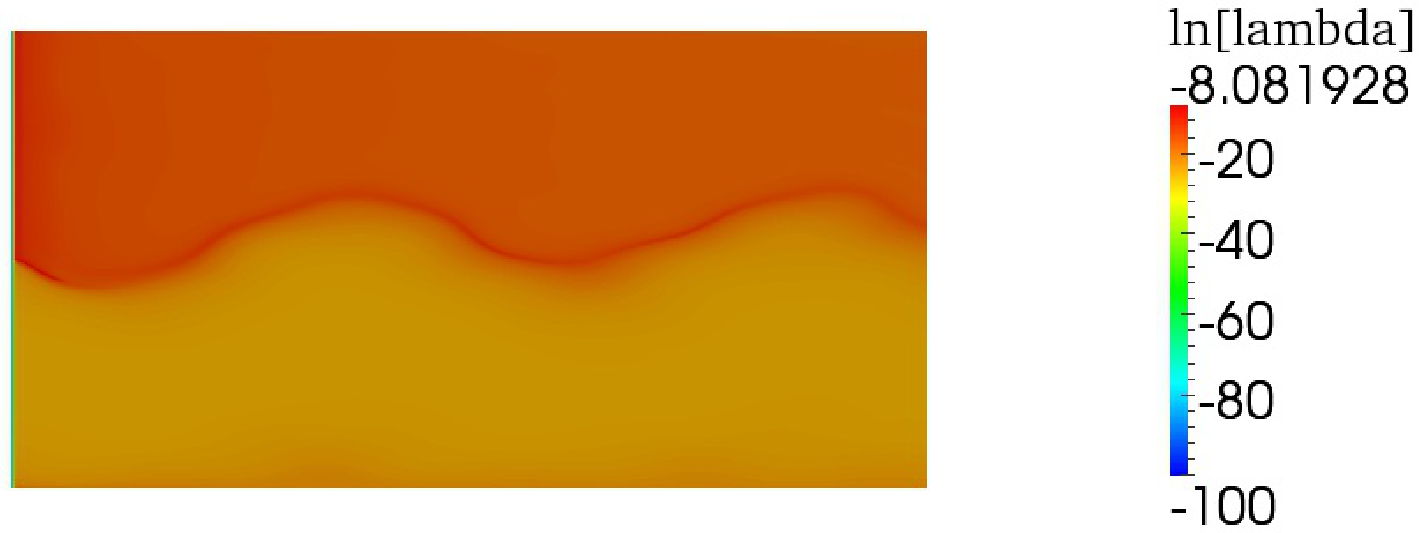}}
  \subfigure{
    \includegraphics[clip,scale=0.9]{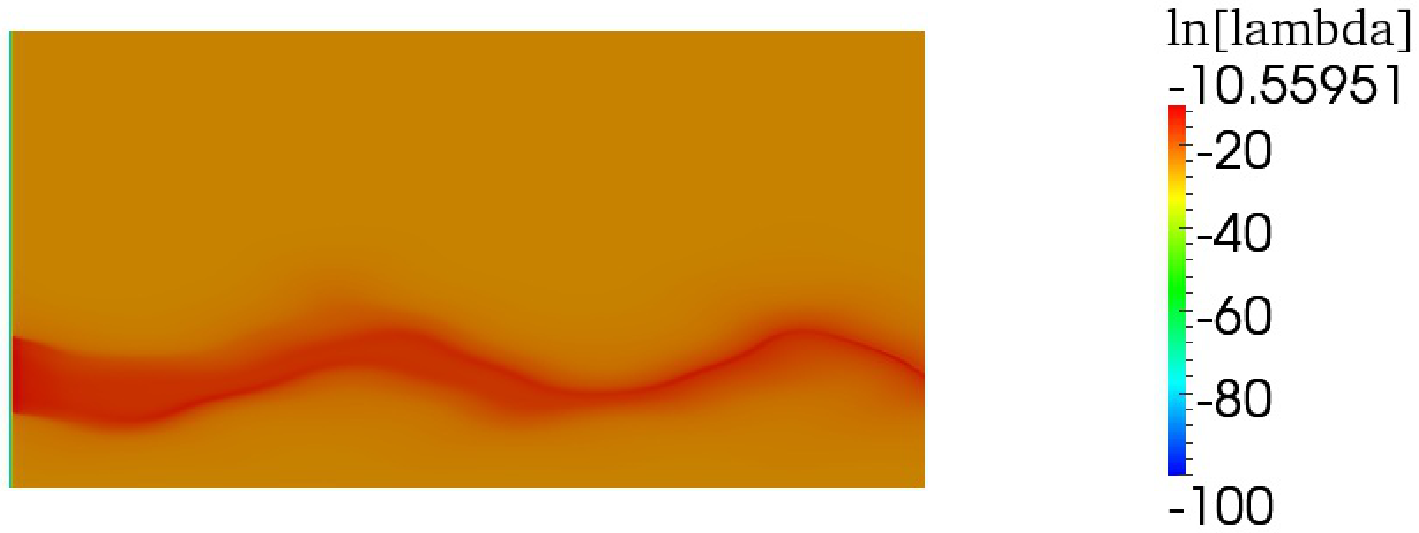}}
  \caption{Plume formation from boundary in a reaction tank: This 
    figure shows the contours of the natural logarithm of the 
    Lagrange multipliers corresponding to the lower bound 
    constraints (top) and the upper bound constraints (bottom) 
    for the invariant $F$. The proposed computational framework 
    is employed, which solves a quadratic programming problems to 
    obtain the concentration of invariant $F$. It should be noted 
    that the Lagrange multipliers will always be non-negative, and 
    hence the natural logarithm of the Lagrange multipliers will 
    be (extended) real numbers. For visualization purposes, we 
    have set the natural logarithm of zero to be $-100$. 
    \label{Fig:NN_Plume_subsurface_F_Lagrange_contours}}
\end{figure}

\begin{figure}
  \centering
  \subfigure{
    \includegraphics[clip,scale=0.9]{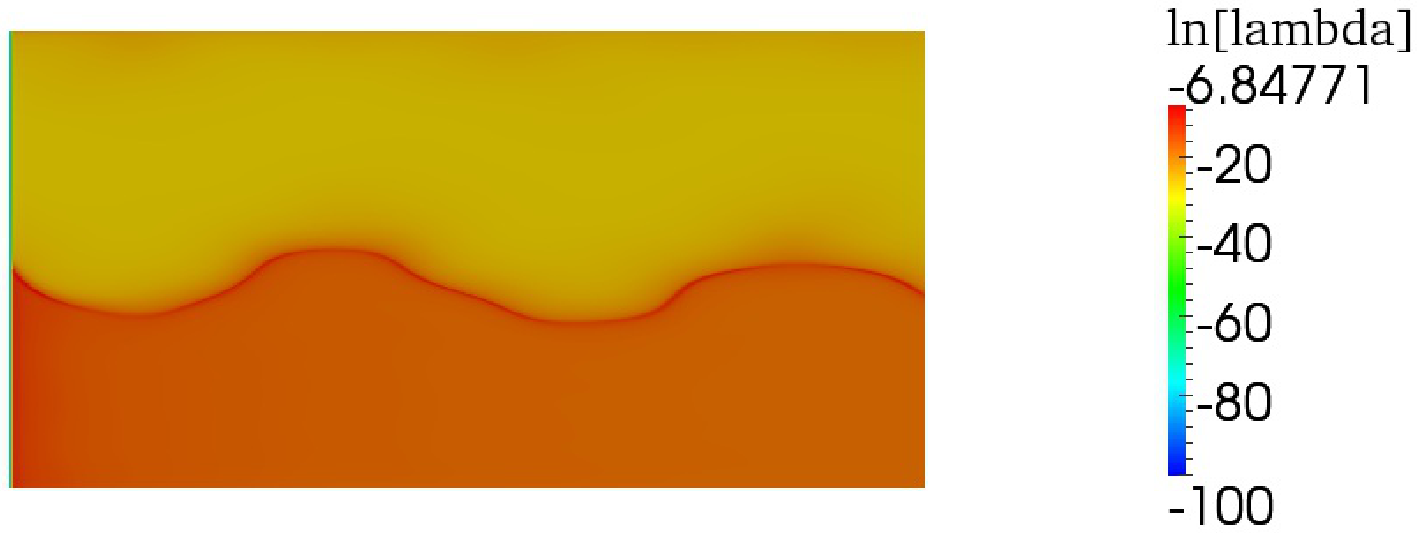}}
  \subfigure{
    \includegraphics[clip,scale=0.9]{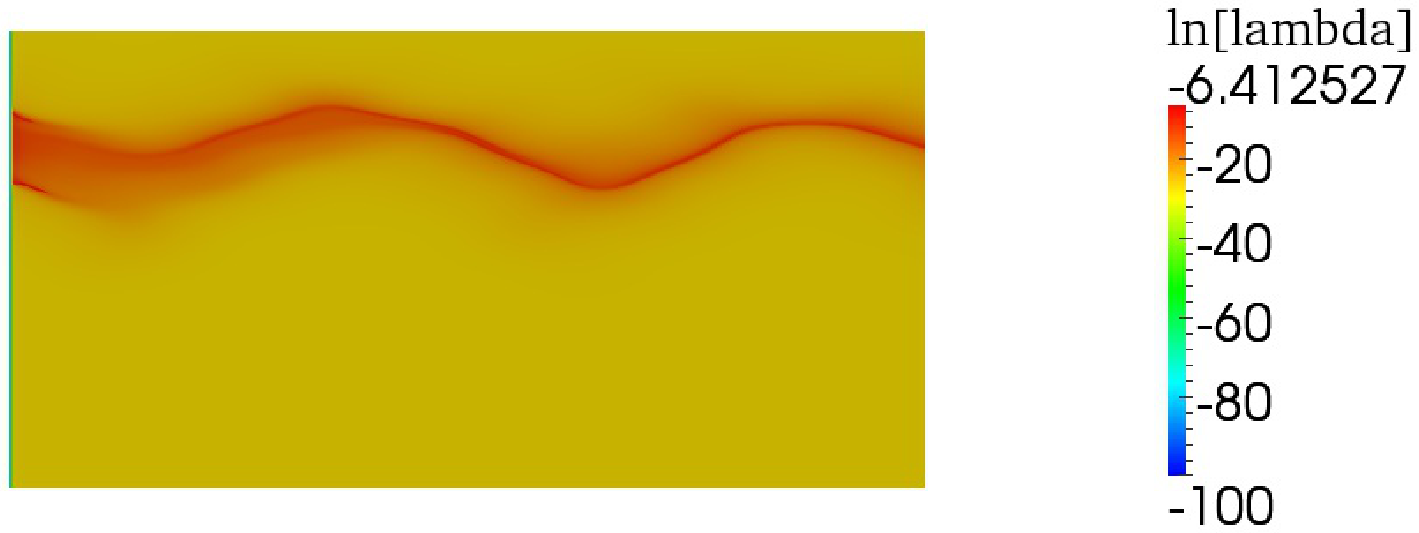}}
  \caption{Plume formation from boundary in a reaction tank: This 
    figure shows the contours of the natural logarithm of the 
    Lagrange multipliers corresponding to the lower bound 
    constraints (top) and the upper bound constraints (bottom) 
    for the invariant $G$. The proposed computational framework 
    is employed, which solves a quadratic programming problems to 
    obtain the concentration of invariant $G$. It should be noted 
    that the Lagrange multipliers will always be non-negative, and 
    hence the natural logarithm of the Lagrange multipliers will 
    be (extended) real numbers. For visualization purposes, we 
    have set the natural logarithm of zero to be $-100$. 
    \label{Fig:NN_Plume_subsurface_G_Lagrange_contours}}
\end{figure}


\begin{figure}
  \psfrag{A1}{$A \, , \, f^1_A$}
  \psfrag{A2}{$A \, , \, f^2_A$}
  \psfrag{B1}{$B \, , \, f^1_B$}
  \psfrag{B2}{$B \, , \, f^2_B$}
  \psfrag{xo}{$\mathbf{x}_{\mathrm{origin}}$}
  \psfrag{x1}{$\mathbf{x}_{1} = (\mathrm{x}_{1},\mathrm{y}_{1})$}
  \psfrag{x2}{$\mathbf{x}_{2} = (\mathrm{x}_{2},\mathrm{y}_{2})$}
  \psfrag{x3}{$\mathbf{x}_{3} = (\mathrm{x}_{3},\mathrm{y}_{3})$}
  \psfrag{x4}{$\mathbf{x}_{4} = (\mathrm{x}_{4},\mathrm{y}_{4})$}
  \psfrag{Lx}{$L_x$}
  \psfrag{Ly}{\rotatebox{-90}{$L_y$}}
  \psfrag{BC1}{\rotatebox{90}{$c_A^{\mathrm{p}}(\mathbf{x}) = 0$ , 
    $c_B^{\mathrm{p}}(\mathbf{x}) = 0$, $c_C^{\mathrm{p}}(\mathbf{x}) = 0$}}
  \psfrag{BC2}{$c_A^{\mathrm{p}}(\mathbf{x}) = 0$ , 
    $c_B^{\mathrm{p}}(\mathbf{x}) = 0$, $c_C^{\mathrm{p}}(\mathbf{x}) = 0$}
  \psfrag{BC3}{\rotatebox{-90}{$c_A^{\mathrm{p}}(\mathbf{x}) = 0$ , 
    $c_B^{\mathrm{p}}(\mathbf{x}) = 0$, $c_C^{\mathrm{p}}(\mathbf{x}) = 0$}}
  \psfrag{BC4}{$c_A^{\mathrm{p}}(\mathbf{x}) = 0$ , 
    $c_B^{\mathrm{p}}(\mathbf{x}) = 0$, $c_C^{\mathrm{p}}(\mathbf{x}) = 0$}
  \includegraphics[scale=0.9]{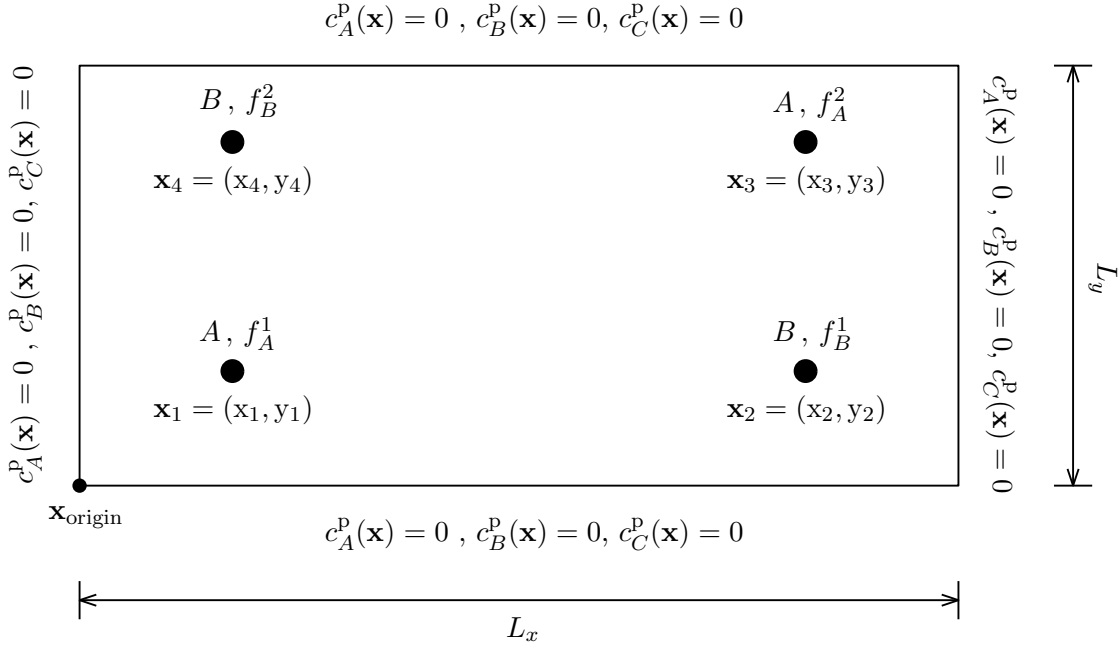}
  \caption{Plume formation due to multiple stationary point sources: 
    A pictorial description of boundary value problem. The 
    computational domain is a rectangle with origin at $\mathbf{x}_
    {\mathrm{origin}} = (0,0)$. The length and width of the rectangular 
    domain is $L_x = 2$ and $L_y = 1$. Continuous point sources of 
    chemical species $A$ and $B$ are located at $\mathbf{x}_{1} = 
    (0.2,0.3)$, $\mathbf{x}_{2} = (1.6,0.3)$, $\mathbf{x}_{3} = (1.6,0.7)$, 
    and $\mathbf{x}_{4} = (0.2,0.7)$ release reactants at a constant rate 
    of $f^1_A = 0.1$, $f^2_A = 0.05$, $f^1_B = 0.1$, and $f^2_B = 0.1$ 
    moles. The Dirichlet boundary conditions $c_A^{\mathrm{p}}(\mathbf{x})$ , 
    $c_B^{\mathrm{p}}(\mathbf{x})$, and $c_C^{\mathrm{p}}(\mathbf{x})$ 
    are prescribed to be zero on the boundary.   
    \label{Fig:Plume_Multiple_PointSources}}
\end{figure}

\begin{figure}
  \centering
  \includegraphics[clip,scale=0.75]{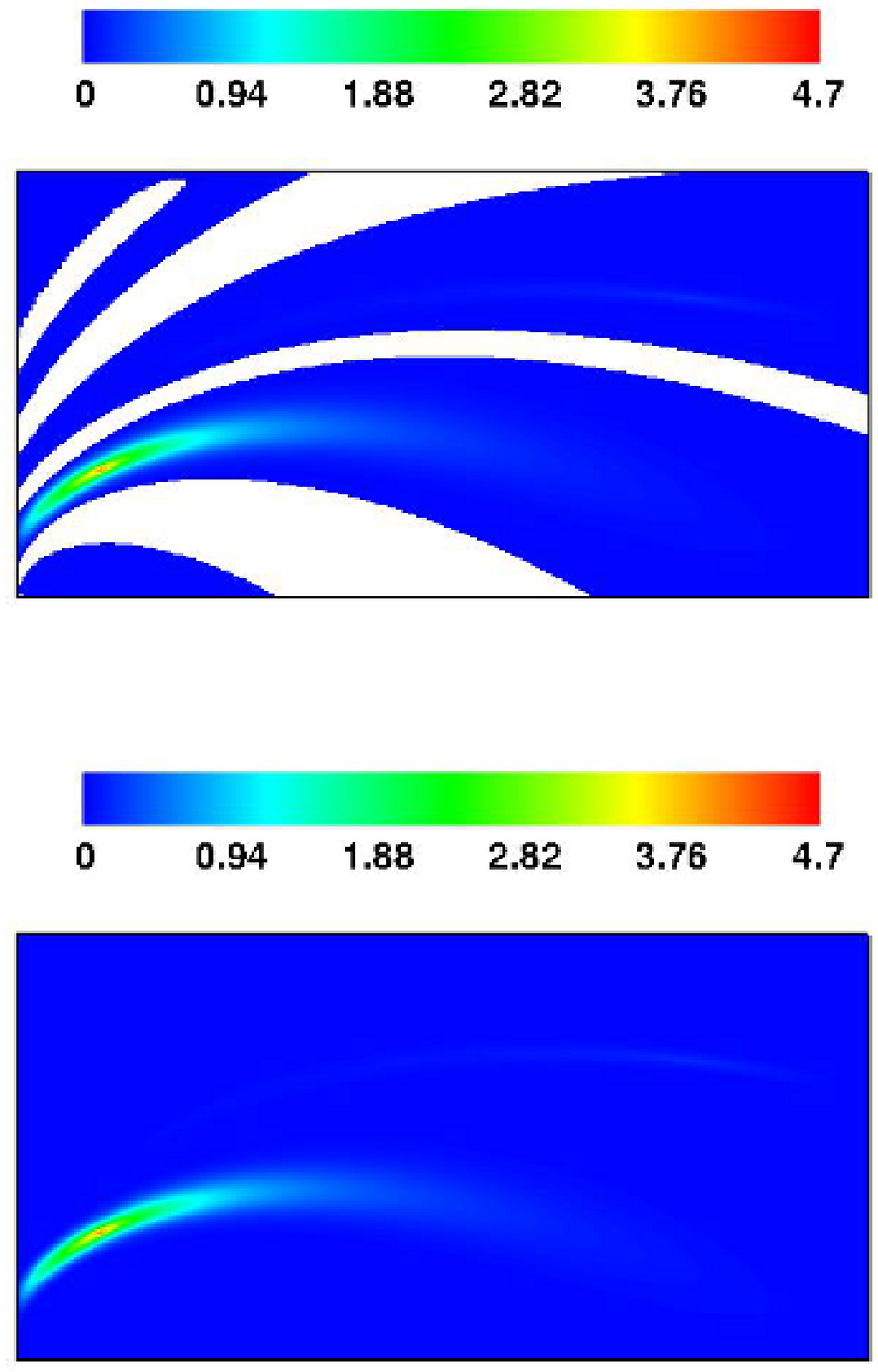}
  \caption{Plume formation due to multiple stationary point sources: 
    This figure shows the \emph{steady-state} contours of concentration 
    of the invariant $F$ under the Galerkin formulation (top figure), 
    and the non-negative formulation (bottom figure). The minimum and 
    maximum concentrations of $F$ under the Galerkin formulation are 
    respectively $-0.0498$ and $4.7100$, and the corresponding values 
    under the non-negative formulation are respectively $0$ and $4.7088$. 
    The rotated heterogeneous diffusivity tensor is employed in the 
    numerical simulation. The results are generated using $201 \times 
    201$ structured mesh using four-node quadrilateral elements. The 
    regions with negative concentration are indicated in white color. 
    \label{Fig:NN_Plume_PointSources_contour_F}}
\end{figure}

\begin{figure}
  \centering
  \includegraphics[clip,scale=0.75]{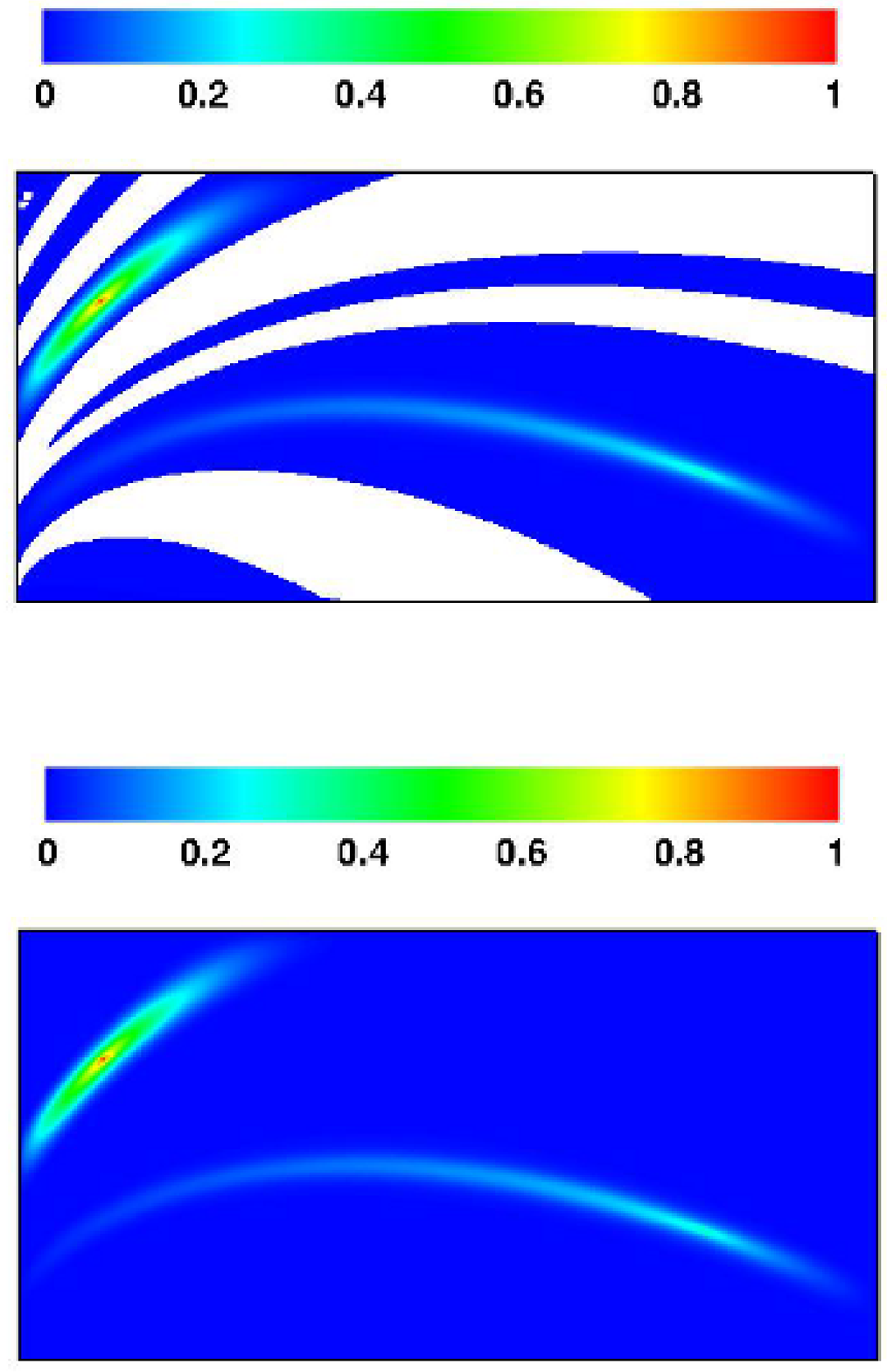}
  \caption{Plume formation due to multiple stationary point sources: 
    This figure shows the \emph{steady-state} contours of concentration 
    of the invariant $G$ under the Galerkin formulation (top figure), 
    and the non-negative formulation (bottom figure). The minimum and 
    maximum concentrations of $G$ under the Galerkin formulation are 
    respectively $-0.0158$ and $0.9977$, and the corresponding values 
    under the non-negative formulation are respectively $0$ and $0.9969$. 
    The rotated heterogeneous diffusivity tensor is employed in the 
    numerical simulation. The results are generated using $201 \times 
    201$ structured mesh using four-node quadrilateral elements. 
    The regions with negative concentration are indicated in white 
    color. \label{Fig:NN_Plume_PointSources_contour_G}}
\end{figure}

\begin{figure}
  \centering
  \includegraphics[clip,scale=0.75]{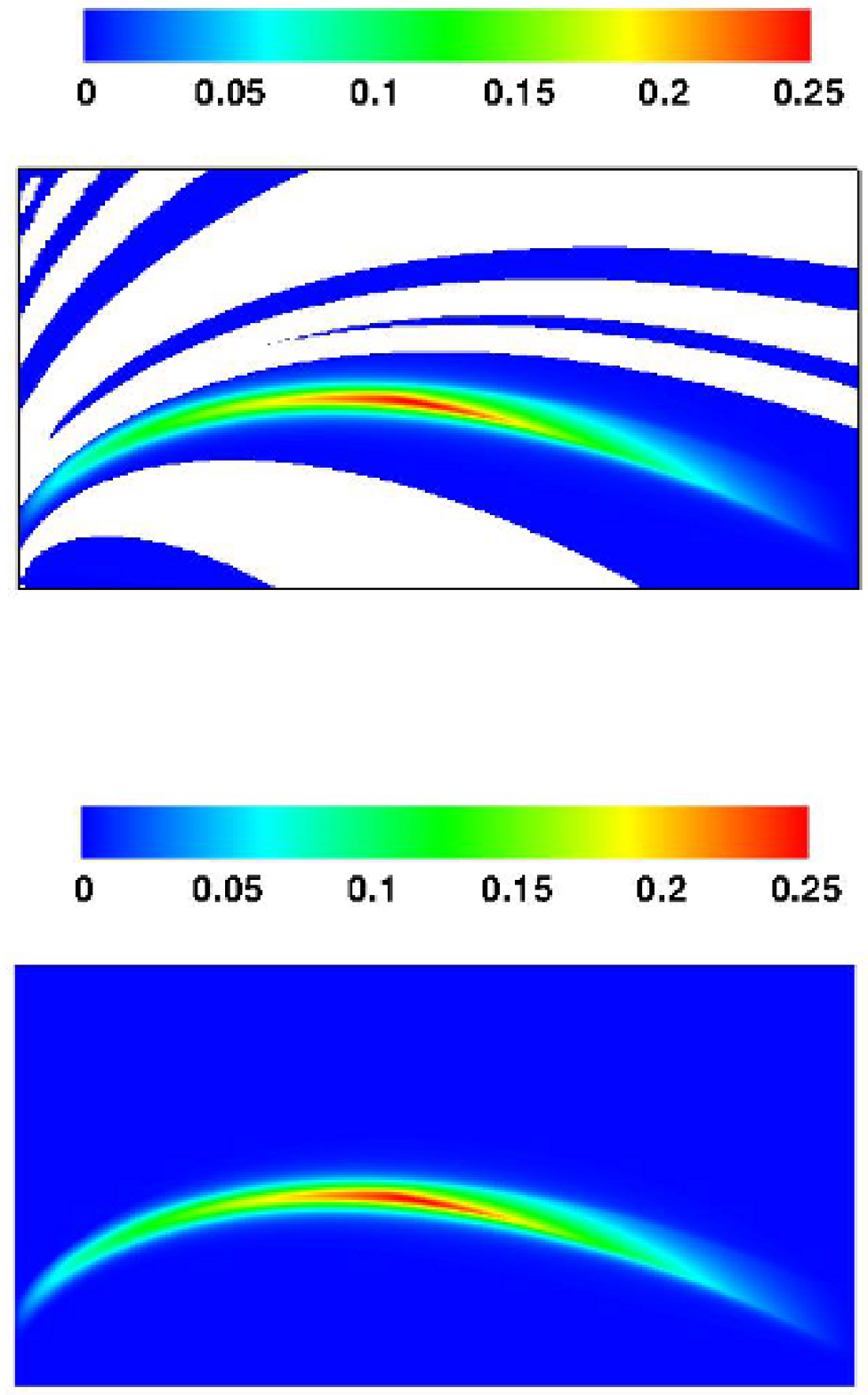}
  \caption{Plume formation due to multiple stationary point sources: 
    This figure shows the \emph{steady-state} contours of concentration 
    of product $C$ under the Galerkin formulation (top figure), and the 
    non-negative formulation (bottom figure). The minimum and maximum 
    concentrations of $C$ under the Galerkin formulation are respectively 
    $-0.0996$ and $0.2560$, and the corresponding values under the 
    non-negative formulation are respectively $0$ and $0.2564$. The 
    rotated heterogeneous diffusivity tensor is employed in the 
    numerical simulation. The results are generated using $201 
    \times 201$ structured mesh using four-node quadrilateral 
    elements. The regions with negative concentration are 
    indicated in white color. 
    \label{Fig:NN_Plume_PointSources_contour_C}}
\end{figure}

\begin{figure}
  \centering
  \psfrag{intcC}{$\int c_C(\mathrm{x},\mathrm{y}) \; \mathrm{d} \mathrm{y}$}
  \includegraphics[clip,scale=0.5]{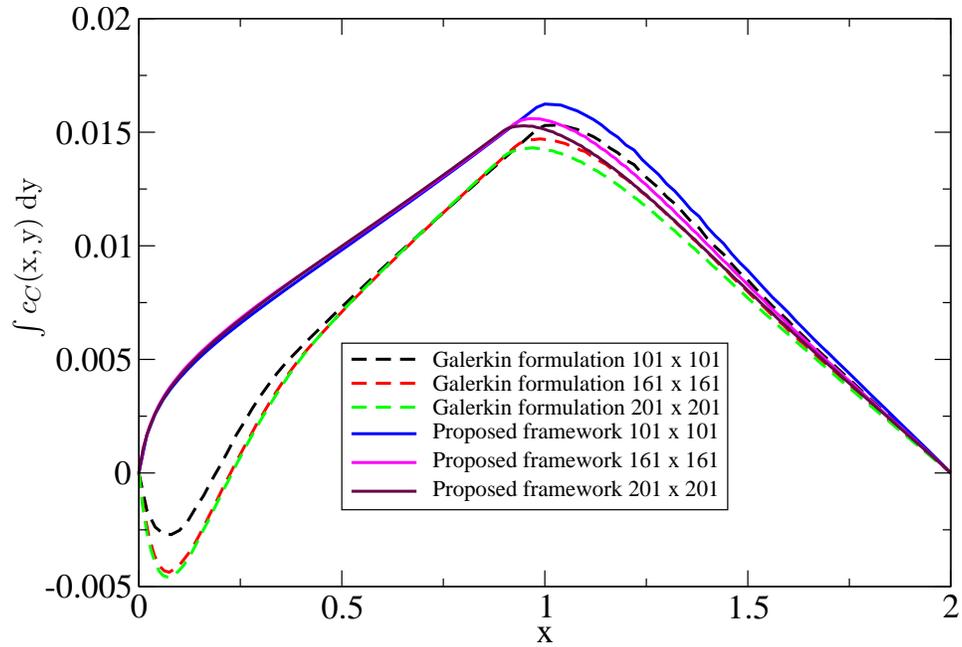}
  \caption{Plume formation due to multiple stationary point sources: 
    This figure shows the variation of integrated steady-state 
    concentration of the product $C$ with respect to $\mathrm{y}$ 
    along x (i.e., the variation of $\int c_C (\mathrm{x},\mathrm{y}) 
    \; \mathrm{dy}$ along $\mathrm{x}$) for various meshes using 
    \emph{the Galerkin formulation} and \emph{the proposed computational 
      framework}. Three different structured meshes using four-node 
    quadrilateral elements are employed, which are indicated as $101 
    \times 101$, $161 \times 161$ and $201 \times 201$. The rotated 
    heterogeneous diffusivity tensor is employed in the numerical 
    simulation. \label{Fig:NN_Plume_PointSources_integral_C_over_y}}
\end{figure}

\begin{figure}
  \centering
  \includegraphics[clip,scale=0.5]{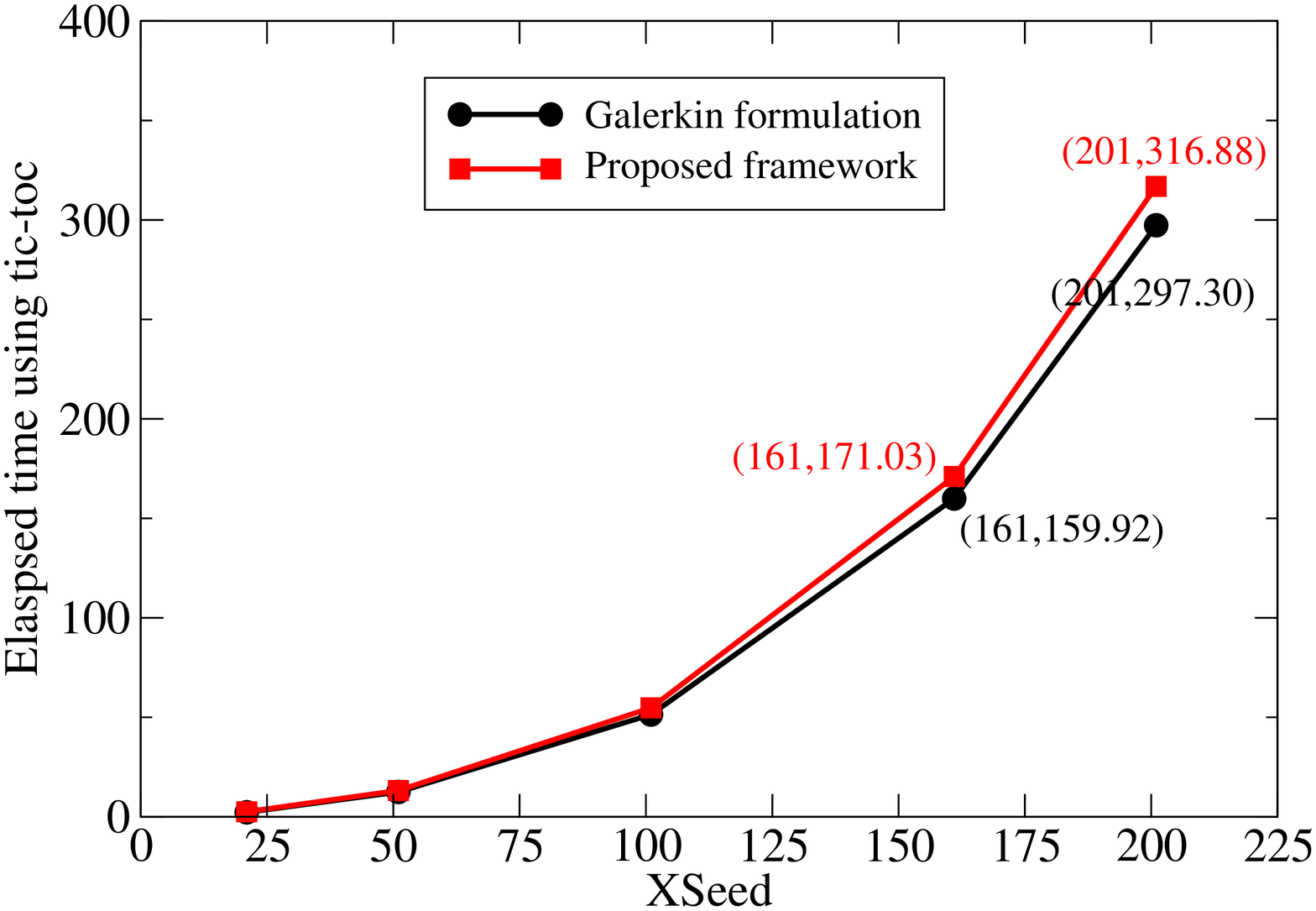}
  \caption{Plume formation due to multiple stationary point sources: 
    This figure compares the CPU timing of the Galerkin formulation 
    and the proposed computational framework. Note that XSeed denotes 
    the number of nodes along the x-direction, and the same number of 
    nodes are employed in the y-direction. In other words, a mesh with 
    XSeed = YSeed = 201 will have over 40,000 total number of 
    nodes. For each node, we have 5 unknowns -- two invariants, 
    two reactants, and the product. Note that we have employed 
    MATLAB's \textsf{interior-point-convex} algorithm to solve 
    the resulting quadratic programming problems. Each point 
    is generated by taking the average of five simulations. 
    \label{Fig:NN_Plume_PointSources_CPU_timing}}
\end{figure}


\begin{figure}
  \psfrag{A}{$A \, , \, c_A^{0}$}
  \psfrag{B1}{$A \, , \, c_A^{0} = 0$}
  \psfrag{B2}{$B \, , \, c_B^{0}$}
  \psfrag{IC}{Initial conditions for chemical species $A$ and $B$}
  \psfrag{xo}{$\mathbf{x}_{\mathrm{origin}}$}
  \psfrag{xs}{$\mathbf{x}_{\mathrm{slug}}$}
  \psfrag{Lx}{$L_x$}
  \psfrag{Ly}{\rotatebox{-90}{$L_y$}}
  \psfrag{dx}{$d_x$}
  \psfrag{dy}{\rotatebox{-90}{$d_y$}}
  \psfrag{BC1}{\rotatebox{90}{$c_A^{\mathrm{p}} = 0$ , $c_B^{\mathrm{p}}$}}
  \psfrag{BC2}{$c_A^{\mathrm{p}} = 0$ , $c_B^{\mathrm{p}}$}
  \psfrag{BC3}{\rotatebox{-90}{$c_A^{\mathrm{p}} = 0$ , $c_B^{\mathrm{p}}$}}
  \psfrag{BC4}{$c_A^{\mathrm{p}} = 0$ , $c_B^{\mathrm{p}}$}
  \includegraphics[scale=0.9]{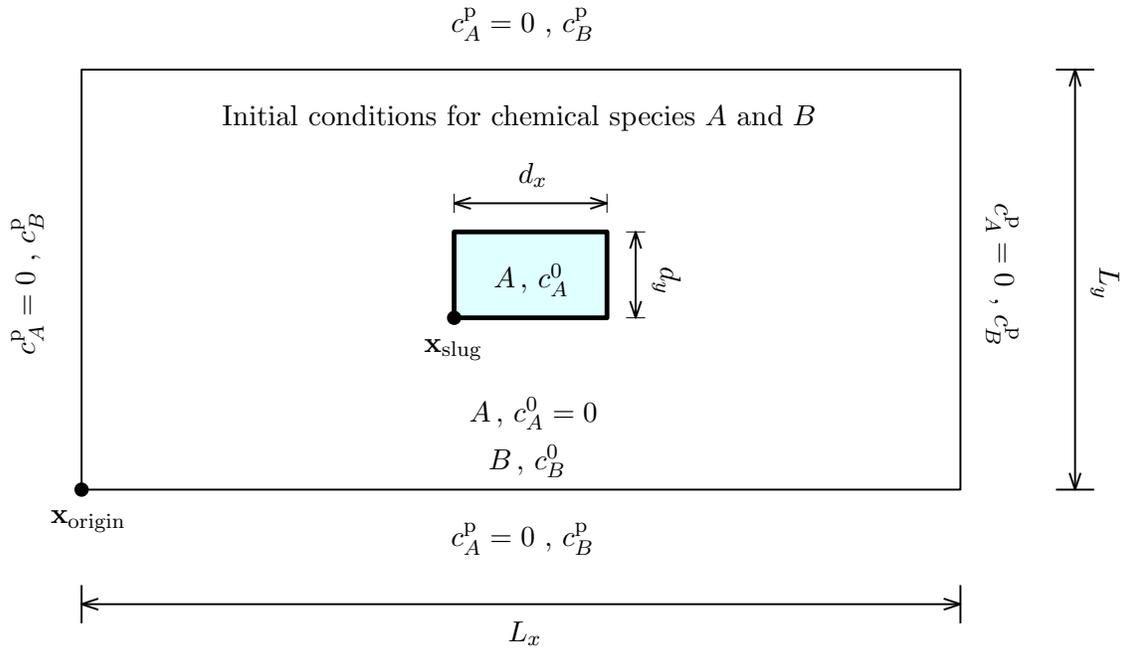}
  \caption{Diffusion and reaction of an initial slug: A pictorial 
  description of the initial boundary value problem. The slug of 
  dimensions $d_{x} = 2$ , $d_{y} = 1$ is located at $\mathbf{x}_{
  \mathrm{slug}} = (4,2)$ in the rectangular domain of length $L_x = 10$
  and breadth $L_y = 5$.
  \label{Fig:Plume_Slug}}
\end{figure}


\begin{figure}
  \centering
  \subfigure[$t = 0.05$]{\includegraphics[scale=0.35]
    {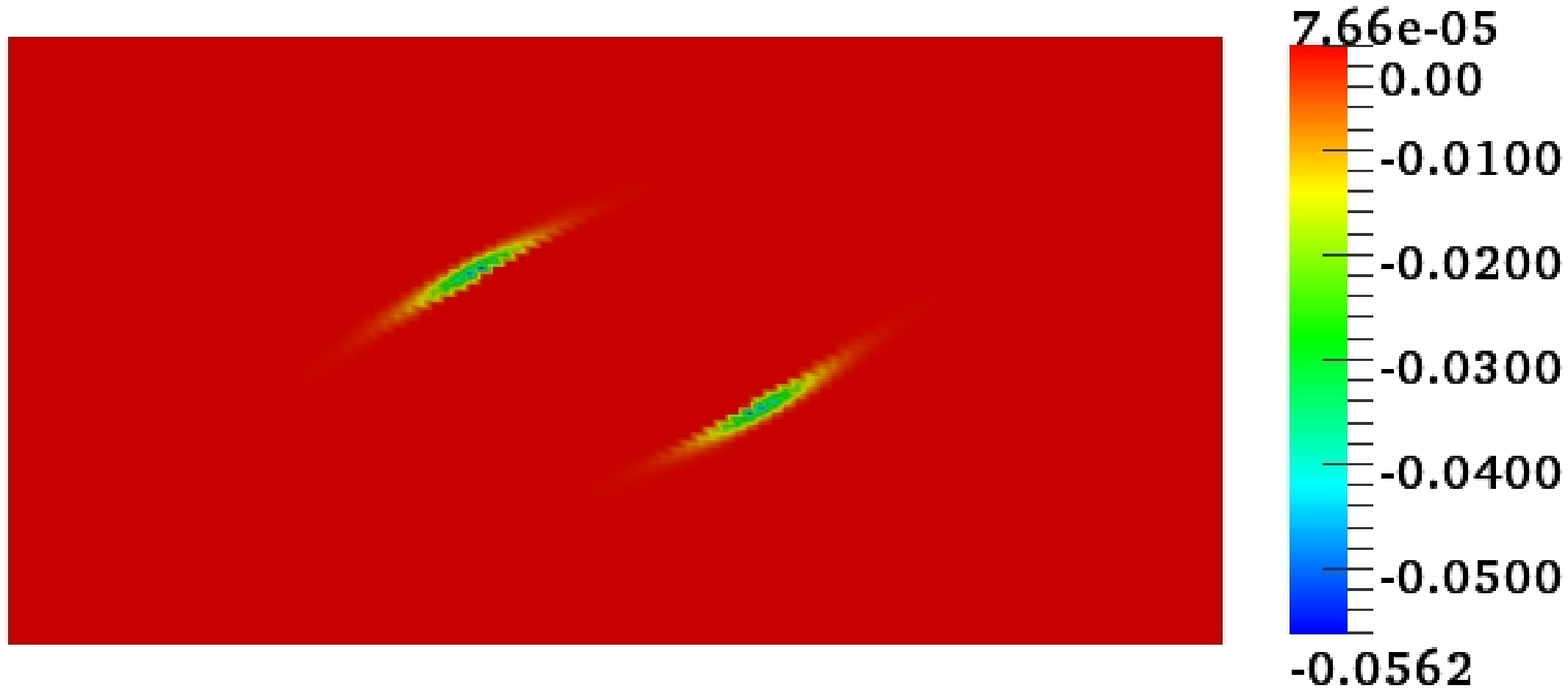}}
  \subfigure[$t = 0.05$]{\includegraphics[scale=0.35]
    {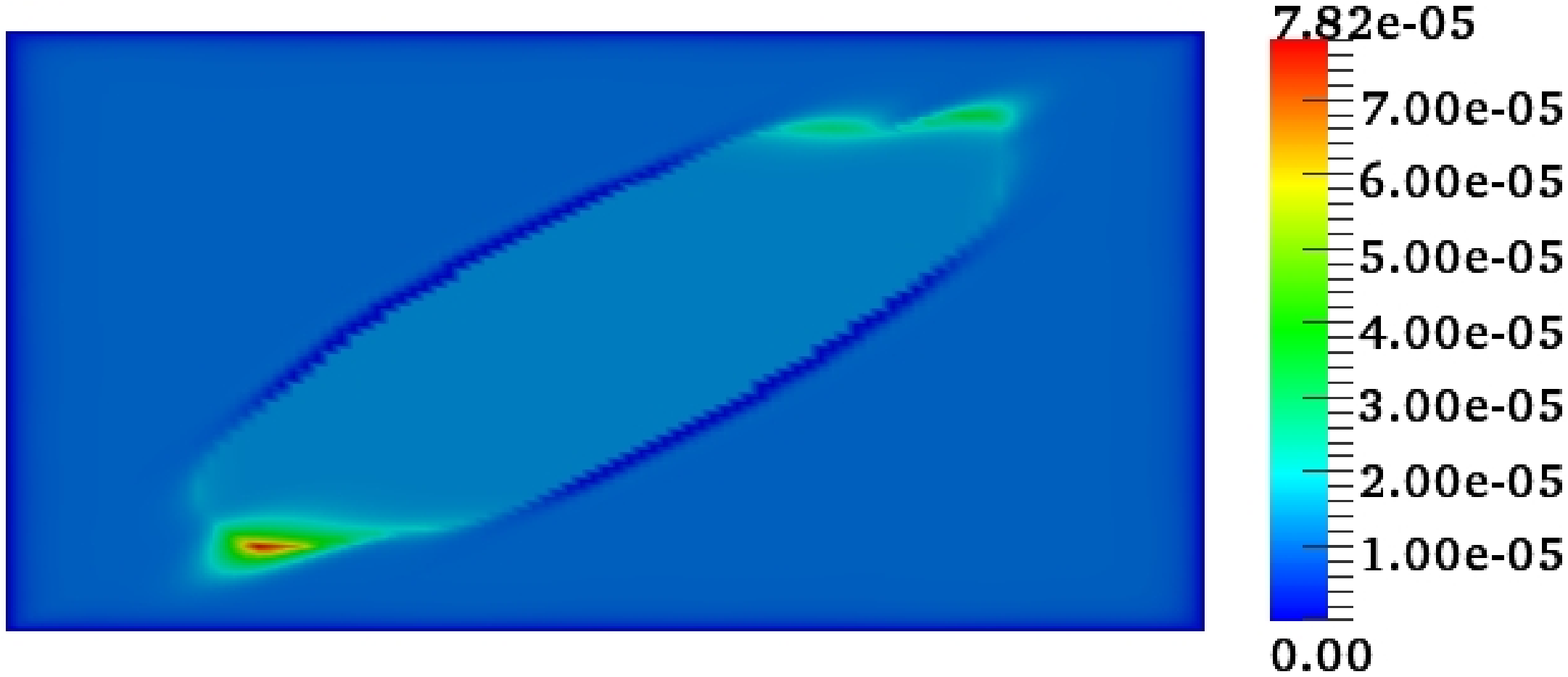}}
  \subfigure[$t = 0.1$]{\includegraphics[scale=0.35]
    {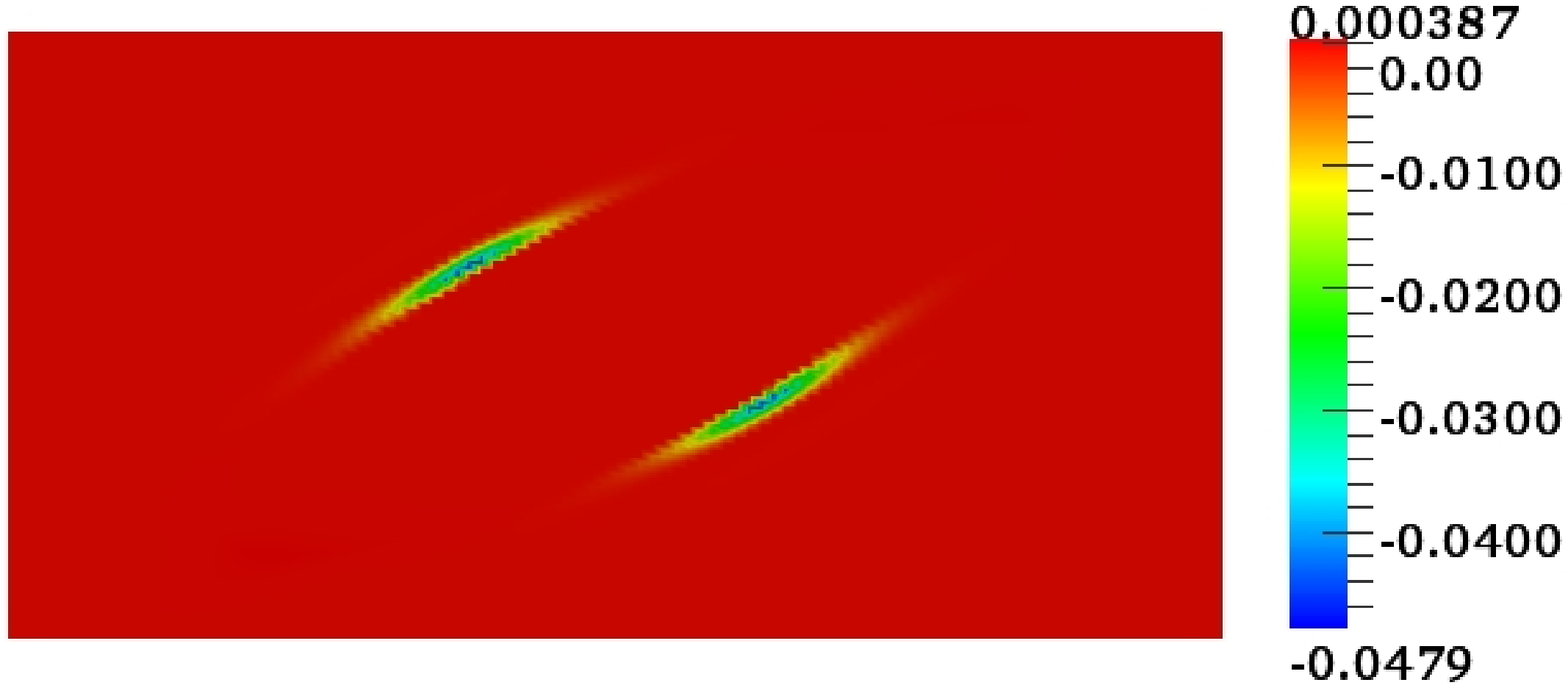}}
  \subfigure[$t = 0.1$]{\includegraphics[scale=0.35]
    {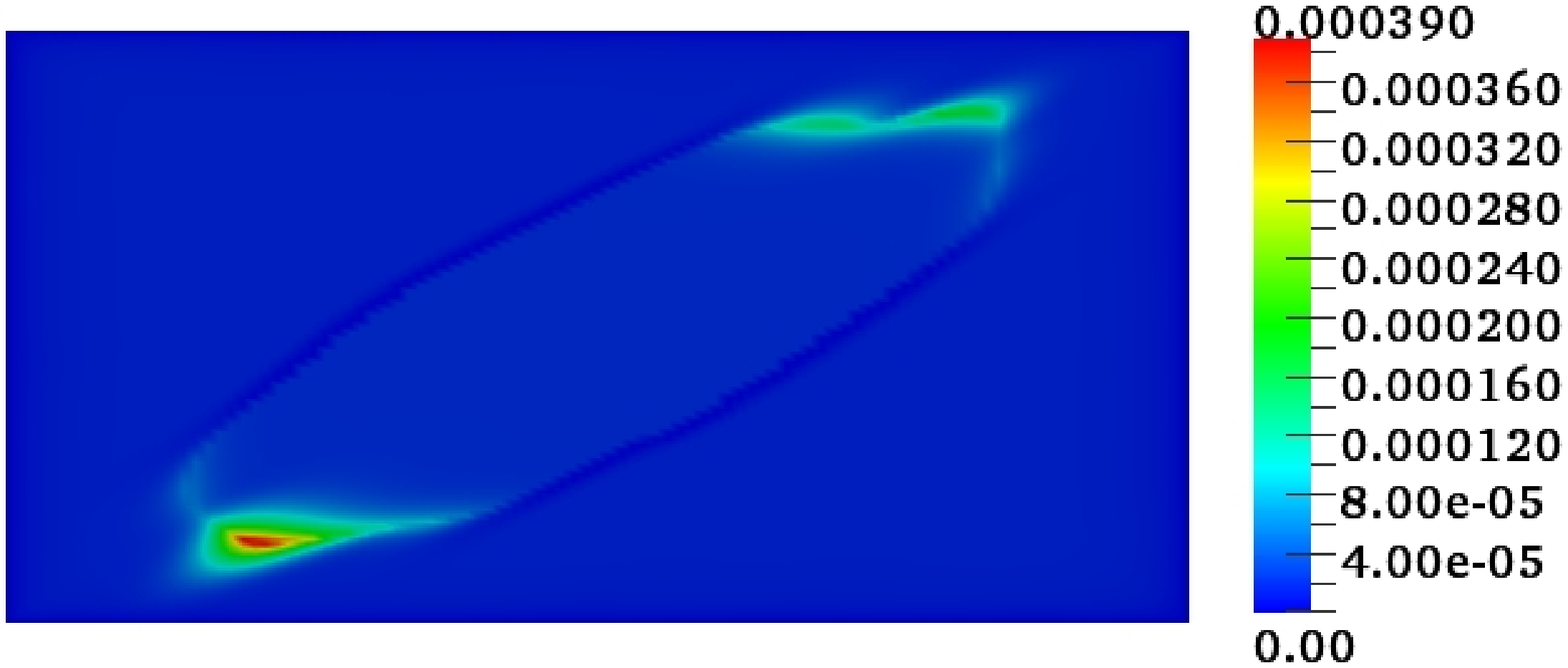}}
  \subfigure[$t = 0.5$]{\includegraphics[scale=0.35]
    {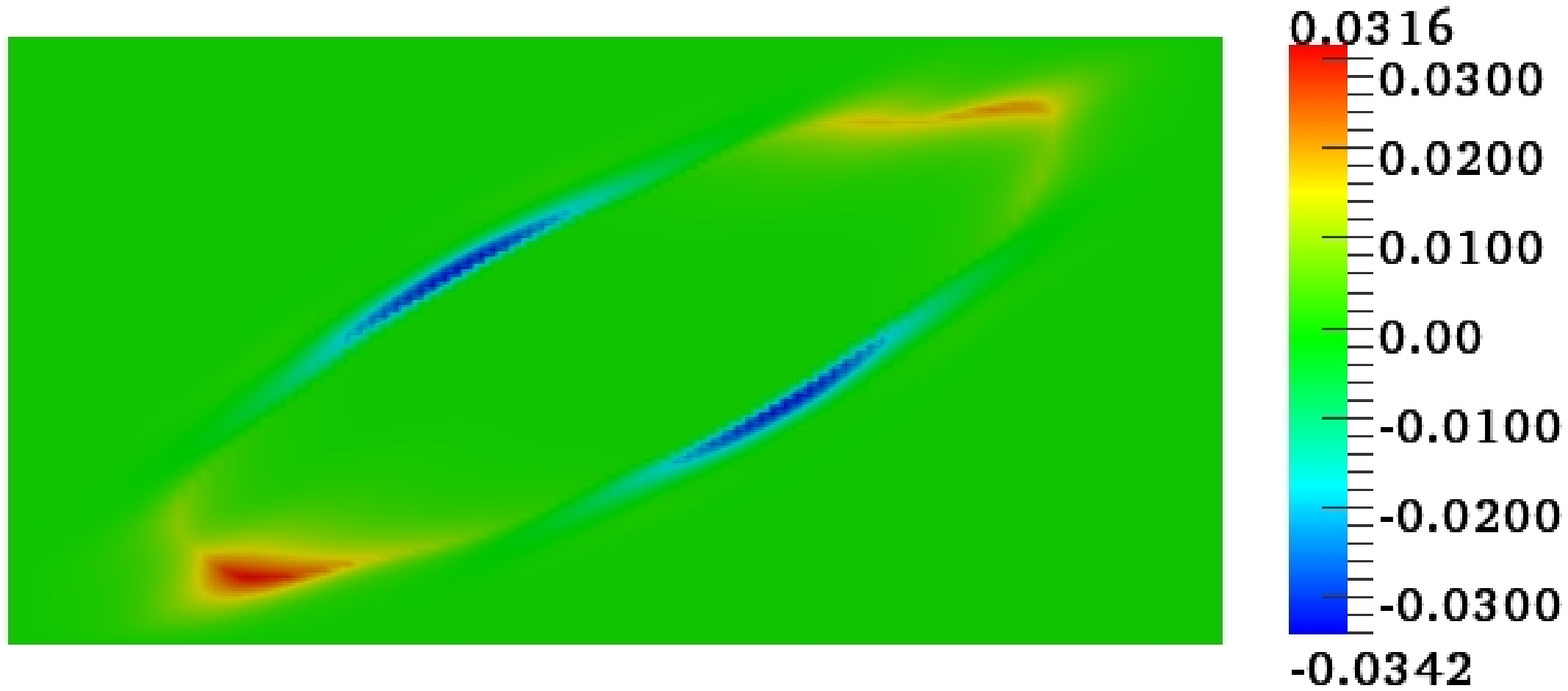}}
  \subfigure[$t = 0.5$]{\includegraphics[scale=0.35]
    {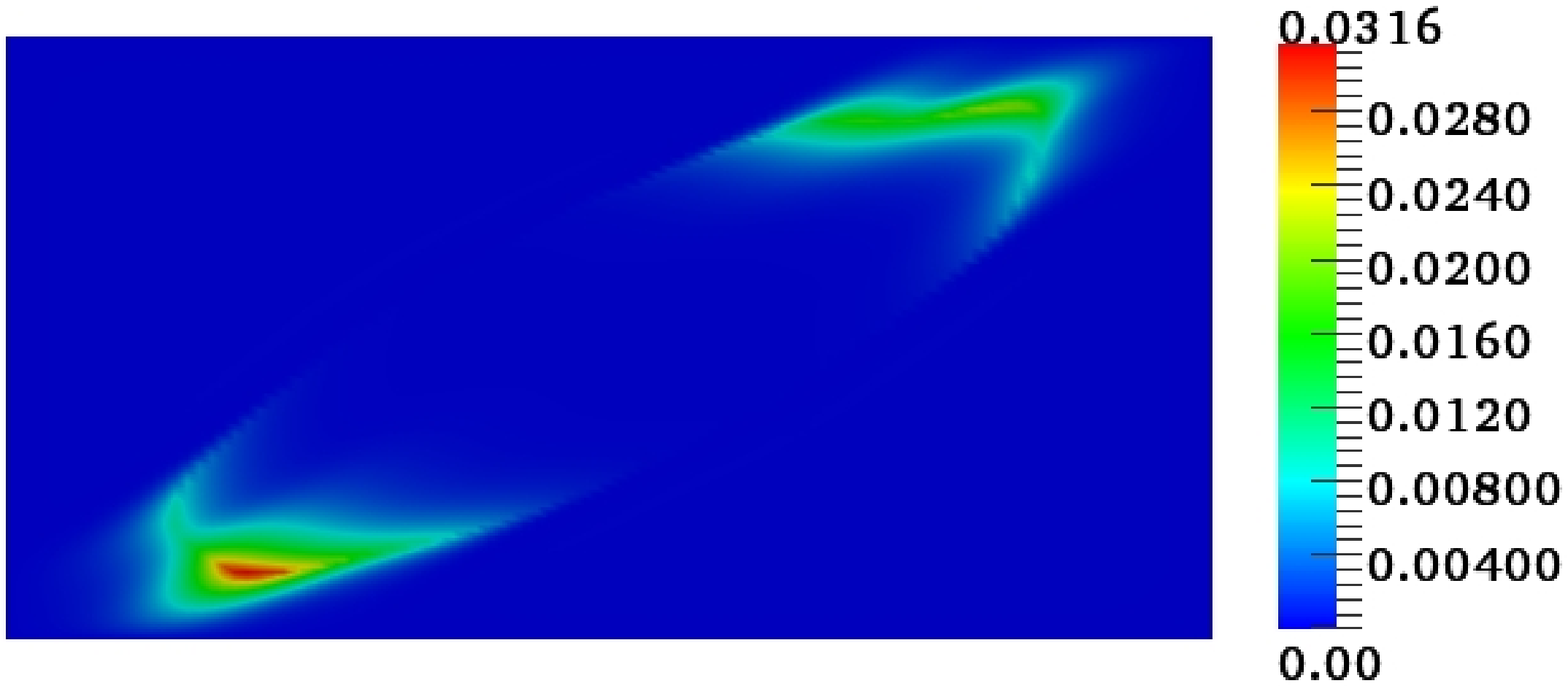}}
  \subfigure[$t = 1.0$]{\includegraphics[scale=0.35]
    {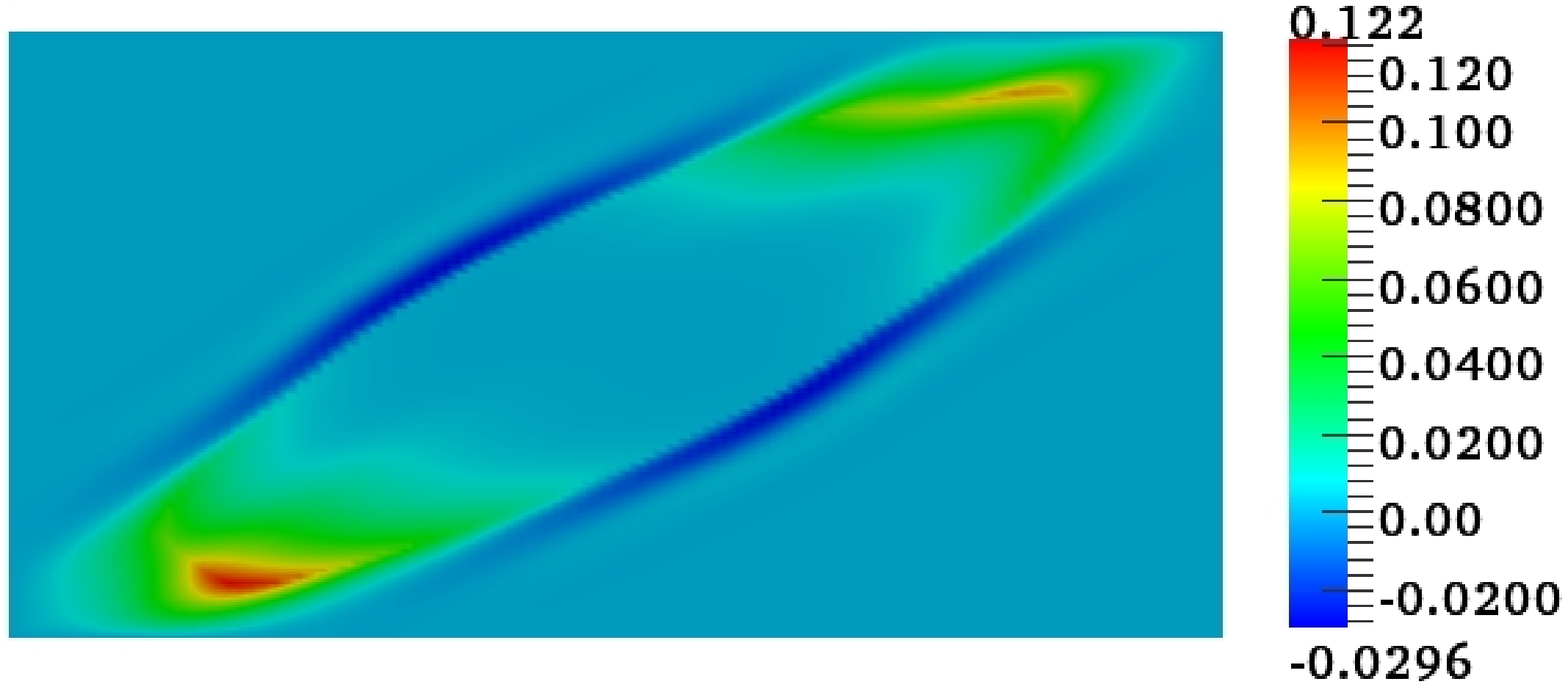}}
  \subfigure[$t = 1.0$]{\includegraphics[scale=0.35]
    {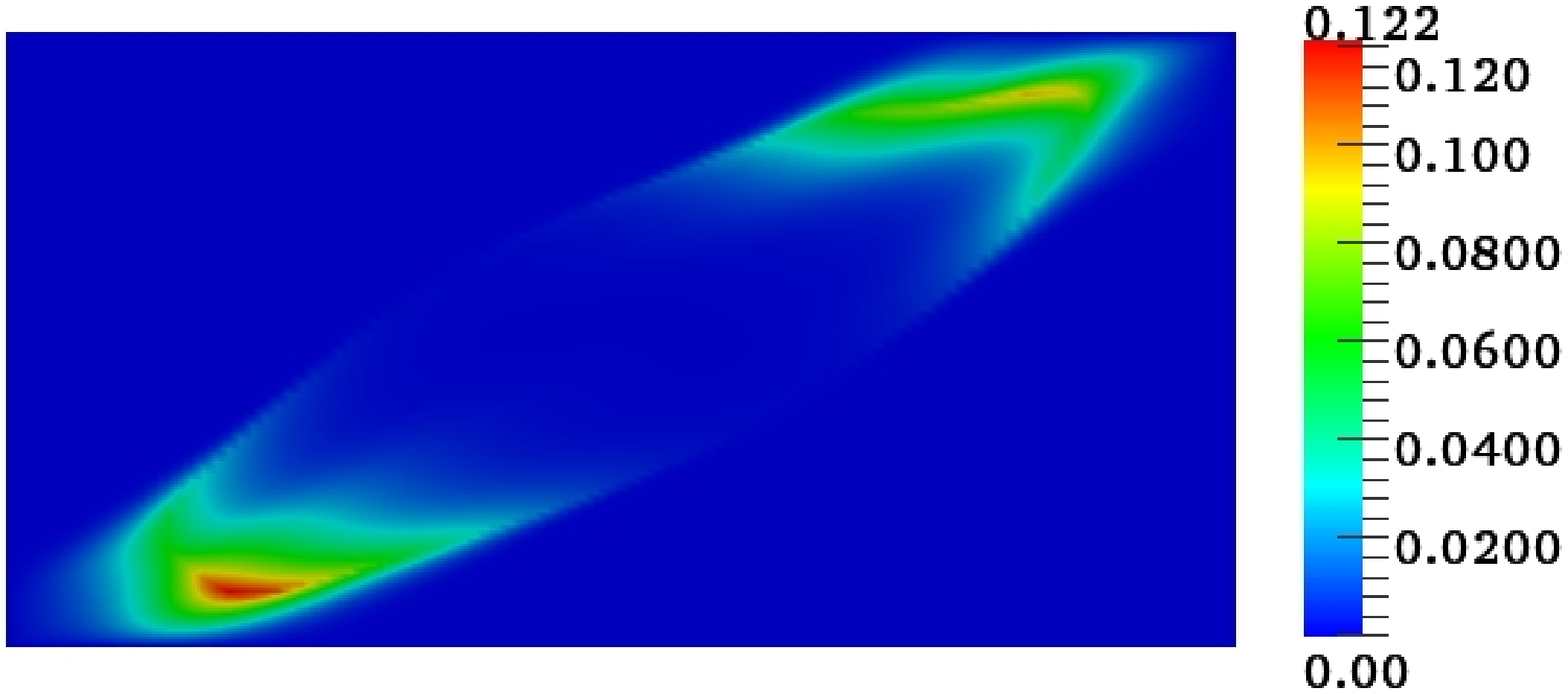}}
  \caption{Diffusion and reaction of an initial slug: This figure 
    shows the contours of the concentration of the product $C$ 
    under the Galerkin single-field formulation (left) and proposed 
    numerical framewotk (right) at various time levels. The time step 
    is taken as $\Delta t = 0.05 \; \mathrm{s}$. It is evident from above 
    figures that the Galerkin single-field formulation violated the non-negative 
    constraint, whereas the proposed numerical framework produced 
    physically meaningful values for concentration of the product $C$.  
    \label{Fig:NN_Slug_C_t_dot05}}
\end{figure}

\begin{figure}
  \centering
  \subfigure[$t = 1.0$]{\includegraphics[scale=0.35]
    {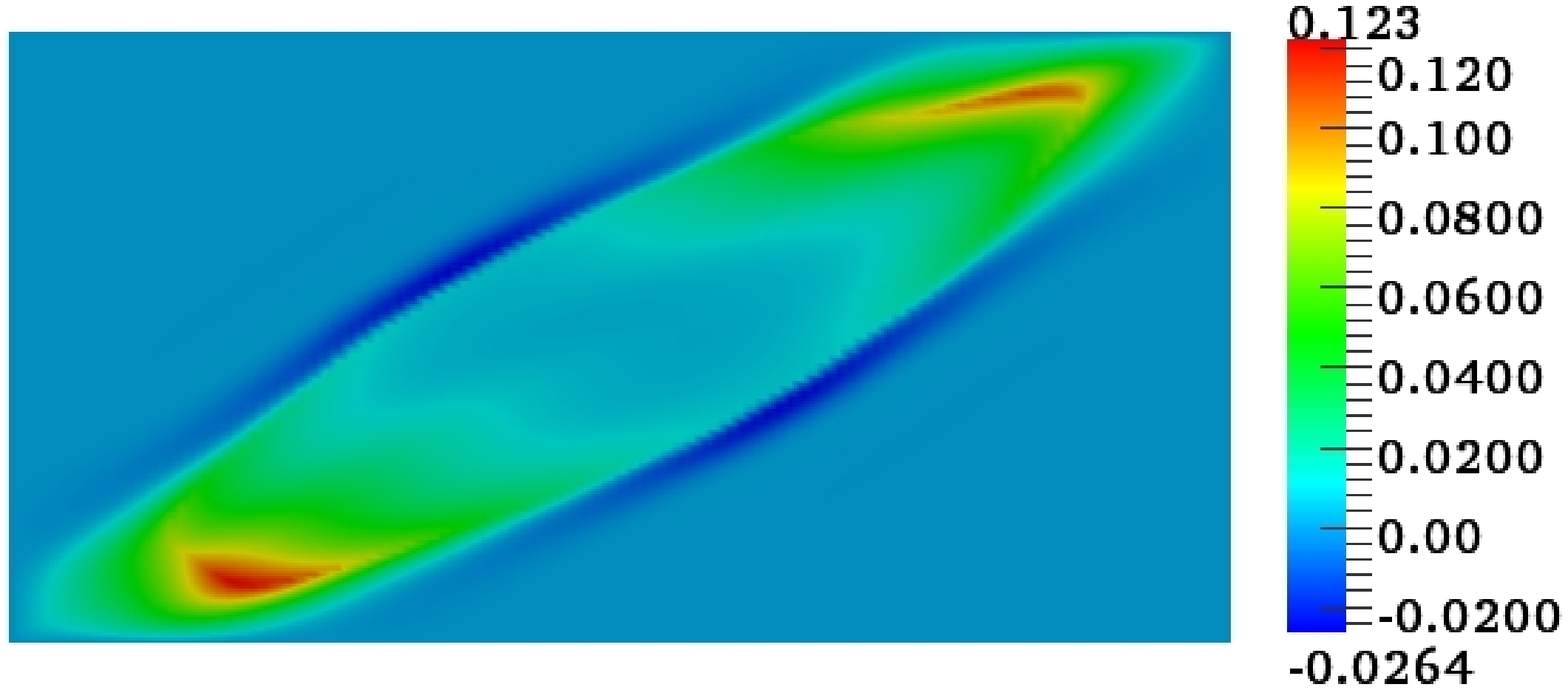}}
  \subfigure[$t = 1.0$]{\includegraphics[scale=0.35]
    {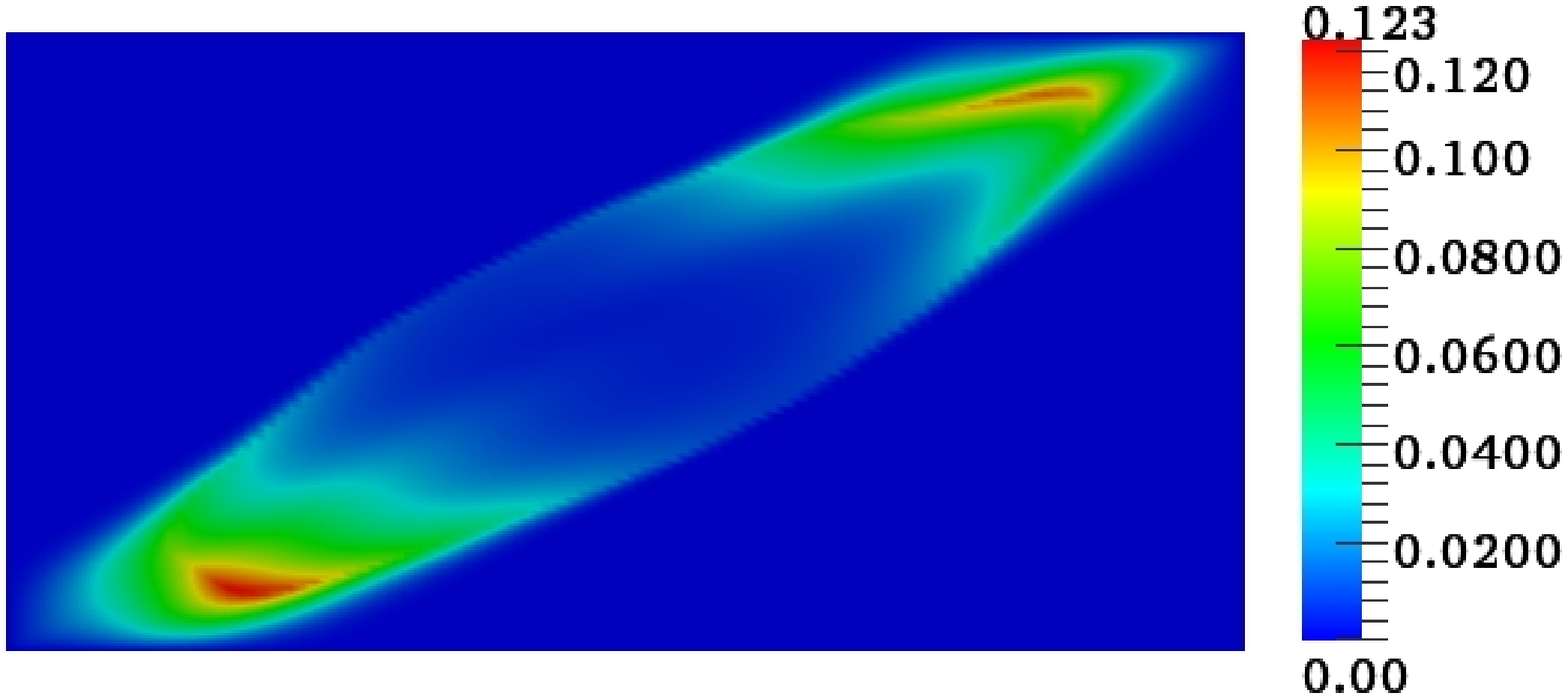}}
  \subfigure[$t = 5.0$]{\includegraphics[scale=0.35]
    {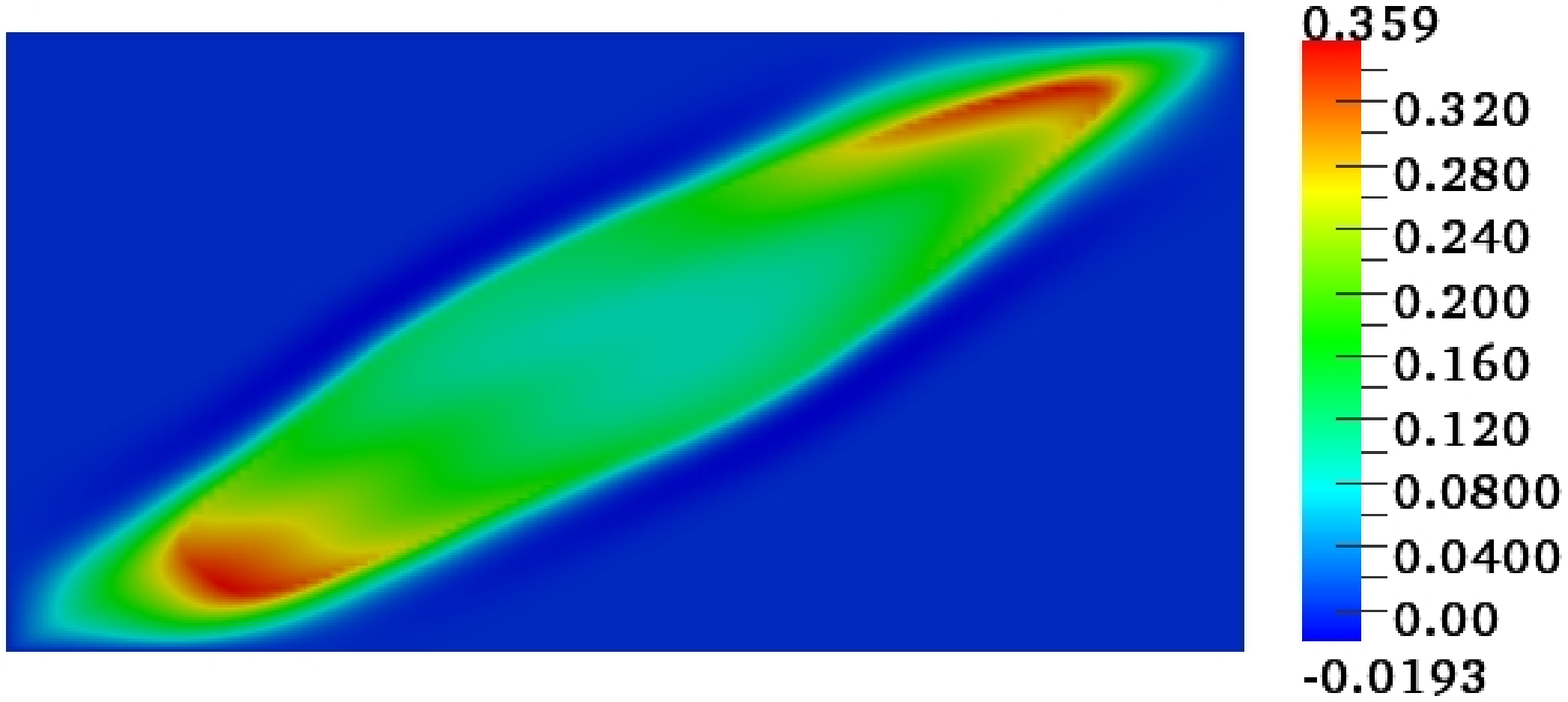}}
  \subfigure[$t = 5.0$]{\includegraphics[scale=0.35]
    {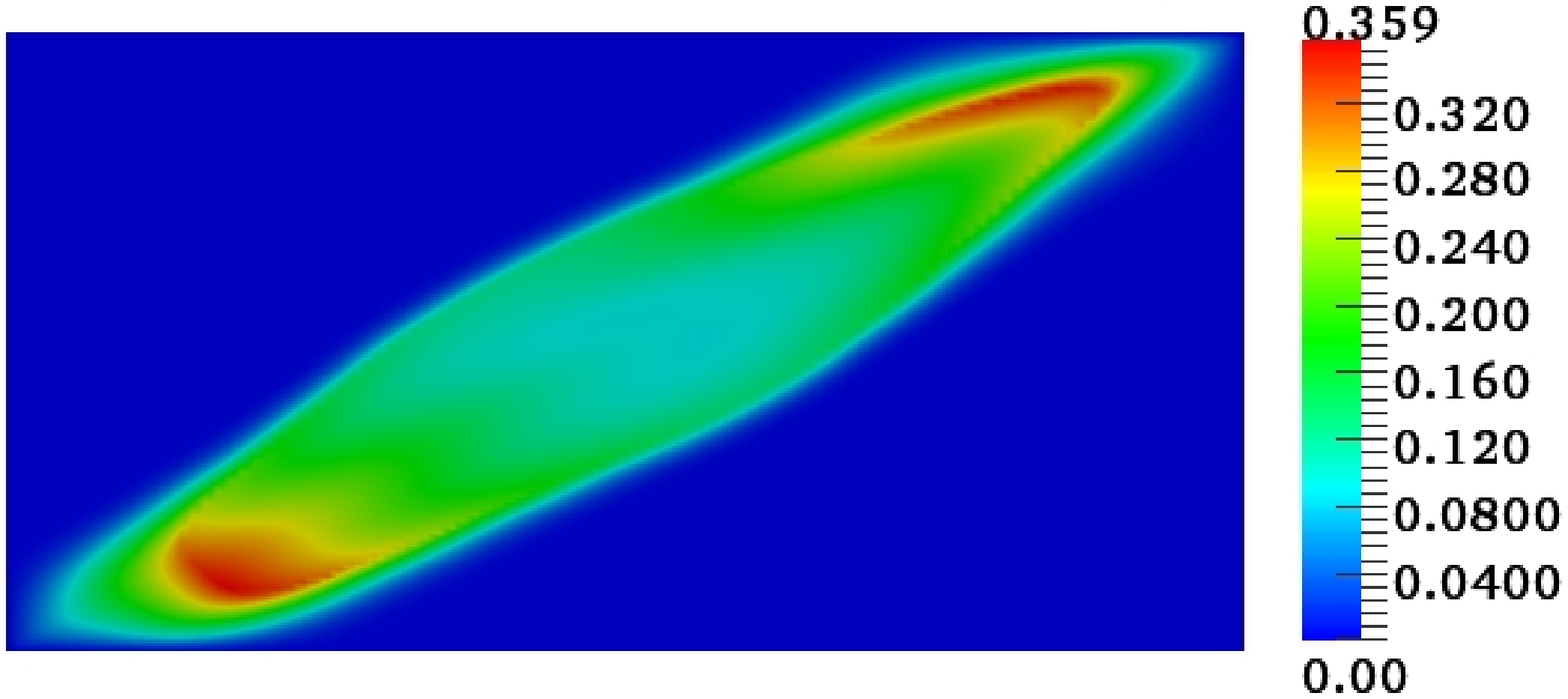}}
  \subfigure[$t = 10.0$]{\includegraphics[scale=0.35]
    {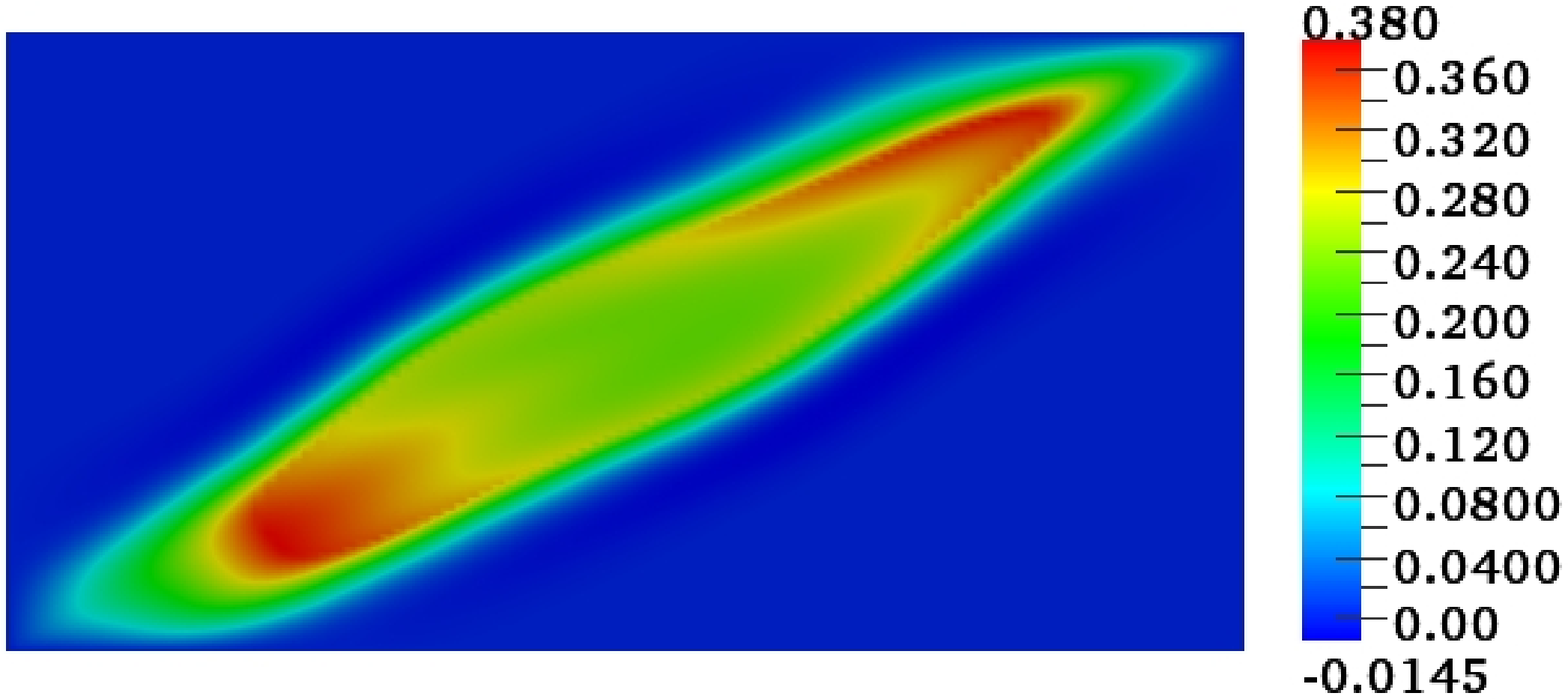}}
  \subfigure[$t = 10.0$]{\includegraphics[scale=0.35]
    {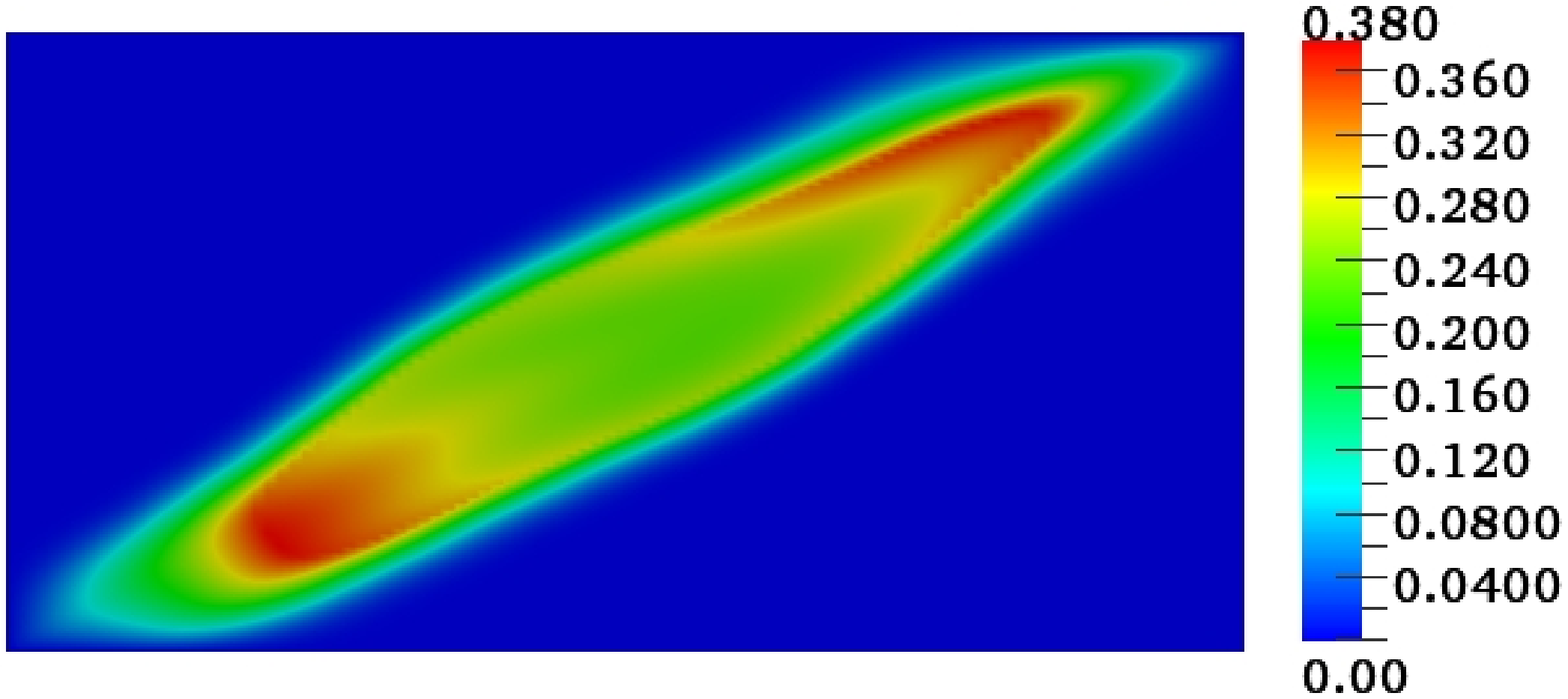}}
  \subfigure[$t = 20.0$]{\includegraphics[scale=0.35]
    {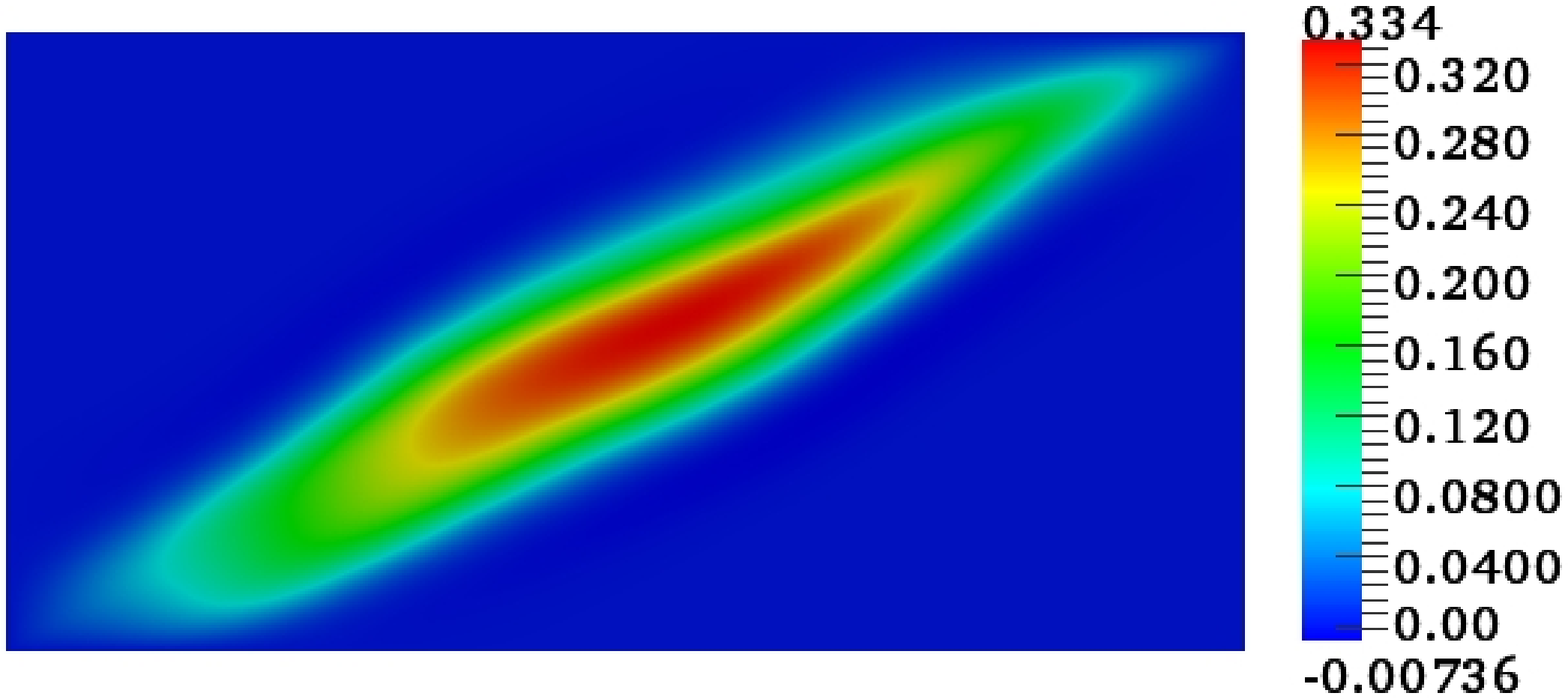}}
  \subfigure[$t = 20.0$]{\includegraphics[scale=0.35]
    {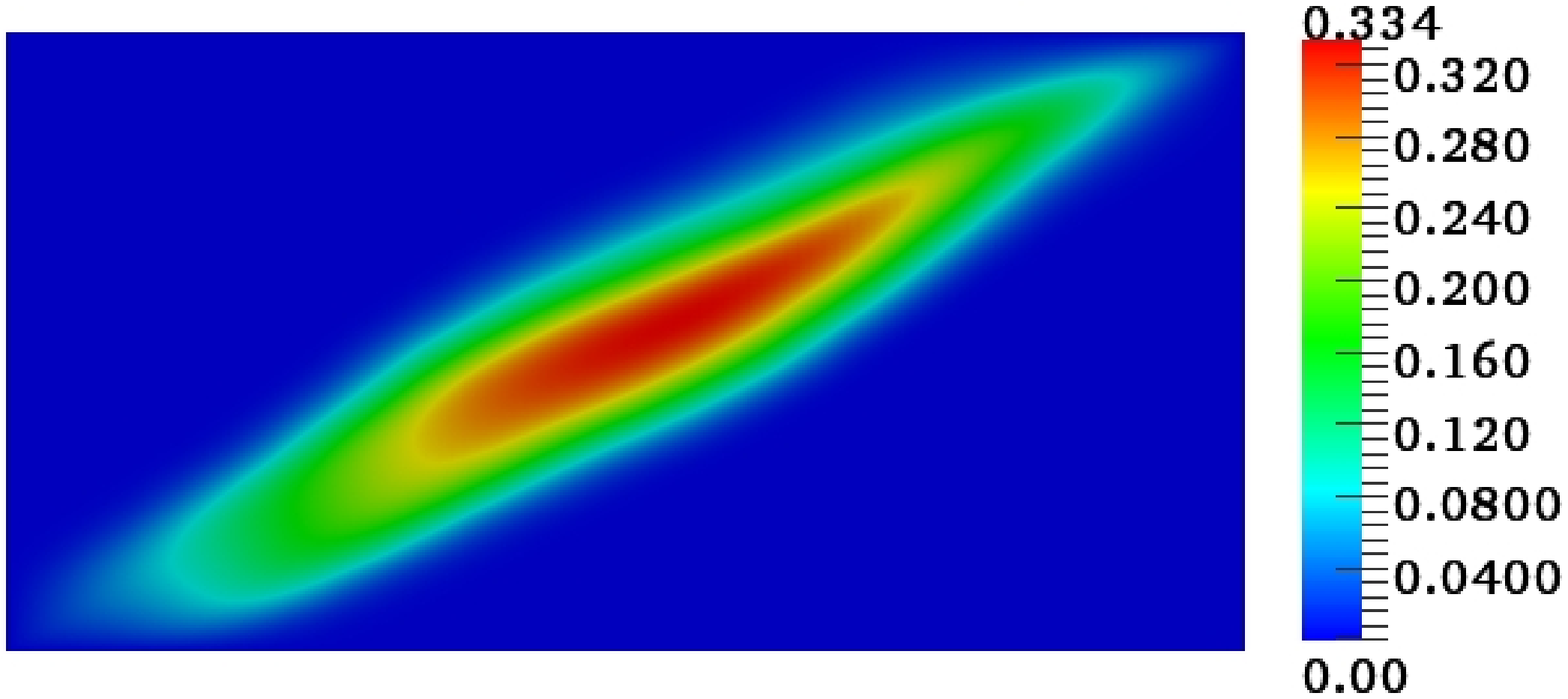}}
 \caption{Diffusion and reaction of an initial slug: This figure 
   shows the contours of the concentration of the product $C$ 
   under the Galerkin single-field formulation (left) and the 
   proposed numerical framework (right) at various time levels. 
   A larger time step of $\Delta t = 1.0 \; \mathrm{s}$ is employed. 
   Again, the Galerkin single-field formulation violated the 
   non-negative constraint, and the proposed numerical framework 
   produced physically meaningful solutions.  \label{Fig:NN_Slug_C_t_1}}
\end{figure}

\begin{figure}
  \centering
  \includegraphics[clip,scale=0.4,angle=-90]  
    {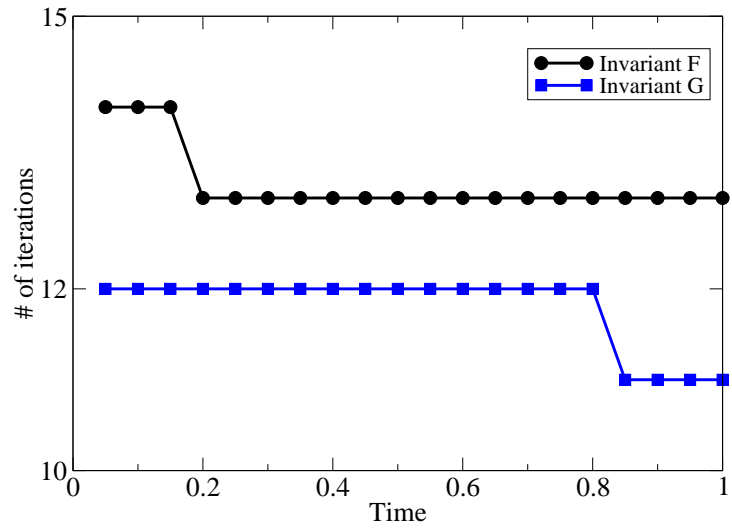}
  \caption{Diffusion and reaction of an initial slug: 
    This figure shows the number of iterations taken by 
    the quadratic programming solver at each time level 
    in obtaining the concentrations for the invariants $F$ 
    and $G$. The time step is taken as $\Delta t = 0.05 \; 
    \mathrm{s}$. \label{Fig:NN_Slug_QP_iterations}}
\end{figure}

\end{document}